\newcommand{\FF}{{\mathbb{F}}}
\newcommand{\fS}{{\mathfrak{S}}}
\newcommand{\Char}{{\operatorname{char}}}
\newcommand{\Aut}{{\operatorname{Aut}}}
\newcommand{\GL}{{\operatorname{GL}}}
\newcommand{\PGL}{{\operatorname{PGL}}}
\newcommand{\SL}{{\operatorname{SL}}}
\newcommand{\Sp}{{\operatorname{Sp}}}
\newcommand{\SO}{{\operatorname{SO}}}
\newcommand{\GO}{{\operatorname{GO}}}
\let\al=\alpha
\let\sg=\sigma
\newtheorem{thm}{Theorem}[section]
\newtheorem{lem}[thm]{Lemma}
\newtheorem{cor}[thm]{Corollary}
\newtheorem{prop}[thm]{Proposition}
\theoremstyle{definition}
\newtheorem{exmp}[thm]{Example}
\theoremstyle{remark}
\begin{document}

\title[Products and commutators]{Products and commutators\\ of classes
  in algebraic groups}

\date{\today}

\author{Robert Guralnick}
\address{Department of Mathematics, University of
  Southern California, Los Angeles, CA 90089-2532, USA}
\makeatletter\email{guralnic@usc.edu}\makeatother
\author{Gunter Malle}
\address{FB Mathematik, TU Kaiserslautern, Postfach 3049,
  67653 Kaisers\-lautern, Germany}
  \makeatletter\email{malle@mathematik.uni-kl.de}\makeatother

\thanks{The first author was partially supported by the NSF
  grant DMS-1001962 and the Simons Foundation Fellowship 224965.
  The second author gratefully acknowledges financial support by ERC
  Advanced Grant 291512.}

\keywords{conjugacy classes, commutators, algebraic groups}

\subjclass[2010]{Primary 20G07; Secondary  20E32, 20E45, 20F12}

\dedicatory{Dedicated to the memory of Tonny Springer}

\begin{abstract}
We classify pairs of conjugacy classes in almost simple algebraic groups
whose product consists of finitely many classes. This leads to several
interesting families of examples which are related to a generalization of
the Baer--Suzuki theorem for finite groups. We also answer a question of
Pavel Shumyatsky on commutators of pairs of conjugacy classes in simple
algebraic groups. It turns out that the resulting examples are exactly those
for which the product also consists of only finitely many classes.
\end{abstract}

\maketitle


\section{Introduction}  \label{sec:intro}

The Baer--Suzuki theorem asserts that in a finite group $G$, if $x \in G$ is
such that $\langle x, x^g \rangle$ is a $p$-group for all $g\in G$, then
$\langle x^G \rangle$ is a normal $p$-subgroup of $G$.

We were recently informed by Bernd Fischer that Reinhold Baer had asked what
one can say if, given $x, y\in G$, we have that $\langle x, y^g \rangle$ is a
$p$-group for all $g \in G$.  Examples in Guralnick--Malle--Tiep \cite{GMT}
show that there is not too much to say in general.   However, 
with some extra hypothesis \cite{GMT, GMBaer}, there are generalizations
along these lines.  

Here we consider the corresponding question for almost simple algebraic groups.
Our main
result is the following characterization, part of which is an analogue
of the corresponding result in the connected case \cite[Cor.~5.14]{GMT}:

\begin{thm}   \label{thm:main}
 Let $G$ be an almost simple algebraic group over an algebraically closed
 field, and let $C,D$ be $G^{\circ}$-classes.
 The following are equivalent:
 \begin{enumerate}[\rm(i)]
  \item $CD$ is a finite union of $G^\circ$-conjugacy classes.
  \item $C_{G^\circ}(x_1)\backslash G^\circ /C_{G^\circ}(x_2)$ is finite for all
   $(x_1,x_2)\in C\times D$.
  \item $\langle x_1,x_2 \rangle$ normalizes some Borel subgroup of
   $G^\circ$ for every $(x_1,x_2)\in C\times D$.
 \end{enumerate}
 Moreover, any of the above conditions implies:
 \begin{enumerate}[\rm(iv)]
  \item $[C,D]:=\{[x,y] \mid (x,y) \in C\times D\}$ is a finite union of
   $G^\circ$-conjugacy classes.
 \end{enumerate}
\end{thm}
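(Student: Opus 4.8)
The plan is to prove the cycle $\mathrm{(ii)}\Rightarrow\mathrm{(i)}\Rightarrow\mathrm{(iii)}\Rightarrow\mathrm{(ii)}$, giving the equivalence of $\mathrm{(i)}$--$\mathrm{(iii)}$, and separately $\mathrm{(ii)}\Rightarrow\mathrm{(iv)}$. Two of these implications are purely formal, and I would dispose of them first. Fix $x_1\in C$, $x_2\in D$, and identify $C\cong G^{\circ}/C_{G^{\circ}}(x_1)$, $D\cong G^{\circ}/C_{G^{\circ}}(x_2)$; then the orbits of $G^{\circ}$ on $C\times D$ under the diagonal conjugation action correspond bijectively to the double cosets $C_{G^{\circ}}(x_1)\backslash G^{\circ}/C_{G^{\circ}}(x_2)$, and the multiplication map $\mu\colon C\times D\to G$, $(a,b)\mapsto ab$, together with the commutator map $\kappa\colon C\times D\to G$, $(a,b)\mapsto[a,b]$, are equivariant for these conjugation actions, with images $CD$ and $[C,D]$ respectively. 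Hence if $\mathrm{(ii)}$ holds then $C\times D$ is a finite union of $G^{\circ}$-orbits, so each of $CD=\mu(C\times D)$ and $[C,D]=\kappa(C\times D)$ is a finite union of $G^{\circ}$-classes; this yields $\mathrm{(ii)}\Rightarrow\mathrm{(i)}$ and $\mathrm{(ii)}\Rightarrow\mathrm{(iv)}$. Once the equivalence of $\mathrm{(i)}$--$\mathrm{(iii)}$ is established, it follows that any one of them forces $\mathrm{(iv)}$, which is the ``Moreover'' assertion (and one half of Shumyatsky's question).

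For $\mathrm{(i)}\Rightarrow\mathrm{(iii)}$ and $\mathrm{(iii)}\Rightarrow\mathrm{(ii)}$ I would reduce to the connected group $G^{\circ}$, where the corresponding equivalence is \cite[Cor.~5.14]{GMT}. The reduction rests on Steinberg's theory of (possibly) disconnected reductive groups. Writing $x_i=s_iu_i$ with $s_i$ quasi-semisimple and $u_i$ unipotent commuting with $s_i$, the centraliser $M_i:=C_{G^{\circ}}(s_i)^{\circ}$ is connected reductive, a maximal torus $T_0$ and a Borel $B_0\supseteq T_0$ of $G^{\circ}$ can be chosen $s_i$-stable with $B_0\cap M_i$ a Borel of $M_i$ and $u_i\in M_i$, and $C_{G^{\circ}}(x_i)=C_{G^{\circ}}(s_i)\cap C_{G^{\circ}}(u_i)$ has identity component $C_{M_i}(u_i)^{\circ}$. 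Since $x_i$ normalises the Borel $B_0$ of $G^{\circ}$ as soon as $u_i\in B_0$, condition $\mathrm{(iii)}$ --- that $\langle x_1,x_2\rangle$ fixes a point of the flag variety of $G^{\circ}$ --- can be analysed through the rigidity of the $s_i$ (the semisimple part is canonically determined and its centraliser is reductive) together with the relative position of the $u_i$ inside the $M_i$; this is the point at which the connected statement is invoked. The idea is that each of the three implications is thereby pushed down to $G^{\circ}$, at the cost of the bookkeeping that comes from working with $G^{\circ}$-classes of elements in outer cosets and with $N_G(B_0)$ in place of Borel subgroups of $G^{\circ}$.

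I expect this descent to be the main obstacle. On the technical side, when $s_1\notin G^{\circ}$ the element $x_1$ is contained in no proper connected reductive subgroup of $G$, so the reduction is not a naive restriction to a Levi or to a centraliser: the component groups of the $C_{G^{\circ}}(s_i)$, and the interaction of the two cosets in which $x_1$ and $x_2$ lie, have to be controlled throughout, and a dimension count in the spirit of \cite{GMT} enters, just as it does already in the connected case when one deduces the common-Borel condition from the finiteness of $CD$. More fundamentally, $\mathrm{(iii)}$ is an unusually strong hypothesis --- it is imposed on \emph{every} pair in $C\times D$ --- and already in the connected case it forces $C$ and $D$ to be very small; in the disconnected case new configurations appear, involving quasi-semisimple elements of outer cosets with sufficiently large reductive centralisers. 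These are the ``interesting families'' of the abstract, and the heart of the proof is to enumerate them and to verify $\mathrm{(i)}$ --- hence, by the formal argument above, also $\mathrm{(ii)}$ and $\mathrm{(iv)}$ --- on the resulting list. Once that classification is carried out, the cycle of implications closes and the ``Moreover'' statement is immediate.
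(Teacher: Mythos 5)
The formal half of your plan is right, and it is essentially how the paper handles the easy directions: $G^\circ$-orbits on $C\times D$ under diagonal conjugation correspond to the double cosets in (ii), and since $(a,b)\mapsto ab$ and $(a,b)\mapsto [a,b]$ are equivariant for this action, (ii) does imply both (i) and (iv) (the paper phrases (ii)$\Rightarrow$(i) instead via density of $C_{G^\circ}(x_1)C_{G^\circ}(x_2)$, \cite[Lemma~5.3]{GMT} and the uniqueness of the closed quasi-semisimple class in $\overline{CD}$, Corollary~\ref{cor:unique}; your orbit argument is an acceptable, even more direct, substitute, and it also gives (iv) more cheaply than the route through Theorem~\ref{thm:commut}).

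The gap is in the remaining arrows, and it is twofold. First, your closing claim that verifying (i) on the enumerated list yields (ii) ``by the formal argument above'' is backwards: the formal argument is (ii)$\Rightarrow$(i), and there is no formal converse --- finiteness of the set of classes in $CD$ does not bound the number of $G^\circ$-orbits on $C\times D$. In the paper, (i)$\Rightarrow$(ii) and (i)$\Rightarrow$(iii) are obtained only after the classification of all pairs satisfying (i) (Theorem~\ref{thm:mainB}), and then by separate, case-by-case orbit computations for each entry: Example~\ref{ex:graph-trans} (two orbits for transvection/graph involution in $A_{2n-1}(k).2$), Example~\ref{exmp:GO} (a single orbit), Propositions~\ref{prop:orb1} and~\ref{prop:orb2} (orbits of $C_G(x)$ on nondegenerate $1$-spaces; five $G_2(k)$-orbits on long root elements of $D_4$ and two $F_4(k)$-orbits on those of $E_6$), Propositions~\ref{lem:SPcent} and~\ref{lem:SOcent}, and Lemma~\ref{lem:D4.S3} for $D_4(k).\fS_3$. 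None of this follows from (i); it is genuinely additional work that your plan omits. Second, the proposed descent of (i)$\Rightarrow$(iii) and (iii)$\Rightarrow$(ii) to $G^\circ$ via the quasi-semisimple Jordan decomposition and \cite[Cor.~5.14]{GMT} has no workable mechanism: when one of the classes is outer, the pair $(x_1,x_2)$ lies in no common connected reductive subgroup, and $C_{G^\circ}(s_1)$, $C_{G^\circ}(s_2)$ are unrelated subgroups (e.g.\ $F_4(k)$ and a Levi of type $A_5$ in $E_6(k).2$), so the connected-case equivalence cannot simply be invoked fibrewise. What the paper actually does is prove (iii)$\Rightarrow$(i) directly (Lemma~\ref{lem:inBorel}: if one class is inner, conjugates inside a fixed Borel have only finitely many semisimple parts and \cite[Lemma~5.1]{GMT} applies; if both are outer, one reduces to $A_1(k)^2$, $A_2(k)$, $A_1(k)^3$ and the $D_4(k).\fS_3$ analysis of Proposition~\ref{prop:D4.S3}), and then close the circle by deducing (ii) and (iii) from (i) through the classification plus the orbit counts above. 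Without importing both of these ingredients your cycle (ii)$\Rightarrow$(i)$\Rightarrow$(iii)$\Rightarrow$(ii) does not close.
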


Here, an \emph{almost simple algebraic group} is a possibly disconnected
linear algebraic group whose connected component $G^{\circ}$ is simple
and such that $C_G(G^\circ)=Z(G^\circ)$. See Theorem~\ref{thm:mainlong} for
further equivalent conditions.
Note that the result holds for $G$-classes as well as long as we avoid
the case $G=D_4.\fS_3$ (because in the disconnected cases one of the classes
is outer and so the centralizer intersects all cosets except in the excluded
case). 

While some implications in this statement have general proofs, others rely on
our explicit classification of all pairs of classes with the stated
properties.

Thus, we first classify in Theorem~\ref{thm:mainA} all pairs $C, D$ of
unipotent classes in (disconnected)
almost simple algebraic groups such that every pair in $C \times D$ generates
a unipotent group (the connected case was done in \cite{GMT}).  In fact,
it turns out to be equivalent to the condition that $CD$ consists of unipotent
elements.

We also extend the result of \cite[Thm.~1.1]{GMT} to the disconnected case by
classifying pairs $C, D$ of conjugacy classes in (disconnected) almost simple
groups such that $CD$ is a finite union of conjugacy classes.

It should be noted that for connected groups examples occur if and only if
the group is of non-simply laced type, while for disconnected groups examples
exist if and (clearly) only if the type is simply laced. See
Proposition~\ref{prop:rel} for some relation between these two types of
examples. Let's point out the following consequence of our classification:
whenever $CD$ is a finite union of conjugacy classes, then
at least one of $C,D$ contains a quasi-central class in its closure
(recall that a quasi-central class is the smallest conjugacy in a given
coset of $G^\circ$). It would be nice to have an a priori proof of this.
\medskip

Two corollaries of the results and proofs are (extending the results of
\cite{GMT} to the disconnected case): 

\begin{cor}   \label{cor:finite}
 Let $G$ be an almost simple algebraic group over an algebraically closed
 field.   Suppose that $C$ and $D$ are conjugacy classes of $G$.  Then
 either $CD$ is the union of at most $3$ conjugacy classes or $CD$ contains
 infinitely many conjugacy classes. 
\end{cor}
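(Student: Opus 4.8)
The plan is to reduce the statement to the explicit classification of pairs of classes with finite product, and then to a short direct computation in each case that arises. If $CD$ contains infinitely many conjugacy classes we are in the second alternative and there is nothing to prove, so assume that $CD$ is a finite union of $G$-classes; equivalently, since $[G:G^\circ]<\infty$, a finite union of $G^\circ$-classes. Then conditions (i)--(iii) of Theorem~\ref{thm:main} hold and, more to the point, the pair $(C,D)$ occurs in the classification underlying that theorem --- the connected case being \cite{GMT}, the disconnected case the extension of \cite[Thm.~1.1]{GMT} carried out here together with Theorem~\ref{thm:mainA} for pairs of unipotent classes. It therefore suffices to run through this finite list and, in each case, bound by $3$ the number of conjugacy classes of $G$ contained in $CD$.

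Before the case-check I would pass to a single irreducible model for $CD$. If $G$ is connected then $C$ and $D$ are $G^\circ$-classes, so $CD$ is the image of the irreducible variety $C\times D$ under multiplication and is itself irreducible. If $G$ is disconnected then, in the cases arising in the classification, one of the two classes --- say $D$ --- is outer and, unless $G=D_4.\fS_3$ (which I would handle separately by a direct inspection of that single group), the centralizer of an element of $D$ meets every coset of $G^\circ$, as noted after Theorem~\ref{thm:main}, so $D$ is a single $G^\circ$-class; writing $C=\bigcup_i C_i$ as a union of $G^\circ$-classes (permuted transitively by $G$), we have $C_iD=g_i(C_1D)g_i^{-1}$ whenever $C_i=g_iC_1g_i^{-1}$, so every $G$-class occurring in $CD=\bigcup_iC_iD$ already meets the irreducible set $C_1D$. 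In all cases it is thus enough to bound by $3$ the number of $G^\circ$-classes contained in one fixed irreducible constructible set $X$ (namely $X=CD$, resp.\ $X=C_1D$). Being irreducible, $X$ contains a unique class $E$ of maximal dimension, lies in $\overline E$, and has all of its remaining classes in the boundary $\overline E\setminus E$.

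It then remains, for each pair on the list, to pin down the dense class $E$ of $X$ and to check that $X$ meets $\overline E\setminus E$ in at most two further classes. For connected $G$ this is \cite{GMT}: the examples there involve long- and short-root element classes inside a subsystem subgroup of rank at most $2$ of a non-simply-laced group, and a short direct computation shows that $X$ equals $E$, $E\cup\{1\}$, or a union of three classes --- so the bound is attained. For disconnected $G$, the group $G^\circ$ is of simply-laced type $A_n$, $D_n$ or $E_6$, the coset of $D$ is outer, and the closure of $D$ (or of $C$) contains a quasi-central class $Q$; using condition (iii) together with the structure of $C_{G^\circ}(Q)$ (the fixed points of a graph automorphism when $Q$ is outer, a reductive group of folded type), one confines every pair $(x_1,x_2)\in C\times D$ to a common $G$-stable subgroup of bounded rank --- essentially a classical group of small rank, or $\PGL_3$, extended by a graph automorphism --- in which $X$ can be written down explicitly from the Bruhat decomposition and the root-subgroup commutator relations. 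Each of the finitely many configurations so obtained again yields at most three classes.

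The main obstacle is exactly this last step. The classifications decide \emph{whether} $CD$ is a finite union of classes, not \emph{how many}, so in every extremal family one has to actually compute $X$ and, in particular, rule out any configuration producing four or more finite classes. The delicate cases are those in which the value $3$ is attained --- where $X$ genuinely consists of a top class, one intermediate class and a central or quasi-central class --- and one must check that the intermediate layer does not split into two distinct $G^\circ$-classes; this amounts to keeping careful track of how the disconnecting automorphism acts on the relevant root subgroups, and is the only point at which the argument is not purely formal.
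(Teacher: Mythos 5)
There is a genuine gap: the quantitative heart of the statement --- that in every case of the classification the finite union $CD$ consists of at most $3$ classes --- is never actually established. You reduce correctly to the classified list and to an irreducible model of $CD$, but the case-by-case bound is only asserted (``a short direct computation shows'', ``each of the finitely many configurations \dots\ again yields at most three classes''), and you yourself flag this step as the main obstacle; that step \emph{is} the proof of the corollary. Moreover, your appeal to \cite{GMT} for connected $G$ is insufficient as stated: in two of the connected families (the class of involutions $x\in\Sp_{2n}(k)$, $p=2$, with $(xv,v)=0$, and the class of unipotent elements of $\SO_n(k)$, $p\ne2$, with all Jordan blocks of size at most $2$, each multiplied by a root-element class) \cite{GMT} only gives a bound of $4$, and the paper must prove two new orbit-count results (Propositions~\ref{lem:SPcent} and~\ref{lem:SOcent}: the relevant centralizer has at most $3$ orbits on nonzero vectors, resp.\ on nondegenerate $1$-spaces) to bring this down to $3$. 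Similarly, the case $G=D_4(k).\fS_3$, which you defer to ``direct inspection,'' is exactly where the hardest counting occurs: the paper needs Lemma~\ref{lem:D4.S3}, whose proof uses explicit orbit computations, point counts over $\FF_q$ and Lang's theorem, to verify that at most $3$ orbits arise in each surviving configuration.

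Note also that the paper's mechanism differs from yours. Rather than analysing the boundary $\overline{E}\setminus E$ of the dense class of the irreducible set $CD$, it bounds the number of classes in $CD$ by the number of orbits of $G$ on $C\times D$ under simultaneous conjugation (equivalently, by the double-coset count $C_{G^\circ}(x_1)\backslash G^\circ/C_{G^\circ}(x_2)$ of Theorem~\ref{thm:mainlong}), and then computes this number explicitly in each case of the classification (Examples~\ref{ex:graph-trans}, \ref{exmp:graph-refl}, \ref{exmp:GO}, Propositions~\ref{prop:orb1}, \ref{prop:orb2}, \ref{lem:SPcent}, \ref{lem:SOcent}, Lemma~\ref{lem:D4.S3}). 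Your closure-based route could in principle work, but it would additionally require controlling how the boundary classes decompose into $G^\circ$-classes --- precisely the point you acknowledge as delicate --- and as written the proposal supplies neither that control nor any of the explicit counts, so it remains an outline rather than a proof.
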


In the previous  result, if we replace $G$-classes by $G^\circ$-classes, then 
$5$ classes suffice.  

\begin{cor}   \label{cor:gopal}
 Let $G$ be an almost simple algebraic group over an algebraically closed
 field $k$. If $x\in G\setminus Z(G^\circ)$, then
 $\dim (C_G(x)yC_G(x)) < \dim G$ for all $y \in G$,
 unless $G=D_4(k).\fS_3$ and $x$ is an outer involution
 with $C_G(x) = B_3(k)$ and $y$ is not in $\langle G^\circ, x \rangle$.
 \end{cor}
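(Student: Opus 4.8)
The plan is to translate the statement into a question about factorizations of the connected group $G^\circ$, and then to settle that question case by case using the classification of conjugacy classes and centralizers assembled while proving Theorems~\ref{thm:mainA} and~\ref{thm:main}. First I would reduce. Put $H:=C_G(x)^\circ\le G^\circ$. Then $C_G(x)\,y\,C_G(x)$ is a finite union of left translates of double cosets $HzH$, and each such can be written $HzH=H\cdot(zHz^{-1})\cdot z=H\cdot C_{G^\circ}(zxz^{-1})^\circ\cdot z$; since $C_G(x)$ normalizes $H$ and fixes $x$, all these terms have equal dimension, whence
\[
 \dim\bigl(C_G(x)\,y\,C_G(x)\bigr)=\dim\bigl(H\cdot C_{G^\circ}(v)^\circ\bigr),\qquad v:=yxy^{-1},
\]
and as $y$ runs over $G$ the element $v$ runs over the whole $G$-class of $x$. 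Using $\dim(AB)=\dim A+\dim B-\dim(A\cap B)$ for closed subgroups of $G^\circ$, together with $H\cap C_{G^\circ}(v)^\circ\supseteq C_{G^\circ}(\langle x,v\rangle)^\circ$, the desired conclusion becomes: for every non-central $x$ and every $G$-conjugate $v$ of $x$ one has $\dim C_{G^\circ}(\langle x,v\rangle)>2\dim C_G(x)-\dim G$; equivalently, no $G$-conjugate of $H$ is a factor of a factorization $G^\circ=H\cdot H^g$ — with a single exception to be matched with the one in the statement.

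Next I would cut this down to a finite list: if $\dim C_G(x)<\tfrac12\dim G$ the bound above is negative and there is nothing to prove, so one may assume $\dim C_G(x)\ge\tfrac12\dim G$, and feeding in the centralizer dimensions for inner and outer $x$ already tabulated for Theorems~\ref{thm:mainA} and~\ref{thm:main} leaves only the inner and outer involution classes together with finitely many unipotent or mixed classes in small rank (root element classes and a handful more). Then comes the case analysis. For $x$ inner semisimple, $H$ is a maximal-rank reductive subgroup, so $G^\circ=H\cdot H^g$ would be a factorization by two conjugate subsystem subgroups, which the classification of factorizations of simple algebraic groups by reductive subgroups forbids. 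For $x$ outer, $H=C_{G^\circ}(x_s)^\circ$ is the (reductive, possibly non-maximal-rank) fixed-point group of a graph automorphism, and the same classification shows that the only factorization $G^\circ=H\cdot H^g$ of this shape is $D_4=B_3\cdot B_3$ (equivalently $\Omega_8^{+}=\Omega_7\cdot\Omega_7$), realized by a vector stabilizer together with a triality image of it; tracing this back forces $x$ to be a graph involution of $D_4.\fS_3$ with connected centralizer $B_3(k)$, and forces $C_{G^\circ}(v)^\circ$ to be the triality translate of $C_{G^\circ}(x)^\circ$, which happens exactly when $v=yxy^{-1}$ with $\bar y$ not centralizing $\bar x$ in $G/G^\circ\cong\fS_3$, i.e.\ exactly when $y\notin\langle G^\circ,x\rangle$.

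Finally, for $x$ with nontrivial unipotent part, $H$ has a nontrivial unipotent radical and again lies on the short list; here I would argue directly from the explicit shape of $H$ and of $\langle x,v\rangle$ — two root elements generate a subgroup of bounded dimension, of one of a few explicit types — that $\dim C_{G^\circ}(\langle x,v\rangle)$, minimized over the $G$-conjugates $v$ of $x$ by taking $v$ in general position (so the subspace/flag data attached to $x$ and to $v$ are as transverse as possible), still exceeds $2\dim C_G(x)-\dim G$. In these cases the product $H\cdot H^g$ typically falls exactly one dimension short of $\dim G$, which already suffices.

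The hard part will be the last two cases. In the outer semisimple case one must not only locate $\Omega_8^{+}=\Omega_7\cdot\Omega_7$ among the known factorizations but also establish the ``only'' direction — that it is realized by a graph \emph{involution} and not by an inner involution of $D_4$, nor on the coset $xG^\circ$, nor in any other type, and only for $y\notin\langle G^\circ,x\rangle$. The unipotent and mixed cases are awkward because they call for a uniform transversality estimate across the types rather than a look-up in a table: one has to show that two suitably general conjugates of $H$ still intersect in a subgroup of dimension strictly larger than the ``expected'' value $2\dim H-\dim G$, and this genuinely requires the explicit structure of these small-class centralizers. By contrast, the reduction in the first paragraph and the inner semisimple case are routine.
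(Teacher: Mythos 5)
Your opening reduction is fine: $\dim\bigl(C_G(x)yC_G(x)\bigr)=\dim\bigl(C_{G^\circ}(x)^\circ\,C_{G^\circ}(v)^\circ\bigr)$ with $v=yxy^{-1}$, so the corollary amounts to showing this product is never dense in $G^\circ$ except in the stated $D_4(k).\fS_3$ configuration, and your identification of the exception via $D_4=B_3\cdot B_3^{\tau}$ (triality, hence $y\notin\langle G^\circ,x\rangle$) is correct. But the pivot from there has a genuine gap: full dimension only means the product $H\,H^g$ is \emph{dense} in $G^\circ$, not that $G^\circ=HH^g$, so the problem is not ``equivalently'' the non-existence of an exact factorization, and you cannot settle the semisimple cases by quoting a classification of factorizations of simple algebraic groups by reductive subgroups. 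Dense products of closed subgroups are strictly weaker than exact factorizations (already $BB^-$ is dense and proper), and this distinction is not academic in the present setting: several of the dense double cosets occurring in this paper are proper, e.g.\ there are two $(\Sp_{2n}(k),C_S(x))$-double cosets in Example~\ref{ex:graph-trans} and several in Proposition~\ref{prop:orb2}, so density with more than one double coset really happens for centralizer pairs. Without a dense-to-exact bridge (or a classification of dense products/spherical-type pairs), the inner semisimple and outer quasi-semisimple cases are not actually disposed of by the citation; and for $x$ with nontrivial unipotent part the cited classification does not apply at all, as you acknowledge.

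The second gap is the claimed reduction to ``a finite list'' of classes ``in small rank''. The condition $\dim C_G(x)\ge \dim G/2$ does not produce a finite list: in every rank there are continuous families of semisimple classes satisfying it (e.g.\ all involutions, and more generally all elements with one large eigenspace, in type $A$), many unipotent classes (any element with boundedly many nontrivial Jordan blocks), and mixed classes. So the ``unipotent and mixed'' step, which you flag as a uniform transversality estimate still to be done, is in fact the bulk of the argument, and ``two root elements generate a subgroup of bounded dimension'' does not address a general such $x$. This is exactly the work the paper avoids: it reads the corollary off its own results. If some $C_G(x)yC_G(x)$ had dimension $\dim G$, then $C_{G^\circ}(x)C_{G^\circ}(yxy^{-1})$ would be dense, hence by Theorem~\ref{thm:mainlong} (equivalence of (vi) with (i)) the product of the $G^\circ$-classes of $x$ and $yxy^{-1}$ would be a finite union of classes; scanning Theorems~\ref{thm:mainA}, \ref{thm:mainB} and Proposition~\ref{prop:D4.S3}, no listed pair consists of $G$-conjugate elements except the quasi-central involution classes in distinct order-$2$ cosets of $D_4(k).\fS_3$, which gives precisely the stated exception, and there one really has $G^\circ=C_{G^\circ}(x)C_{G^\circ}(yxy^{-1})$. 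If you want a proof independent of that classification, you must either prove the dense-product analogue of the factorization statement you invoke or carry out the omitted estimates class by class; as written, the proposal does neither.
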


Note that in the excluded case, we really get counter-examples.
This last corollary in the connected case was used by Prasad \cite{gopal} 
in his study of quasi-reductive groups.  

We also answer in the case of algebraic groups a question that Pavel Shumyatsky
had asked in the finite case:

\begin{thm}   \label{thm:mainC}
 Let $G$ be an almost simple algebraic group over an algebraically closed
 field of characteristic $p \ge 0$. Let $C$ be a $G^\circ$-conjugacy class of
 $G$ outside $Z(G^\circ)$. Then $[C,C]$ is the union of infinitely many
 conjugacy classes, where as before,
 $$[C,C]:=\{[x,y] \mid x,y \in C\}.$$
\end{thm}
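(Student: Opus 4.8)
\emph{Proof proposal.} The plan is to show that $[C,C]$ contains a non-unipotent element, and then to deduce that it meets infinitely many $G^\circ$-classes. For the deduction, note that $[C,C]$ is the image of the irreducible variety $C\times C$ under the commutator morphism, hence is irreducible; fix a faithful embedding $G^\circ\hookrightarrow\GL_N$ and let $c\colon G^\circ\to\mathbb A^N$ record the coefficients of the characteristic polynomial, a morphism constant on conjugacy classes with $c^{-1}(c(1))$ equal to the unipotent variety $\mathcal U$ of $G^\circ$. If some element of $[C,C]$ is non-unipotent, then $c([C,C])$ is an irreducible subset of $\mathbb A^N$ with at least two points, hence infinite, so $[C,C]$ meets infinitely many classes. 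Thus it suffices to exhibit $x,y\in C$ with $[x,y]\notin\mathcal U$.

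Fix $x_0\in C$ with Jordan decomposition $x_0=su$, and first suppose $u\ne1$. Put $H:=C_{G^\circ}(s)^\circ$, a connected reductive group containing $u$ and centralised by $s$; a direct computation gives $[x_0,x_0^g]=[u,u^g]$ for all $g\in H$ (the factor $s$ cancels). Writing $u=\prod_j u_j$ over the simple factors $H_j$ of $H$ and choosing $j$ with $u_j\ne1$, one has $[u,u^g]=[u_j,u_j^g]$ for $g\in H_j$; since $x_0,x_0^g\in C$, we are reduced to finding $g\in H_j$ with $[u_j,u_j^g]$ non-unipotent, for the nontrivial unipotent element $u_j$ of the connected simple group $H_j$. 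Now the minimal unipotent class $C_{\min}$ of $H_j$ lies in $\overline{u_j^{H_j}}$, and approximating elements of $C_{\min}$ by conjugates of $u_j$ and using continuity of the commutator map gives $[C_{\min},C_{\min}]\subseteq\overline{[u_j^{H_j},u_j^{H_j}]}$; as $\mathcal U$ is closed it therefore suffices to exhibit a non-unipotent element of $[C_{\min},C_{\min}]$. For a long root $\alpha$ of $H_j$ one has $u_\alpha(1),u_{-\alpha}(t)\in C_{\min}$ for $t\ne0$, and inside the rank-one subgroup $\langle U_\alpha,U_{-\alpha}\rangle$, a quotient of $\SL_2$, a short computation in $\SL_2$ shows $[u_\alpha(1),u_{-\alpha}(t)]$ has trace $2+t^2$, so is non-unipotent for $t\ne0$ in every characteristic.

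It remains to treat $u=1$, so $x_0=s$ is semisimple with $s\notin Z(G^\circ)$. If $s$ is inner we may take $s\in T$ for a maximal torus $T$ and pick a root $\alpha$ with $\alpha(s)\ne1$ (otherwise $s$ centralises $G^\circ$). Conjugation by $s$ restricts on $L:=\langle U_\alpha,U_{-\alpha}\rangle$ to an inner automorphism, say $\operatorname{int}(s_L)$ with $s_L\in T\cap L$ and $\alpha(s_L)=\alpha(s)\ne1$, so $s_L$ is a regular element of $L$; writing $s=s_Lc$ with $c$ centralising $L$ we get $[s,s^g]=[s_L,s_L^g]$ for all $g\in L$. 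By the Fricke identity $\operatorname{tr}[s_L,s_L^g]$ is a nonconstant quadratic polynomial in $\mu:=\operatorname{tr}(s_L\,s_L^g)$, and $\mu$ is a nonconstant function of $g\in L$ (it is $\operatorname{tr}(s_L^2)\ne2$ at $g=1$ and $2$ at a Weyl element, since $s_L$ is regular), so some $[s,s^g]$ has trace $\ne2$ and is non-unipotent. If $s$ is outer, suppose for contradiction $[C,C]\subseteq\mathcal U$, and choose an $\langle s\rangle$-stable maximal torus $T$. For $n\in T$ one computes, with $\sigma:=\operatorname{int}(s)$, that $[s,s^n]=[s,[s,n]]$ and $[s,n]=\sigma^{-1}(n)^{-1}n\in T$; hence $[s,[s,n]]=\sigma^{-1}([s,n])^{-1}[s,n]\in T$, and being unipotent it is trivial, so $[s,n]\in T^{\sigma}$ for all $n\in T$. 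Since $\sigma$ acts on $T$ with finite order it is semisimple on $X_*(T)\otimes\mathbb Q$, so $\operatorname{im}(1-\sigma)\cap\ker(1-\sigma)=0$; combined with the inclusion $\operatorname{im}(1-\sigma)\subseteq\ker(1-\sigma)$ just obtained this forces $\sigma|_T=\operatorname{id}$. But then $\sigma$ fixes every root subgroup setwise and acts on it by a scalar, and the Chevalley relations identify these scalars with $(\alpha(t_0))_\alpha$ for some $t_0$ in the adjoint torus, so $\sigma$ is inner --- contradicting $s\notin G^\circ$.

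I expect the outer semisimple case to be the main obstacle. The obvious move --- conjugating $s$ by a suitable torus element --- is useless when $\sigma$ already fixes a maximal torus, and what makes the argument work is the identity $[s,s^n]=[s,[s,n]]$, which converts ``all $[s,s^n]$ unipotent'' into the linear-algebra condition $\operatorname{im}(1-\sigma)\subseteq\ker(1-\sigma)$ on $X_*(T)$. A secondary thing to watch is that no small-rank or bad-characteristic exceptions intrude (unlike in the $CD$-results there are none here); this is guaranteed because every non-unipotent commutator produced above is detected inside a rank-one subgroup, where the trace computations hold uniformly in the characteristic.
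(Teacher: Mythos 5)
Your overall strategy is the right one and much of it matches the paper: you reduce to producing a single non-unipotent commutator, you handle inner classes by descending to rank-one subgroups (your explicit $\SL_2$ trace computations replace the paper's appeal to Theorem~\ref{thm:gur}, which is a perfectly good and even more self-contained substitute), and your linear-algebra argument for outer \emph{semisimple} elements ($[s,s^n]=[s,[s,n]]$ plus semisimplicity of the finite-order action on $X_*(T)\otimes\mathbb{Q}$) is exactly the content behind the paper's brief final paragraph. The genuine gap is in the disconnected case when the unipotent part of $x_0$ is itself outer. If $x_0$ lies in a coset of order $p\in\{2,3\}$ (graph-involution cosets of $A_n$, $D_n$, $E_6$ in characteristic~$2$, triality cosets of $D_4$ in characteristic~$3$), then in the Jordan decomposition $x_0=su$ one has $s\in G^\circ$ and $u\notin G^\circ$; indeed no element of such a coset is semisimple, so \emph{every} class in such a coset falls outside your case division. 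Your case ``$u\ne1$'' breaks at the very first step: $H:=C_{G^\circ}(s)^\circ$ does not contain $u$ (for $s=1$ it is $G^\circ$ itself, while $u$ is outer), so there is no decomposition of $u$ over the simple factors of $H$ and no reduction to long root elements; and your case ``$u=1$'' assumes $x_0$ semisimple. These are precisely the delicate classes of the paper --- they are the ones occurring in Theorem~\ref{thm:mainA}, where products of classes really can be unipotent --- so they cannot be dismissed as routine.

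The paper closes this case by a closure argument you use elsewhere but not here: if $[C,C]$ met only finitely many classes, the same would hold for $[C',C']$ where $C'$ is the closed, hence quasi-semisimple (Theorem~\ref{thm:closedclasses}), class in $\overline{C}$; a quasi-semisimple outer element normalizes but does not centralize a maximal torus, and the torus computation then yields a nontrivial semisimple commutator. Your outer argument in fact extends almost verbatim to quasi-semisimple elements that are not semisimple, since it only uses that conjugation by $x_0$ acts on the normalized torus $T$ by a nontrivial automorphism of finite order; so the gap is repairable, but as written all outer classes in characteristic $2$ (types $A$, $D$, $E_6$) and the order-$3$ cosets in characteristic $3$ are not covered. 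Two smaller points to tidy: you should justify $u\in C_{G^\circ}(s)^\circ$ even for inner $s$ (the component group of a semisimple centralizer has order prime to $p$), and when the rank-one subgroup is adjoint the right criterion is that a lift to $\SL_2$ has trace $\ne\pm2$, which costs nothing since you may choose $t$, respectively $g$, generically.
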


It is worth noting that we also observe that an old result of the first author
\cite{Gur79} answers a question of Nick Katz \cite[2.16.7]{Ka} about
commutators being transvections. This is useful in the proof of the algebraic
group result.
\medskip

Finally, we observe that it is quite easy to extend our results to almost
simple algebraic groups over infinite fields. In particular, we can
answer the following question of Diaconis.

\begin{thm}  \label{cor:compact}
 Let $G$ be a simple compact Lie group. If $C$ and $D$ are noncentral
 conjugacy classes of $G$, then $CD$ consists of infinitely many conjugacy
 classes.
\end{thm}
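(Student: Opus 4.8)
The plan is to deduce this from the connected algebraic group case (parts~(i) and~(iii) of Theorem~\ref{thm:main} and the classification behind it) by passing to the complexification of $G$. A compact simple Lie group $G$ is connected; its complexification $\mathbf G$ is a connected simple linear algebraic group over $\mathbb{C}$, it contains $G$ as a maximal compact subgroup, $G$ is Zariski-dense in $\mathbf G$, and $Z(\mathbf G)=Z(G)$. I would fix noncentral classes $C=x^G$ and $D=y^G$ of $G$, so $x,y\in G\setminus Z(G)$, assume toward a contradiction that $CD$ is a finite union of $G$-conjugacy classes, say $CD=\bigcup_{i=1}^m z_i^G$, and aim to show that then $x$ or $y$ would have to be a central translate of a nontrivial unipotent element of $\mathbf G$, which is impossible inside the compact group $G$.

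The first step is to transfer the finiteness from $G$ to $\mathbf G$. Since $G$ is Zariski-dense in $\mathbf G$ and conjugation is a morphism, each $G$-class $x^G$ is Zariski-dense in the corresponding $\mathbf G$-class $x^{\mathbf G}$, and likewise for $y$ and for the $z_i$; feeding this through the multiplication morphism $\mathbf G\times\mathbf G\to\mathbf G$ shows that $CD$ is Zariski-dense in $\overline{x^{\mathbf G}y^{\mathbf G}}$, whence
\[\overline{x^{\mathbf G}y^{\mathbf G}}=\overline{CD}=\bigcup_{i=1}^m\overline{z_i^G}=\bigcup_{i=1}^m\overline{z_i^{\mathbf G}}.\]
Now each $\overline{z_i^{\mathbf G}}$ lies in a single fibre of the adjoint quotient of $\mathbf G$ (the semisimple parts of its elements are all conjugate), and such a closed $\mathbf G$-stable set is a finite union of conjugacy classes, because the reductive centralizer $C_{\mathbf G}(s)$ of a semisimple element has only finitely many unipotent classes. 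Hence $x^{\mathbf G}y^{\mathbf G}$ is a finite union of $\mathbf G$-classes, i.e.\ the $\mathbf G$-classes $C'=x^{\mathbf G}$ and $D'=y^{\mathbf G}$ have finite product.

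For the last step I would apply our classification of pairs of classes with finite product: at least one of $C',D'$ contains a quasi-central class in its closure, which for the connected group $\mathbf G$ means that $\overline{C'}$ or $\overline{D'}$ meets $Z(\mathbf G)$. Say $z\in\overline{C'}\cap Z(\mathbf G)$; then the semisimple part of $x$ is conjugate to $z$, hence equal to $z$, so $x=zu$ with $u$ unipotent. Since $z\in Z(\mathbf G)=Z(G)\subseteq G$ we get $u=z^{-1}x\in G$, and as a compact group contains no nontrivial unipotent element this forces $u=1$ and $x=z\in Z(G)$, contradicting the choice of $C$. Therefore $CD$ meets infinitely many conjugacy classes of $G$.

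The one point requiring care is the middle step --- that ``finitely many $G$-classes'' genuinely forces ``finitely many $\mathbf G$-classes'' --- which rests on the Zariski density of $G$ in $\mathbf G$ together with the fact that the Zariski closure of a conjugacy class in a reductive group meets only finitely many conjugacy classes. The remaining ingredients (complexification, density, and the absence of nontrivial unipotents in a compact group) are standard, and once the reduction to $\mathbf G$ is in place the statement follows directly from the classification.
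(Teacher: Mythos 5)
Your argument is correct and follows essentially the same route as the paper's proof: pass to the complexification, use Zariski density of $G$ to transfer the finiteness of $CD$ to the product of the corresponding classes of the complexification by taking closures, and then invoke the classification in the connected case. The only cosmetic difference is the endgame: the paper simply observes that $C$ and $D$ are semisimple classes and that no pair of noncentral semisimple classes appears in \cite[Thm.~1.1]{GMT}, whereas you use the closure-meets-the-centre consequence of the classification together with the absence of nontrivial unipotent elements in a compact group --- essentially the same fact in a slight disguise.
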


The paper is organized as follows. In Section~\ref{sec:closed} we collect
some auxiliary results on closed conjugacy classes in not necessarily
connected algebraic groups. We then consider products of unipotent classes in
disconnected algebraic groups in Section~\ref{sec:alggroup} and classify all
cases where the product consists of finitely many classes in
Theorem~\ref{thm:mainA}. In 
Section~\ref{sec:products} we classify arbitrary products of classes meeting
only finitely many conjugacy classes and thus prove Theorem~\ref{thm:mainB}.
In Section~\ref{sec:comm} we show that these are precisely the pairs of
conjugacy classes whose commutator consists just of finitely many classes,
see Theorem~\ref{thm:commut}. In Section~\ref{sec:DNK}, we prove the common
characterization in Theorem~\ref{thm:main} of various finiteness properties
on products, commutators and double cosets and prove
Corollary~\ref{cor:finite}. In Section \ref{sec:D4.S3}, we 
fully investigate the products of two non-trivial cosets
of order 2 in the disconnected groups of type $D_4(k).\fS_3$.
In the final section, we point out how we may extend our results to the case
of infinite fields.  

\medskip

The similar questions for finite groups are considerably harder and will be
dealt with in a forthcoming paper \cite{GMBaer}.
\medskip

We thank Pavel Shumyatsky for communicating his question to us, George McNinch
for pointing out the relevance of the results of Richardson \cite{Rich2},
and Sebastian Herpel for some comments on a preliminary version.

\section{Closed conjugacy classes in disconnected algebraic groups}
\label{sec:closed}

We'll need some information on closed conjugacy classes. In connected
reductive algebraic groups, the closed classes are precisely the semisimple
ones. In the disconnected case, there can be non-semisimple elements whose
class is closed. Following Steinberg \cite[\S9]{StEnd} we call an automorphism
of a connected reductive algebraic group \emph{quasi-semisimple} if it
normalizes a Borel subgroup and a maximal torus thereof.
We also use the following result of Spaltenstein \cite[II.2.21]{Spa}:

\begin{thm}[Spaltenstein]   \label{thm:Spa}
 Let $G$ be a reductive algebraic group over an algebraically closed field.
 Then any coset of $G^\circ$ in $G$ contains at most one closed unipotent
 class. In particular, this class is contained in the closure of all
 unipotent classes in that coset.
\end{thm}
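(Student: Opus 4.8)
The plan is to prove slightly more than is stated: to exhibit, in each coset $G^\circ\sigma$ that contains a unipotent element, a single $G^\circ$-class $\mathcal O_0$ lying in the closure of \emph{every} unipotent class of that coset; the closedness and uniqueness of $\mathcal O_0$ will then drop out of a dimension count. First I would dispose of characteristic $0$, where every unipotent element of $G$ lies in $G^\circ$ and the assertion is the classical fact that $\{1\}$ is the unique closed unipotent class of a connected reductive group; so assume $\operatorname{char}k=p>0$, whence unipotent elements have $p$-power order. Given a coset $G^\circ\sigma$ with a unipotent element, its image in $G/G^\circ$ has $p$-power order, so I may replace $\sigma$ first by a quasi-semisimple representative of the coset (Steinberg) and then by its unipotent part: the semisimple part lies in $G^\circ$, normalizes a Borel and a maximal torus of it, hence lies in that torus, so the unipotent part is again quasi-semisimple and still represents the coset. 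Thus I may assume $\sigma$ is quasi-semisimple and unipotent of order $p^a$, and normalizes a Borel subgroup $B=TU$ of $G^\circ$ with maximal torus $T$ and unipotent radical $U$.

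Next I would reduce every unipotent element of the coset into $B\sigma$. Since every unipotent element of $G$ normalizes a Borel subgroup of $G^\circ$, and $N_G(B)\cap G^\circ\sigma=B\sigma$ (because $\sigma\in N_G(B)$), conjugating by an element of $G^\circ$ brings any unipotent $h$ in the coset to the form $h=b\sigma$ with $b=tu\in TU$. Because $\sigma$ normalizes $B$ it permutes $\Phi^+$, so for a regular dominant cocharacter $\lambda\in Y(T)$ the cocharacter $\mu:=\sum_{i=0}^{p^a-1}\sigma^i\!\cdot\lambda$ is $\sigma$-fixed and still pairs strictly positively with every $\alpha\in\Phi^+$. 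As $\mu(s)$ then commutes with $\sigma$ and contracts $U$ to $1$,
\[
\mu(s)\,h\,\mu(s)^{-1}=\bigl(\mu(s)\,b\,\mu(s)^{-1}\bigr)\sigma=t\,\bigl(\mu(s)\,u\,\mu(s)^{-1}\bigr)\sigma\ \xrightarrow[\ s\to0\ ]{}\ t\sigma .
\]
The limit $t\sigma$ lies in $\overline{h^{G^\circ}}$ and, being a limit of unipotent elements, is unipotent; hence $(t\sigma)^{p^a}=N(t)$ is a $p$-power-torsion element of the torus $T$, where $N(x):=\prod_{i=0}^{p^a-1}\sigma^i x\sigma^{-i}$, so $N(t)=1$, i.e.\ $t\in\ker N$.

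Then I would identify the class of $t\sigma$. The key point is that $\ker N$ is connected: dually $X^*(\ker N)=\operatorname{coker}(N^*)$ with $N^*=\sum_{i=0}^{p^a-1}(\sigma^*)^i$, and since $\sigma^*$ has $p$-power order the operator $\tfrac1{p^a}N^*$ is the rational projection of $X^*(T)$ onto $X^*(T)^\sigma$ and has only $p$-power denominators, so all torsion of $\operatorname{coker}(N^*)$ is $p$-primary, hence infinitesimal in characteristic $p$. As $\{x(\sigma x\sigma^{-1})^{-1}\mid x\in T\}$ is a connected subgroup of $\ker N$ of the same dimension $\dim T-\dim T^\sigma$, it equals $\ker N$; thus $t=y\sigma y^{-1}\sigma^{-1}$ for some $y\in T$ and $t\sigma=y\sigma y^{-1}\in\sigma^{G^\circ}$. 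Therefore $\mathcal O_0:=\sigma^{G^\circ}$ meets $\overline{h^{G^\circ}}$ and hence, being a single orbit, is contained in it, for every unipotent $h$ of the coset — this is the ``in particular'' assertion. Finally $\mathcal O_0$ must be closed, for otherwise $\overline{\mathcal O_0}\smallsetminus\mathcal O_0$ would contain a unipotent class of the coset of strictly smaller dimension whose closure contains $\mathcal O_0$, which is absurd; and any closed unipotent class $\mathcal O$ of the coset satisfies $\mathcal O_0\subseteq\overline{\mathcal O}=\mathcal O$, so $\mathcal O=\mathcal O_0$. This yields exactly one closed unipotent class per coset, lying in the closure of all the others.

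I expect the degeneration step to be routine; the real obstacle is the structure theory of disconnected reductive groups feeding into the setup — the existence of a quasi-semisimple \emph{unipotent} representative, and above all the fact that \emph{every} unipotent element of $G$ normalizes a Borel subgroup of $G^\circ$, which is exactly what legitimizes the reduction into $B\sigma$ and is essentially Steinberg's and Spaltenstein's work. I would also be careful about the connectedness of $\ker N$: this is where the $p$-power order of $\sigma$ enters decisively, and it is precisely what prevents a coset from carrying several closed unipotent classes, each quasi-semisimple unipotent element having a closed class.
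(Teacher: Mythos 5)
The paper offers no proof of this statement at all: it is quoted directly from Spaltenstein \cite[II.2.21]{Spa}, so there is nothing internal to compare against, and your proposal has to be judged on its own. On its own terms it is essentially correct. The reduction to a quasi-semisimple \emph{unipotent} representative $\sigma$ of the coset is fine (the coset trivially contains elements normalizing a pair $T\le B$, and since the coset has $p$-power image in $G/G^\circ$ the semisimple part of such a representative falls into $T$, leaving a quasi-semisimple unipotent part). The contraction $\mu(s)h\mu(s)^{-1}\to t\sigma$ with a $\sigma$-averaged regular dominant cocharacter is the right device, the identity $(t\sigma)^{p^a}=N(t)$ forces $t\in\ker N$ because tori have no nontrivial $p$-power torsion, and your connectedness argument for $\ker N$ is sound: on characters $N^*$ acts as $p^a$ times the projection onto the $\sigma$-fixed part, so the torsion of $\operatorname{coker}(N^*)$ is killed by $p^a$ and hence invisible in the reduced kernel, which therefore coincides with the connected image of $x\mapsto x\,\sigma x\sigma^{-1}x^{-1}\ldots$, i.e.\ $t\sigma\in\sigma^{G^\circ}$. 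The closedness and uniqueness then follow from the containment by the dimension argument exactly as you say, and the argument works equally for $G^\circ$- or $G$-classes.

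The one caveat worth stating plainly: your step ``every unipotent element of $G$ normalizes a Borel subgroup of $G^\circ$'' is not a routine fact — it is itself one of the substantial results of Steinberg's and Spaltenstein's theory of disconnected groups (it sits earlier in the same Chapter II of \cite{Spa} from which the paper quotes II.2.21), and you correctly flag it rather than prove it. One can prove it by induction using the canonical (Borel--Tits) parabolic attached to $u^m\in G^\circ$ when that power is nontrivial, but the case where $\langle u\rangle$ meets $G^\circ$ trivially genuinely requires work. So your proposal is best described not as an independent proof of Spaltenstein's theorem but as a clean derivation of II.2.21 from that earlier structural input; what it buys is that everything beyond that input — the twisted contraction into $T\sigma$, the lattice computation showing $\ker N$ is connected (this is precisely where the $p$-power order of the coset is used), and the minimality/uniqueness bookkeeping — is elementary and self-contained, which is more than the paper provides.
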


\begin{lem}   \label{lem:quasi-semisimple}
 Let $H$ be a simple algebraic group over an algebraically closed field
 of characteristic $p>0$.  Assume that $u$ is a quasi-semisimple
 automorphism of $H$ of order $p$ and $s\in H$ is a semisimple element
 commuting with $u$. Then $us$ is also quasi-semisimple.
\end{lem}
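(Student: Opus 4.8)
The plan is to reduce everything to the structure theory of fixed points of quasi-semisimple automorphisms. Write $C:=C_H(u)^{\circ}$ for the connected centralizer of $u$. The input I would invoke is the classical package, due to Steinberg \cite[\S\S7--9]{StEnd} and Spaltenstein \cite{Spa}: since $u$ is quasi-semisimple, $C$ is a connected reductive group, and intersection with $C$ sets up \emph{compatible} bijections between the $u$-stable Borel subgroups (resp.\ $u$-stable maximal tori) of $H$ and the Borel subgroups (resp.\ maximal tori) of $C$. Note that, $p$ being the characteristic, any automorphism of order $p$ is automatically unipotent, so $u$ lies in a nonidentity unipotent coset and Theorem~\ref{thm:Spa} is the ambient context; fixing $u$-stable $B\supseteq T$ as in the hypothesis simply records the nonemptiness built into this picture.

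First I would note that conjugation by $s$ induces an algebraic automorphism of $C$: indeed $s\in C_H(u)$, so $s$ normalizes $C_H(u)$ and hence its identity component $C$, and this automorphism is \emph{semisimple} because $s$ is a semisimple element of the (possibly disconnected) group $\langle C,s\rangle\le C_H(u)$ whose identity component is $C$. By Steinberg's theorem that every semisimple automorphism of a connected reductive group is quasi-semisimple \cite[\S7]{StEnd}, there exist a Borel subgroup $B_C$ of $C$ and a maximal torus $S\le B_C$ of $C$, both normalized by $s$.

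Next I would transport this back to $H$. Let $B'$ be the unique $u$-stable Borel subgroup of $H$ with $B'\cap C=B_C$, and let $T'\le B'$ be the $u$-stable maximal torus of $H$ corresponding to $S$ under the bijections above. Since $u$, viewed as an automorphism of $H$, commutes with $\mathrm{Inn}(s)$, the conjugate $sB's^{-1}$ is again a $u$-stable Borel of $H$; under the correspondence it is sent to $s(B'\cap C)s^{-1}=sB_Cs^{-1}=B_C=B'\cap C$, so by injectivity $sB's^{-1}=B'$, and the identical argument gives $sT's^{-1}=T'$. Thus $B'$ and $T'$ are normalized both by $u$ and by $s$, hence by $us$, so $us$ is quasi-semisimple, as claimed.

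I expect the only real obstacle to be making sure the structural facts in the first paragraph are genuinely on the books for a unipotent quasi-semisimple automorphism of order $p$, and not merely for semisimple ones --- that is, that $C_H(u)^{\circ}$ is reductive and that the Borel/torus correspondence (together with its $C_H(u)$-equivariance, used in the third paragraph) holds at this level of generality. Everything else is formal: once $s$ acts as a semisimple --- hence quasi-semisimple --- automorphism of $C$, a pair $(B',T')$ as above exists, and the commutation of $u$ with $\mathrm{Inn}(s)$ forces that pair to be $s$-stable as well.
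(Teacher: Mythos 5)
The gap is the ``transport back'' step. The compatible bijection $B'\mapsto B'\cap C$ between $u$-stable Borel subgroups of $H$ and Borel subgroups of $C$ is not part of the classical package for quasi-semisimple automorphisms, and at the generality you invoke it is simply false: conjugation by a regular semisimple element $s\in H$ is a quasi-semisimple (indeed semisimple) automorphism, its fixed-point group is the unique maximal torus $T$ containing $s$, and every one of the $|W|$ Borel subgroups containing $T$ is $s$-stable and meets $C_H(s)=T$ in the same subgroup $T$; so the intersection map is far from injective. What the cited sources (\cite[\S\S7--9]{StEnd}, \cite{Spa}, \cite[Thm.~1.8]{DM94}) actually provide is information about $u$-stable \emph{pairs} $T\subset B$ (and transitivity of the centralizer on such pairs for quasi-central elements), not a statement about arbitrary $u$-stable Borel subgroups. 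To make your argument work for the specific $u$ of the lemma you would have to show in addition that every $u$-stable Borel subgroup of $H$ contains a $u$-stable maximal torus (plus injectivity and $C_H(u)$-equivariance of the correspondence); and this is exactly where the hypothesis bites, since $u$ has order $p$ equal to the characteristic, so no averaging argument is available, and a $p$-element can act on the affine space of maximal tori of a Borel subgroup without fixed points (compare the translation action of $\mathbb{Z}/p$ on $\mathbb{A}^1$ in characteristic $p$). You flag this yourself as the ``only real obstacle'', and it is a genuine one, not a bookkeeping matter.

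The paper's proof sidesteps all of this with a much softer argument that you could adopt: fix the $u$-stable pair $T\subset B$ furnished by quasi-semisimplicity; by \cite[Thm.~1.8(iii)]{DM94} the full centralizer $D=C_H(u)$ is connected and reductive, $T\cap D$ is a maximal torus of $D$ and $B\cap D$ a Borel subgroup of $D$. Since $s$ is a semisimple element of the \emph{connected} group $D$, it is $D$-conjugate to an element of $T\cap D$, and conjugating by elements of $D$ does not move $u$; so one may assume $s\in T\cap D\subseteq T\subseteq B$, whence $us$ normalizes both $T$ and $B$ and is quasi-semisimple. In particular neither Steinberg's theorem on semisimple automorphisms nor any Borel/torus correspondence is needed --- only the structure of the fixed pair and the fact that a semisimple element of a connected group lies in a maximal torus.
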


\begin{proof}
By assumption $u$ normalizes a Borel subgroup $B$ of $H$ and a maximal
torus $T \le B$. Let $D:=C_H(u)$. By \cite[Thm.~1.8(iii)]{DM94}, $T \cap D$
is a maximal torus of $D$, $B \cap D$ is a Borel subgroup of $D$ and $D$ is
connected (see also Steinberg \cite[8.2]{StEnd} for the simply connected case).
(Alternatively, this can be seen by inspection of the various cases.)
Thus, $s$ is conjugate in $D$ to an element of $T \cap D$ and so we may
assume that $s \in T \cap D$. Then $us$ normalizes $T$ and $B$.
\end{proof}

\begin{thm}   \label{thm:closedclasses}
 Let $G$ be a reductive algebraic group over an algebraically closed field $k$
 of characteristic $p$ and let $C\subset G$ be a conjugacy class, $g\in C$.
 Then the following are equivalent:
 \begin{enumerate}[\rm(i)]
  \item $C$ is closed;
  \item $C_G(g)$ is reductive; and
  \item $g$ is quasi-semisimple.
 \end{enumerate}
\end{thm}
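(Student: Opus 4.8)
The plan is to prove the cycle of implications (i) $\Rightarrow$ (ii) $\Rightarrow$ (iii) $\Rightarrow$ (i), drawing on the structural lemmas already established (Lemma \ref{lem:quasi-semisimple}, Theorem \ref{thm:Spa}) together with standard facts about orbits of algebraic group actions and Jordan decomposition in disconnected reductive groups. Throughout I will use that $g$ has a Jordan decomposition $g = su = us$ with $s$ semisimple and $u$ unipotent, and that both $s$ and $u$ are powers of $g$ (hence lie in $C_G(g)$); here $s \in Gs_0$ for the coset of $g$, but $u \in G^\circ$ since unipotent elements lie in the connected component.

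\textbf{(i) $\Rightarrow$ (iii).} Suppose $C$ is closed. First reduce to the case where $g$ is unipotent times a central-in-its-centralizer semisimple part: working inside $H := C_G(s)^\circ$ (a connected reductive group, by a theorem of Steinberg on centralizers of semisimple elements, valid in the disconnected setting for $s$ acting as a quasi-semisimple automorphism once we know $s$ is quasi-semisimple — so I first need to handle the semisimple part). Actually the cleaner route: the orbit map $G \to C$, $x \mapsto x^{-1}gx$, has image $C$ closed iff the orbit is of minimal dimension in its closure; by the closed-orbit lemma $\overline{C}$ contains a closed orbit, and I want to show $\overline{C} = C$ forces $g$ quasi-semisimple. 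Consider the semisimple part: $s \in \overline{\langle g\rangle}$ roughly speaking, and one shows $\overline{C}$ meets $C_G(s)$-cosets appropriately; restricting to the coset $G^\circ g$ and the action of $G^\circ$, the unipotent part $u$ lives in $C_{G}(s)$, and within the reductive group $L := C_G(s)$ (with $L^\circ g$ the relevant coset), the $L^\circ$-class of $u$ (the "unipotent part in this coset") must be closed. By Theorem \ref{thm:Spa} the closed unipotent class in a coset is unique and lies in the closure of every unipotent class in that coset; if $u$ is nontrivial as an automorphism-times-unipotent, a cocharacter $\lambda$ of $L^\circ$ (a suitable associated cocharacter from the Bala–Carter/Kempf theory, or just $t \mapsto \lambda(t) u \lambda(t)^{-1} \to$ a smaller element) gives a non-closed orbit, contradicting closedness of $C$ unless $u$ acts quasi-semisimply, i.e. is trivial modulo the structure forced by Lemma \ref{lem:quasi-semisimple}. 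The upshot is $g$ normalizes a Borel and maximal torus, i.e. is quasi-semisimple. The main obstacle is making the "push $u$ to a smaller element via a cocharacter" step correct in the disconnected coset — the connected case is classical (non-trivial unipotent orbits are never closed because $\lambda(t) u \lambda(t)^{-1} \to 1$), and the disconnected analogue is exactly what Theorem \ref{thm:Spa} is there to supply: a non-quasi-semisimple element of the coset lies strictly above the unique closed unipotent class in its $C_G(s)^\circ$-coset.

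\textbf{(iii) $\Rightarrow$ (ii).} Assume $g$ is quasi-semisimple, so $g$ normalizes a Borel $B$ and maximal torus $T \le B$ of $G^\circ$. Write $g = su$. One shows $s$ is again quasi-semisimple (it normalizes the same $T, B$ since $s$ is a power of $g$ and $T$ is the unique maximal torus of $B$, $B$ characteristic in its normalizer's identity component) and $u$ is a quasi-semisimple unipotent automorphism of $C_{G^\circ}(s)$, of order $p$. By Lemma \ref{lem:quasi-semisimple} applied in $H := C_{G^\circ}(s)$ — or directly — $C_H(u)$ is connected reductive; more generally, by \cite[Thm.~1.8(iii)]{DM94} the centralizer of a quasi-semisimple element is reductive (with connected component having a Borel $B \cap C_G(g)^\circ$ and maximal torus $T \cap C_G(g)^\circ$). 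Hence $C_G(g)^\circ$ is reductive; and $C_G(g)/C_G(g)^\circ$ is finite, so $C_G(g)$ is reductive. I would cite \cite[Thm.~1.8]{DM94} and Steinberg \cite[\S8, \S9]{StEnd} for this; the work is already packaged in the references, so this implication is short.

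\textbf{(ii) $\Rightarrow$ (i).} Assume $C_G(g)$ is reductive. The $G$-orbit of $g$ under conjugation has $C \cong G/C_G(g)$. A standard fact: if the stabilizer is reductive, the orbit is affine, but to get \emph{closed} one argues via Matsushima's criterion for the action or, more directly in the conjugation setting, via the following: the orbit $C$ is closed iff it contains a semisimple-type representative iff $g$ is quasi-semisimple — so really (ii) $\Rightarrow$ (iii) $\Rightarrow$ (i), and I would instead prove (ii) $\Rightarrow$ (iii). For that: if $C_G(g)$ is reductive, its unipotent radical is trivial; but the unipotent part $u$ of $g$ lies in $C_G(g)$, and if $g$ were not quasi-semisimple, $u$ would contribute a non-trivial unipotent subgroup normalized suitably — more precisely, using the associated cocharacter $\lambda$ of the Kempf–Rousseau theory attached to $u$ inside $C_G(s)^\circ$, the group $C_{C_G(s)}(u)$ has non-trivial unipotent radical unless $u$ is quasi-semisimple, whence $C_G(g) = C_{C_G(s)}(u)$ fails to be reductive. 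So non-quasi-semisimplicity contradicts (ii). Then (iii) $\Rightarrow$ (i) closes the loop: a quasi-semisimple $g$ lies in $N_G(B) \cap N_G(T)$, and its $G$-class is closed because it is then detected by the finite-to-one map to the "twisted Weyl group" data / because $C_G(g)$ is reductive of the same rank and one invokes the affine-plus-Matsushima argument, or simply because the closure $\overline C$ would have to contain a closed orbit of strictly smaller dimension lying in the same coset, forcing by Theorem \ref{thm:Spa} its unipotent-in-coset part to coincide with that of $g$, hence $\overline{C} = C$.

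\textbf{Where the real difficulty lies.} The crux is the equivalence with quasi-semisimplicity in the \emph{disconnected} case: in connected reductive groups "closed $\Leftrightarrow$ semisimple $\Leftrightarrow$ reductive centralizer" is classical (Borel–Springer, Richardson), and the heavy lifting here is transporting it across cosets. The key enabling inputs are exactly the ones the authors front-loaded: Spaltenstein's uniqueness of the closed unipotent class in a coset (Theorem \ref{thm:Spa}) to control the unipotent part, and the Digne–Michel / Steinberg structure theory of centralizers of quasi-semisimple automorphisms (\cite[Thm.~1.8]{DM94}, \cite[\S8--9]{StEnd}) to get reductivity and connectedness of $C_G(g)^\circ$. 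I expect the cleanest write-up to go (iii) $\Rightarrow$ (ii) $\Rightarrow$ (i) $\Rightarrow$ (iii), with (iii) $\Rightarrow$ (ii) citing DM94, (ii) $\Rightarrow$ (i) via Matsushima/affine-orbit plus reducing to the unipotent part, and (i) $\Rightarrow$ (iii) the genuinely new step using Theorem \ref{thm:Spa} after reducing modulo the semisimple part $s$.
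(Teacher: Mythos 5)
Your overall skeleton overlaps with the paper's in two places: the paper also gets ``closed $\Rightarrow$ reductive centralizer'' by noting that a closed class is an affine variety and citing Richardson \cite[Lemma~10.1.3]{Rich2} (essentially the Matsushima-type criterion you invoke), and it disposes of the equivalence of (i) and (iii) by citing Spaltenstein \cite[Lemme~II.1.15]{Spa} outright rather than re-deriving it; your sketch of (i)~$\Rightarrow$~(iii) is both vaguer and partly circular (you use that the unique closed unipotent class in a coset is the quasi-semisimple one, which is essentially the statement being proved), but that could be repaired simply by citing Spaltenstein as the paper does.

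The genuine gap is in the one implication that carries the real content, (ii)~$\Rightarrow$~(iii). You assert that if $g=su$ is not quasi-semisimple, then ``using the associated cocharacter $\lambda$ of the Kempf--Rousseau theory attached to $u$ inside $C_G(s)^\circ$, the group $C_{C_G(s)}(u)$ has non-trivial unipotent radical unless $u$ is quasi-semisimple.'' But after the reductions that matter here (which the paper carries out: one may take $G=\langle g,G^\circ\rangle$ with $G^\circ$ simple, and the Borel--Tits step shows $\langle u\rangle\cap G^\circ=1$, so $u$ is an \emph{outer} automorphism of order $p=[G:G^\circ]\in\{2,3\}$ with $s\in G^\circ$), the element $u$ is not a unipotent element of the connected group $C_G(s)^\circ$ at all, so the theory of associated cocharacters and canonical parabolics for unipotent classes of connected groups does not apply to it. The claim that a non-quasi-semisimple outer unipotent element has non-reductive centralizer is exactly the statement that has to be proved, and the paper does this not by a general cocharacter argument but by Lemma~\ref{lem:quasi-semisimple} (to exclude the quasi-semisimple outer class) followed by explicit case analysis: for $p=2$ outside type $D_n$ via Aschbacher--Seitz \cite[19.7--19.9]{AS} ($u=vt$ with $v$ quasi-semisimple and $t$ a long root element in $C_{G^\circ}(v)$, forcing a nontrivial unipotent radical), for triality via \cite[4.9.2]{GLS}, and for $G^\circ=D_n(k)$ in characteristic~2 by a hands-on analysis of the fixed space of $u$ on the natural module, its radical with respect to the alternating and quadratic forms, and a reduction to a transvection/$\Sp_{2n-2}(k)$ configuration. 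None of this is a routine consequence of Theorem~\ref{thm:Spa} or of Kempf--Rousseau theory, so as written your proposal leaves the crucial direction unproved.
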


\begin{proof}
The equivalence of (i) and (iii) is shown in \cite[Lemme~II.1.15]{Spa} (see
also \cite[Cor.~II.2.22]{Spa}).

Now assume (i), so $C$ is closed. Then $C$ is an affine variety (since $G$ is
affine). It follows by \cite[Lemma~10.1.3]{Rich2} that $C_G(g)$ is reductive.

Finally let's show that (ii) implies (iii). Note that all the conclusions
concern only $\langle g, G^\circ\rangle$ and so we assume that
$G=\langle g,G^\circ\rangle$. There is no loss in assuming that $G^\circ$ is
semisimple with $C_G(G^\circ)=1$. Moreover, we may assume that $g$ acts
transitively on the simple components of $G^\circ$. If there are $m$
components, then $C_{G^\circ}(g) \cong C_L(g^m)$ where the components are
isomorphic to $L$. Moreover, $g$ is quasi-semisimple if and only if $g^m$ is.
Thus, it suffices to assume that $G^\circ$ is simple.

Assume that $g$ is not quasi-semisimple. We show that $C_G(g)$ is not
reductive.

Write $g=su=us$ where $s$ is semisimple and $u$ is unipotent. Let $v$ be a
generator of $\langle u \rangle \cap G^\circ$. If $v \ne 1$, then by the
Borel--Tits theorem, the unipotent radical $U$ of $C_{G^\circ}(sv)$ is
nontrivial, whence since $u$ acts as a $p$-element on $U$, $C_U(u)^{\circ}$
is a positive dimensional subgroup contained in the unipotent radical
of $C_G(g)$. So $v=1$, and hence $u^p=1$.

If $p \ne [G:G^\circ]$, then $g=s$ is semisimple and in particular
quasi-semisimple. Thus we may assume that $p = [G:G^\circ]$ (in particular,
$p=2$ or $3$).

By the discussion above, we have that $u$ is an automorphism of order $p$
and $s \in G^\circ$. Note that $u$ does not lie in the unique class of outer
unipotent elements which are quasi-semisimple, since otherwise by the
previous lemma, $g$ would also be quasi-semisimple.

We consider the various cases.
For the moment, exclude the case that $p=2$ and $G^\circ=D_n(k)$, $n\ge 4$.
If $p=2$,  it  follows by \cite[19.7--19.9]{AS} that $u=vt$ where $v$ is a
quasi-semisimple graph automorphism and $t$ is a long root element in
$C_{G^\circ}(v)$.  But then, $C_{G^\circ}(u) < C_{G^\circ}(v)$ has a nontrivial
unipotent radical.
The same argument applies for $p=3$ and $G=D_4(k)$ \cite[4.9.2]{GLS}.

Finally, consider the case that $G^\circ=D_n(k)$, $n \ge 4$ and $p=2$.
Let $V$ be the natural $2n$-dimensional orthogonal module for $G$.

Suppose that $s=1$.  Note that $u$ is not a transvection on $V$ (which is the
unique quasi-semisimple unipotent class in $uG^{\circ}$). Thus, the fixed
space $W$ of $u$ has dimension at most $2n-2$.
Let $R$ be the radical of $W$ (with respect to the alternating form preserved
by $G$). Note that $R \ne 0$ for otherwise, $W$ is nonsingular and so
$V = W \oplus W^{\perp}$, a contradiction since $u$ will have fixed points
on $W^{\perp}$.   If $R$ is totally singular with respect to the quadratic
form preserved by $G$, then the centralizer of $u$ is contained in the
parabolic subgroup stabilizing $R$ and so $C_Q(u)$ is a positive dimensional
subgroup of the unipotent radical of $C_{G^{\circ}}(u)$, which thus is not
reductive.
Indeed the same argument applies if $\dim R > 1$, for then replace $R$
by $R'$, the radical with respect to the quadratic form.  So $R$ is
$1$-dimensional and is generated by a non-isotropic vector.  The stabilizer
of this $1$-dimensional space is   $\langle  x \rangle \times \Sp_{2n-2}(k)$
with $x$ a transvection. If $u \notin \langle  x \rangle$, then
$C_{G^{\circ}}(u) \cong C_{\Sp_{2n-2}(k)}(y)$ for some nontrivial unipotent
element $y$ and so is not reductive.

If $s \ne 1$, write $V = W \perp W'$ where $W=[s,V]$. Note that $W \ne 0$ since
$s\ne1$. Also, (since $p=2$) the centralizer of $s$ in the stabilizer of $W$
is contained in $\SO(W)$. In particular, if $u$ acts nontrivially on $W$,
it follows (by inspection or Borel--Tits) that the centralizer of $G$ has
non-trivial unipotent radical on $W$ and so on $V$. Thus, $u$ is trivial on $W$.
Since the centralizer of $u$ on $W'$ is reductive, it follows  by the previous
case that $u$ induces a transvection on $W'$, whence $u$ is a transvection.
This contradiction completes the proof.
\end{proof}

Note that the proof actually also shows that:

\begin{cor} \label{cor:unique}
 Let $G$ be a reductive algebraic group over an algebraically closed field.
 Let $g\in G$ with semisimple part $s$. There exists a unique closed conjugacy
 class in $gG^\circ$ whose elements have semisimple part conjugate to $s$.
\end{cor}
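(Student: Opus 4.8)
The plan is to reduce to Theorem~\ref{thm:closedclasses} and Spaltenstein's Theorem~\ref{thm:Spa}. By Theorem~\ref{thm:closedclasses} a conjugacy class of $G$ is closed exactly when its elements are quasi-semisimple, so it suffices to show that, up to conjugacy, there is precisely one quasi-semisimple element of $gG^\circ$ with semisimple part conjugate to $s$.

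\emph{Existence.} Write $g=su$ for the Jordan decomposition and put $H:=C_G(s)$; since $s$ is semisimple, $H$ is reductive, and $u$ is a unipotent element of $H$. The closure of the $H^\circ$-orbit of $u$ contains a closed $H^\circ$-orbit, consisting of unipotent elements (unipotency being a closed condition and the orbit lying in $\overline{u^{H^\circ}}$); picking a representative $v$ of it, the $H$-class of $v$ is a closed unipotent class of $H$ contained in the coset $uH^\circ$. By Theorem~\ref{thm:closedclasses} applied to $H$, $C_H(v)$ is reductive. Now $h:=sv$ has semisimple part $s$ and unipotent part $v$, lies in $gG^\circ$ because $v\in uH^\circ\subseteq uG^\circ$, and has $C_G(h)=C_G(s)\cap C_G(v)=C_H(v)$ reductive; hence $h$ is quasi-semisimple by Theorem~\ref{thm:closedclasses}, and its class is a closed class as required.

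\emph{Uniqueness.} Let $h'$ be any quasi-semisimple element with a representative in $gG^\circ$ and semisimple part $t$ conjugate to $s$. Passing to $G/G^\circ$, the image $\bar t$ is the $p'$-part of $\bar g$, hence equals $\bar s$, so $t$ and $s$ are $G$-conjugate semisimple elements lying in the common coset $sG^\circ$; after conjugating we may take $t=s$. Then $h'=sw$ with $w$ a unipotent element of $H=C_G(s)$ lying in the coset $uH^\circ$, and $C_H(w)=C_G(h')$ is reductive, so by Theorem~\ref{thm:closedclasses} the $H$-class of $w$ is closed. By Spaltenstein's Theorem~\ref{thm:Spa} applied to $H$, the coset $uH^\circ$ contains only one closed unipotent class, whence $w$ is $H$-conjugate to $v$ and $h'=sw$ is conjugate to $h=sv$.

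The delicate point is the normalisation $t=s$ in the uniqueness step: one has to check that, after conjugating the semisimple part of $h'$ onto $s$, the unipotent part still falls into the coset $uH^\circ$ of $H^\circ$ and not into some other component of the subset $uC_{G^\circ}(s)$ of $H$, so that Spaltenstein's uniqueness applies. This forces one to keep track of the component group of $C_G(s)$ and of the difference between $G$- and $G^\circ$-conjugacy, and, unlike the rest of the argument, it is not a purely formal consequence of Theorems~\ref{thm:closedclasses} and~\ref{thm:Spa}; it is also the reason the statement is phrased for $G$-conjugacy classes, in line with the convention of Theorem~\ref{thm:closedclasses}.
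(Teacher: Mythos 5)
Your existence argument is sound, but the uniqueness half has a genuine gap --- precisely the one you flag in your closing paragraph and then do not close. After conjugating the semisimple part $t$ onto $s$ you simply assert that the unipotent part $w$ of the second element lies in the coset $uH^\circ$ of $H=C_G(s)$. Nothing in your argument guarantees this: $H$ is in general disconnected, several components of $H$ map to one and the same coset of $G^\circ$ in $G$ (indeed several lie inside $G^\circ$ itself), and when $G/G^\circ$ is nonabelian (type $D_4(k).\fS_3$) the conjugation carrying $t$ to $s$ can even move the element out of $gG^\circ$, so the $G^\circ$-coset of $w$ is only a conjugate of that of $u$. All you actually obtain is a unipotent $w\in H$ with closed $H$-class lying in \emph{some} component of $H$ over (a conjugate of) $uG^\circ$. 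Theorem~\ref{thm:Spa} applied to $H$ only excludes two closed unipotent classes in the \emph{same} $H^\circ$-coset, so it does not identify $w^H$ with $v^H$; ruling out two closed unipotent $H$-classes sitting in different $H^\circ$-cosets over the same coset of $G^\circ$ is essentially the assertion of the corollary itself in the mixed case. Since for $s=1$ the corollary is just Theorem~\ref{thm:Spa}, the entire added content of the statement is concentrated in the step you leave open, so the proposal as written does not prove the corollary.

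The paper does not take your route through $C_G(s)$ and Spaltenstein at all: the corollary is recorded as a by-product of the explicit case analysis in the proof of Theorem~\ref{thm:closedclasses}. There, after reducing to $G^\circ$ simple, one sees that a quasi-semisimple element has trivial unipotent part unless $p=[G:G^\circ]$, and in that case the unipotent part is pinned down concretely (via Lemma~\ref{lem:quasi-semisimple} together with the Aschbacher--Seitz/GLS description of outer unipotent elements, respectively the transvection analysis on $C_V(s)$ in type $D_n$ with $p=2$); it is this explicit control, not a formal combination of Theorems~\ref{thm:closedclasses} and~\ref{thm:Spa}, that shows the closed class is determined by the coset and the $G$-class of $s$. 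If you want to salvage your more formal argument you must add the missing component-group step --- showing that the relevant closed unipotent $H$-classes over a fixed $G^\circ$-coset are $H$-conjugate (after undoing the conjugation when $G/G^\circ$ is nonabelian) --- and in practice that again requires the same case-by-case information the paper's proof already contains.
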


We also have the obvious corollary.

\begin{cor}   \label{cor:coprime}
 Let $G$ be a reductive algebraic group over an algebraically closed field $k$.
 Assume that the order of $g$ in $G/G^\circ$ is prime to the characteristic
 of $k$. Then the class of $g$ is closed if and only if $g$ is semisimple.
\end{cor}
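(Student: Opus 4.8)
The plan is to derive both directions from Theorem~\ref{thm:closedclasses}, which already reduces ``closed'' to ``quasi-semisimple''; the only remaining work is to play quasi-semisimplicity off against the Jordan decomposition and the order of $g$ modulo $G^\circ$.

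For the implication ``$g$ semisimple $\Rightarrow$ class of $g$ closed'' (which needs no hypothesis on the order), I would note that conjugation by $g$ is a semisimple automorphism of the connected reductive group $G^\circ$, hence by Steinberg \cite[7.5]{StEnd} it normalizes a Borel subgroup of $G^\circ$ together with a maximal torus of it; thus $g$ is quasi-semisimple and the equivalence (i)$\Leftrightarrow$(iii) of Theorem~\ref{thm:closedclasses} finishes this direction.

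For the converse I would assume the class of $g$ is closed, so that $g$ is quasi-semisimple by Theorem~\ref{thm:closedclasses}: it normalizes a Borel subgroup $B\le G^\circ$ and a maximal torus $T\le B$. Let $m$ be the order of $gG^\circ$ in the finite group $G/G^\circ$; by hypothesis $\Char k\nmid m$, a vacuous condition if $\Char k=0$. Then $g^m\in G^\circ$ still normalizes $B$ and $T$; as $N_{G^\circ}(B)=B$ we get $g^m\in B$, and as $N_B(T)=T$ (both standard for connected reductive $G^\circ$) we get $g^m\in T$, so $g^m$ is semisimple. Writing $g=su=us$ for the Jordan decomposition of $g$ in $G$, the Jordan decomposition of $g^m$ is $s^mu^m$, so $u^m=1$. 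Since $u$ is unipotent its order is a power of $\Char k$ (and $u=1$ outright when $\Char k=0$), so $u^m=1$ with $\Char k\nmid m$ forces $u=1$; hence $g=s$ is semisimple. Along the way one sees that $u\in G^\circ$, its image in $G/G^\circ$ being a $p$-element of order dividing $m$, hence trivial.

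I do not expect a genuine obstacle here; the points needing a little care are the self-normalizing facts $N_{G^\circ}(B)=B$ and $N_B(T)=T$ (which place $g^m$ in $T$) and the compatibility of the Jordan decomposition with taking powers in the possibly disconnected group $G$. I would also stress that the coprimality hypothesis is essential: in characteristic $2$ a transvection in $D_n(k).2$ is a quasi-semisimple unipotent element, hence has closed class by Theorem~\ref{thm:closedclasses}, yet is not semisimple and has order $p=2$ modulo $G^\circ$.
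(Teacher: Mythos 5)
Your argument is correct. It follows the same skeleton as the paper -- both directions are funnelled through Theorem~\ref{thm:closedclasses}, which converts ``closed class'' into ``quasi-semisimple'' -- but where the paper simply quotes Steinberg \cite[\S9]{StEnd} for the fact that, under the coprimality hypothesis, quasi-semisimple elements are semisimple, you prove that fact directly: from $g$ normalizing $B$ and $T$ you place $g^m$ in $N_{G^\circ}(B)=B$ and then in $N_B(T)=T$, so $g^m$ is semisimple, and uniqueness of the Jordan decomposition gives $u^m=1$, which kills the unipotent part $u$ since its order is a $p$-power (or $u$ has infinite order in characteristic $0$). That substitution buys a self-contained, elementary proof of the harder direction at the cost of a few lines; the easy direction (semisimple $\Rightarrow$ quasi-semisimple, via Steinberg's theorem that a semisimple automorphism of a connected group normalizes a Borel subgroup and a maximal torus of it) is used implicitly in the paper and explicitly by you, and indeed needs no hypothesis on the order. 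Your closing remark that the hypothesis is sharp (transvections in $D_n(k).2$ in characteristic~$2$ are quasi-semisimple, hence have closed class, but are not semisimple) matches the paper's discussion of that class and is a worthwhile observation. The parenthetical aside that $u\in G^\circ$ is not needed for the argument, and is the only place where you are slightly informal; everything essential is sound.
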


\begin{proof}
In this situation, quasi-semisimple elements are semisimple (see
\cite[\S9]{StEnd}), and the result follows.
\end{proof}

\section{Pairs of unipotent classes in almost simple algebraic groups}   \label{sec:alggroup}

In this section we classify pairs of unipotent conjugacy classes in almost
simple algebraic groups such that all pairs of elements generate a unipotent
subgroup (see Theorem~\ref{thm:mainA}). Note that the connected case was
already treated in \cite[Thm.~1.1]{GMT}. First note the following:

\begin{lem}   \label{lem:dense}
 Let $G$ be a reductive algebraic group and $C_1,C_2,D\subset G$ three
 $G^\circ$-orbits under conjugation. Assume that
 $D\subseteq C_1C_2$ and $\dim C_1+\dim C_2=\dim D$. Then:
 \begin{enumerate}[\rm(a)]
  \item $D$ is open dense in the product $C_1C_2$;
  \item $\{(c_1,c_2)\in C_1 \times C_2 \mid c_1c_2\in D\}$ is a dense open
   subset of $C_1 \times C_2$;
  \item $C_1C_2$ consists of finitely many classes.
 \end{enumerate}
\end{lem}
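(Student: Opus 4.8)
The plan is to work with the conjugation action of $G^\circ$ on the product variety $C_1 \times C_2$ and the multiplication map into $G$, and to exploit the dimension hypothesis together with irreducibility. First I would set $G^\circ$ acting diagonally on $C_1 \times C_2$ by simultaneous conjugation, and consider the morphism $\mu\colon C_1 \times C_2 \to G$, $(c_1,c_2)\mapsto c_1 c_2$; this is $G^\circ$-equivariant for the conjugation action on the target. Since $C_1, C_2$ are single $G^\circ$-orbits they are irreducible, hence so is $C_1 \times C_2$, and therefore the image $C_1 C_2 = \mu(C_1\times C_2)$ is an irreducible constructible subset of $G$, of dimension at most $\dim C_1 + \dim C_2 = \dim D$. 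On the other hand $D \subseteq C_1 C_2$ and $D$ is itself irreducible (a single orbit) of dimension exactly $\dim D$; since an irreducible constructible set cannot properly contain an irreducible closed-in-it subset of the same dimension without them sharing a common closure, we get $\overline{D} = \overline{C_1 C_2}$. Because $D$, being a $G^\circ$-orbit, is open in its closure $\overline D$, it follows that $D$ is open (and dense) in $\overline{C_1C_2}$, and a fortiori open dense in the constructible set $C_1 C_2$; this gives (a).

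For (b), I would pull back along $\mu$: the set $E := \{(c_1,c_2)\in C_1\times C_2 \mid c_1c_2 \in D\} = \mu^{-1}(D)$ is the preimage of a set that is open in $\overline{C_1C_2} \supseteq \mathrm{im}(\mu)$, hence $E$ is open in $C_1\times C_2$. It is nonempty since $D\subseteq C_1C_2$ is nonempty, and $C_1\times C_2$ is irreducible, so $E$ is dense; this is (b). (One should check $E$ is actually nonempty, but this is immediate from $D \neq \emptyset$ and $D \subseteq C_1C_2 = \mu(C_1 \times C_2)$.)

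For (c), the key point is that $C_1 C_2$ is stable under $G^\circ$-conjugation, so it is a union of $G^\circ$-classes; write $C_1 C_2 = \bigsqcup_i E_i$ with the $E_i$ the distinct classes occurring. Each $E_i$ is locally closed and irreducible, and by (a) exactly one of them, namely $D$, is open dense in $C_1 C_2$; hence all the others are contained in the closed set $\overline{C_1 C_2}\setminus D$, which has dimension strictly less than $\dim D$. Now I would argue by induction on dimension: $\overline{C_1C_2}\setminus D$ is a $G^\circ$-stable closed subset, a finite union of irreducible components each of smaller dimension, and the classes $E_i$ with $i$ such that $E_i \neq D$ are among the classes meeting it; since conjugacy classes in a fixed-dimensional $G^\circ$-stable variety that is a finite union of orbit-closures can only be finite in number when dimensions strictly drop, a Noetherian induction on $\dim \overline{C_1 C_2}$ closes the argument. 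The main obstacle here is making the induction precise: one needs that a $G^\circ$-stable constructible set of dimension $d$ containing only a finite number of top-dimensional classes, whose lower-dimensional part again has this property by induction, is itself a finite union of classes — this is standard (the orbits of an algebraic group action stratify the variety, and Noetherianity bounds the number of strata of each dimension that can be orbit-closures inside a fixed variety), but it is the one step that requires genuine care rather than formal manipulation. Alternatively, and perhaps more cleanly, I would invoke directly the fact that $C_1C_2$ is a constructible $G^\circ$-stable set and that its classes of maximal dimension in each stratum are finite in number because $C_1C_2 \setminus D$ has strictly smaller dimension and one iterates; the dimension strictly decreases at each stage, so the process terminates.
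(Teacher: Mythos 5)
Your parts (a) and (b) are correct and follow exactly the paper's route: irreducibility of the orbits and of $C_1C_2$, the dimension count forcing $\overline{C_1C_2}=\overline{D}$, openness of the orbit $D$ in its closure, and then (b) by pulling back the open set $D$ under the multiplication morphism.

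Part (c), however, has a genuine gap. Your inductive principle --- that a $G^\circ$-stable constructible set can contain only finitely many classes of top dimension, so that Noetherian induction on dimension yields finiteness --- is false as a general statement, and it is exactly the point where a real input is needed. For instance, in $G=\SL_2(k)$ the set of regular semisimple elements is $G$-stable and consists of infinitely many classes, all of the same (maximal) dimension; more to the point, nothing in a pure dimension argument prevents the boundary $\overline{D}\setminus D$ from being an \emph{infinite} union of lower-dimensional classes, so the induction never gets started. (Your argument also never uses that $G$ is reductive, which is a warning sign.) What the paper actually uses is the specific fact that the closure of a single conjugacy class in a reductive group is a finite union of classes: since $C_1C_2\subseteq\overline{C_1C_2}=\overline{D}$, this immediately gives (c). That fact is not formal Noetherianity; it rests on the structure of class closures (the semisimple part is constant up to conjugacy on $\overline{D}$, and there are only finitely many classes with a given semisimple part because centralizers of semisimple elements are reductive and have finitely many unipotent classes --- with the appropriate quasi-semisimple analogue in the disconnected case). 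If you replace your induction by an appeal to this finiteness statement (or prove it along the lines just sketched), your proof of (c) is complete.
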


\begin{proof}
Since $G^\circ$ is connected, the $G^\circ$-orbits $C_1,C_2,D$ are irreducible
and then so is $C_1C_2$. Now $\dim C_1C_2\le \dim C_1+\dim C_2=\dim D$ and
$D\subset C_1C_2$ imply that $\dim C_1C_2=\dim D$. Thus, their closures agree.
Since any two dense subsets intersect non-trivially, $D$ is the unique dense
class in $C_1C_2$, so it must be open. The second statement now follows.
As the closure of any class consists of finitely many classes, we have~(c). 
\end{proof}

We now collect some actual examples:

\begin{exmp}   \label{ex:graph-trans}
Let $S=\SL_{2n}(k)$, $n\ge 2$, with $k$ an algebraically closed field.
Let $x \in S$ be a transvection, $y\in \Aut(S)$ a graph automorphism with
centralizer $\Sp_{2n}(k)$ and $G=\langle S,y\rangle$.
Let $U$ be a root subgroup containing $x$. Let $N=N_S(U)$. Since $\Sp_{2n}(k)$
has two orbits on pairs consisting of a $1$-space and a hyperplane containing
it we see that $|\Sp_{2n}(k)\backslash S/N| = 2$. Thus, $x^S \times y^S$
consists of two $S$-orbits. One orbit is the set of commuting pairs.
When $k$ has characteristic $p>0$, reducing to the case of $n=2$ shows that
in the other orbit the product will have order $2p$. In particular, if $p=2$,
$\langle x, y^g \rangle$ is either elementary abelian of order $4$ or a
dihedral group of order $8$ for all $g\in S$.
\end{exmp}

\begin{exmp}   \label{ex:ortho p=2}
Let $G=\GO_{2n}(k)$, $n\ge 4$, with $k$ an algebraically closed field of
characteristic~$2$. Then $G$ fixes a non-degenerate alternating form $(\,,\,)$
on $V=k^{2n}$, so we may view $G$ as a subgroup of the isometry group
$\Sp_{2n}(k)$ of this form. Let $x\in G$ be a transvection, and $y$ an
involution with $(yv,v)=0$ for all $v\in V$. Then by \cite[Ex.~6.6]{GMT},
the order of $xy$ is either~2 or~4, so $x,y$ always generate a 2-group.
\end{exmp}

\begin{exmp}   \label{exmp:D4}
Let $k$ be an algebraically closed field of characteristic~3 and
$G=\SO_8(k).3$, the extension of the simple algebraic group
$G^\circ=\SO_8(k)$ by a graph automorphism of order~3. Let $C_1$ be the class
of long root elements in $G^\circ$, with centralizer of type $3A_1$, and $C_2$
the class of the graph automorphism $\sg$ with centralizer $G_2(k)$ in
$G^\circ$. We claim that $C_1C_2$ consists of 3-elements. \par
Indeed, let $x_i(1)$, $1\le i\le 4$, denote long root elements for the four
simple roots, labelled so that the node in the centre of the Dynkin diagram
has label~2, and $\sg$ permutes $x_1(1),x_3(1),x_4(1)$ cyclically.
So $x:=x_2(1)\in C_1$, and it's easily seen that $y:=x_1(1)x_3(1)x_4(1)\sg$
is conjugate to $\sg$, so lies in $C_2$. The product
$xy=x_2(1)x_1(1)x_3(1)x_4(1)\sg$ lies in a class $D$, denoted $C_4$ in
\cite[I.3]{Spa}, of dimension~24 (see also the representative $u_4$ in
\cite[Tab.~8]{MDec}). Now $C_1$ has dimension~10 and $C_2$ has
dimension~14. Since $C_1$ is a single class under $G^\circ$-conjugation,
and $C_2,D$, being outer classes, necessarily are, Lemma~\ref{lem:dense}
shows that $D$ is dense in $C_1C_2$. \par
Since $\sg$ normalizes the maximal unipotent subgroup generated by the
standard positive root subgroups, $\langle x,y\rangle$ is unipotent for our
choice of elements $x\in C_1$, $y\in C_2$ above, hence for all choices. Our
claim follows.
\par
Note that $x_2(1)$ and $x_1(1)x_3(1)x_4(1)$ are long, respectively short root
elements in the centralizer $C_{G^\circ}(\sg)=G_2(k)$ of $\sg$, with product
a regular unipotent element. This thus leads to the example in $G_2(k)$ from
\cite[Ex.~6.1]{GMT}, and we see that in both cases, the generated groups in
the generic case agree (see also Proposition~\ref{prop:rel}).
\end{exmp}

\begin{exmp}   \label{exmp:E6}
Let $k$ be an algebraically closed field of characteristic~2 and $G=E_6(k).2$,
the extension of a simple algebraic group $G^\circ=E_6(k)$ of type
$E_6$ by a graph automorphism of order~2. Let $C_1$ be the class of long root
elements in $G^\circ$, with centralizer $U.A_5(k)$, where $U$ is unipotent
of dimension~21, and $C_2$ the class of the graph automorphism $\sg$ with
centralizer $F_4(k)$ in $G^\circ$. We claim that again $C_1C_2$ only contains
unipotent elements. \par
Indeed, let $x_i(1)$, $1\le i\le 6$, denote long root elements for the six
simple roots, labelled in the standard way (so that node 4 is at the centre
of the Dynkin diagram, and $\sg$ interchanges $x_3(1),x_5(1)$).
So $x:=x_4(1)\in C_1$, and it's easily seen that $y:=x_3(1)x_5(1)\sg$ is
conjugate to $\sg$, so lies in $C_2$. The product $xy=x_4(1)x_3(1)x_5(1)\sg$
lies in a class $D$, with representative $u_{15}$ in \cite[p.~160]{Spa},
of dimension~48 (see also \cite[Tab.~10]{MGreen}).
As $C_1$ has dimension~22 and $C_2$ has dimension 26, $D$ is dense in
$C_1C_2$ by Lemma~\ref{lem:dense}. Again, $\sg$ normalizes the maximal
unipotent subgroup generated by the positive root subgroups, so
$\langle x,y\rangle$ is unipotent, and hence by density this holds for all
pairs from $C_1\times C_2$. (Note that both classes have representatives in
a Levi subgroup of type $A_3.2$, where they correspond to
Example~\ref{ex:graph-trans}.)\par
As in the case of $\SO_8(k)$, we also recover the example in the connected
group $F_4(k)$ in characteristic~2 from \cite[Ex.~6.3]{GMT}, by considering
the centralizer $C_{G^\circ}(\sg)=F_4(k)$ of $\sg$. There, $x_4(1)$ is a long
root element, and $x_3(1)x_5(1)$ is a short root element. We see that the
group generated by them is isomorphic to the one in $G$.
\end{exmp}

We are now ready to prove the main result of this section. Throughout our
proofs the following result will make inductive arguments work (see
\cite[Thm.~1.2]{Gur07}):

\begin{prop}   \label{prop:hit}
 Let $G$ be reductive, $H\le G$ a closed subgroup, and $C,D$ conjugacy
 classes of $G$.
 \begin{enumerate}
  \item[\rm(a)]  If $(C \cap H)(D \cap H)$ hits infinitely many semisimple
   $H$-classes, then $CD$ hits infinitely many semisimple $G$-classes.
  \item[\rm(b)] If $H$ is reductive and $(C \cap H)(D \cap H)$ hits infinitely
   many $H$-classes, then $CD$ hits infinitely many $G$-classes.
 \end{enumerate}
\end{prop}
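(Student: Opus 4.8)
The plan is to deduce both statements from a single finiteness principle describing how one conjugacy class of $G$ can meet a subgroup, and then to finish with a pigeonhole argument. Concretely, the key step will be to prove: \emph{for any closed subgroup $H\le G$ and any semisimple $s\in G$, the intersection $s^G\cap H$ is a finite union of $H$-conjugacy classes, and if moreover $H$ is reductive then $g^G\cap H$ is a finite union of $H$-classes for every $g\in G$.} Granting this, part~(a) follows: if $(C\cap H)(D\cap H)$ contained semisimple elements $h_1,h_2,\ldots$ lying in infinitely many distinct $H$-classes while $CD$ met only finitely many semisimple $G$-classes, then, since $(C\cap H)(D\cap H)\subseteq CD$, infinitely many of the $h_i$ would lie in a single class $s^G$, contradicting that $s^G\cap H$ is a finite union of $H$-classes. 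Part~(b) is the identical argument using the second half of the principle, which applies because $H$ is assumed reductive (so it covers the non-semisimple elements of $(C\cap H)(D\cap H)$ as well).

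To prove the semisimple case of the principle I would first reduce to $H$ reductive. Over an algebraically closed field $H$ has a Levi decomposition $H=R_u(H)\rtimes L$ with $L$ reductive, and every semisimple element of $H$ is $H$-conjugate into $L$; since $s^G\cap H$ consists of semisimple elements, every $H$-class in it has a representative in $s^G\cap L$, so it suffices to bound the number of $L$-classes in $s^G\cap L$. Now the semisimple classes of the reductive group $L$ are parametrized by the adjoint quotient $L/\!\!/L\cong T_L/W_L$, where $T_L$ is a maximal torus of $L$ chosen inside a maximal torus $T$ of $G$, with respective Weyl groups $W_L$ and $W$. Every element of $s^G\cap L$ has the same image in $G/\!\!/G\cong T/W$, so they all lie in the fibre over that point of the natural morphism $T_L/W_L\to T/W$; and this morphism has finite fibres, simply because $T\to T/W$ has finite fibres (they are $W$-orbits), hence so does the composite $T_L\hookrightarrow T\to T/W$. (If $L$ is disconnected one argues coset-by-coset in the standard way.) This settles the semisimple case.

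For the second half, take $g\in G$ with Jordan decomposition $g=g_sg_u$ and let $h\in g^G\cap H$ with $h=h_sh_u$. The semisimple parts $h_s$ all lie in $g_s^G\cap H$, which by the first half falls into finitely many $H$-classes, so after conjugating we may assume $h_s$ equals one of finitely many fixed representatives $s_i$. Then $h_u$ is a unipotent element of $C_H(s_i)$, which is a reductive group (with finitely many components) and hence has only finitely many unipotent conjugacy classes; so there are only finitely many possibilities for $h$ up to $H$-conjugacy. Thus $g^G\cap H$ meets finitely many $H$-classes, which completes the principle and with it the proof (compare \cite[Thm.~1.2]{Gur07}).

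I expect the main obstacle to be the finiteness principle itself---above all the finite-fibre property of $T_L/W_L\to T/W$, and the bookkeeping needed to pass from $H$ to a reductive Levi factor and from arbitrary elements to their Jordan parts, invoking the standard fact that a reductive group has only finitely many unipotent classes. Once that is in place the deductions of (a) and (b) are a routine pigeonhole; it is worth noting that (a) uses only the semisimple half of the principle, which is exactly why no reductivity hypothesis on $H$ is needed there, whereas (b) requires the full statement.
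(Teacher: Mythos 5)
Your reduction of (a) and (b) to the ``finiteness principle'' is a correct pigeonhole, but note that this principle \emph{is} the statement of \cite[Thm.~1.2]{Gur07}, which is exactly what the paper cites for Proposition~\ref{prop:hit}; the paper gives no proof of its own. So everything hinges on your proof of the principle, and that is where there are genuine gaps.

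First, the opening reduction ``$H$ has a Levi decomposition $H=R_u(H)\rtimes L$'' is false over algebraically closed fields of characteristic $p>0$: already connected groups can fail to have a Levi factor (standard counterexamples come from Witt vectors, e.g.\ the algebraic group underlying $\SL_2$ of the length-two Witt ring). Since the applications in this paper live almost entirely in characteristic $2$ and $3$, this cannot be passed over. For semisimple elements of $H^\circ$ one can avoid Levi factors altogether (such an element lies in a maximal torus $S$ of $H^\circ$, and $s^G\cap S$ is finite because $S$ sits inside a maximal torus of $G^\circ$), but that fix exposes the second and more serious gap: the parenthetical ``if $L$ is disconnected one argues coset-by-coset in the standard way.'' The proposition is applied in this paper precisely to disconnected subgroups ($A_1(k)\wr 2$, $L.\fS_3$, \dots) and to outer classes, and for an outer coset the semisimple classes are \emph{not} parametrized by $T_L/W_L$, nor does your finite-fibre mechanism survive. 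Concretely, take $G=\SL_2(k)$, $p\ne 2$, $H=N_G(T)$ and $w$ a Weyl representative: every element of the coset $wT$ is semisimple and all of them are $G$-conjugate, so the whole one-dimensional ``torus parameter space'' of the coset lies in a single fibre. Finiteness of $H$-classes holds there for a different reason (the coset $wT$ is a single $T$-class, i.e.\ the correct parameter space is $T$ modulo the positive-dimensional subtorus $[w,T]$), and in general one still has to prove that only finitely many of these coset parameters can hit one $G$-class --- a norm-map argument in the spirit of the computations in Lemma~\ref{lem:bothout}, and exactly the content of \cite{Gur07}. As written, the outer case is asserted, not proved. (A smaller point: in (b) you also use that the possibly disconnected reductive group $C_H(s_i)$ has finitely many unipotent classes; this is true but is itself a nontrivial known result, due to Lusztig in the connected case and Spaltenstein in general.)
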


\begin{thm}   \label{thm:mainA}
 Let $G$ be an almost simple algebraic group of adjoint type over an
 algebraically closed
 field $k$ of characteristic~$p\ge0$ with connected component $G^\circ$.
 Let $C_1,C_2$ be non-trivial unipotent $G^\circ$-classes of
 $G=\langle C_1,C_2\rangle$ such that $xy$ is unipotent for all
 $x\in C_1$, $y\in C_2$. Then $p\in\{2,3\}$ and (up to order of $C_1,C_2$)
 either:
 \begin{enumerate}[\rm(1)]
 \item $G=G^\circ$ is connected, $C_1$ contains long root elements and one of:
  \begin{enumerate}
   \item[\rm(a)] $p=2$, $G= \Sp_{2n}(k) = \Sp(V)$ with $n \ge 2$, and $C_2$
    contains involutions $y$ with $(yv,v)=0$ for all $v$ in $V$; or
   \item[\rm(b)]  $(G,p)=(F_4,2)$ or $(G_2,3)$, and $C_2$ contains short
    root element; or
  \end{enumerate}
 \item $G$ is the extension of $G^\circ$ by a graph automorphism of the Dynkin
  diagram of order~$p$, $C_2$ is the class of this quasi-central automorphism,
  and one of:
  \begin{enumerate}
   \item[\rm(c)] $p=2$, $G=A_{2n-1}(k).2$ with $n\ge2$, $C_1$ consists of long
    root elements and $C_2$ is the class of the graph automorphism with
    centralizer $C_n(k)$;
   \item[\rm(d)] $p=2$, $G=D_n(k).2$, $C_1$ contains involutions as given in
    \textrm{(1)(a)} for $C_n(k)$ and $C_2$ consists of transvections (the
    class of graph automorphisms with centralizer $B_{n-1}(k)$);
   \item[\rm(e)] $p=2$, $G=E_6(k).2$, $C_1$ is the class of long root elements
    and $C_2$ is the class of the graph automorphism with centralizer $F_4(k)$;
    or
   \item[\rm(f)] $p=3$, $G=D_4(k).3$, $C_1$ is the class of long root elements
    and $C_2$ is the class of the graph automorphism with centralizer $G_2(k)$.
  \end{enumerate}
 \end{enumerate}
 Moreover, in all these cases, $\langle x,y\rangle$ is unipotent for all
 $x\in C_1$, $y\in C_2$.
\end{thm}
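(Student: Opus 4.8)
The plan is to reduce immediately to the disconnected case (the connected case being \cite[Thm.~1.1]{GMT}, which yields (1)(a),(b)), isolate the coset structure, and then run a short case analysis in four families of groups, pulling everything back to the small-rank Examples~\ref{ex:graph-trans}--\ref{exmp:E6}. So assume $G\ne G^\circ$. Since $G=\langle C_1,C_2\rangle$ and $G^\circ$ is simple, at least one class — say $C_2$ — lies outside $G^\circ$; as its elements are unipotent, the coset containing it contains unipotent elements, which forces it to consist of a graph automorphism of $G^\circ$ of order a power of $p$. Hence $(G^\circ,p)$ is one of $(A_m,2),(D_m,2),(E_6,2),(D_4,3)$, in particular $p\in\{2,3\}$, and one may take $\sigma\in C_2$ of order exactly $p$; the ``mixed'' possibility inside $D_4(k).\fS_3$ (with $C_1,C_2$ in cosets of order $2$ but $C_1C_2$ meeting a coset of order $3$, or vice versa) is excluded because such a coset then contains no unipotent element. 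I would also show that $C_1,C_2$ are not both outer: after the closure reduction below one may assume both are quasi-central, and then $C_1C_2$ contains a set of twisted commutators $\{\sigma^a g^{-1}\sigma^{-a}g\mid g\in G^\circ\}$ (up to a power of $\sigma$), which for $g$ a generic semisimple element of a $\sigma$-stable maximal torus is a nontrivial semisimple element — a contradiction. So from here on $C_1\subseteq G^\circ$ and $C_2$ lies in the coset $\sigma G^\circ$ of the graph automorphism.

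The next step is the reduction $C_2\rightsquigarrow C_2'$, where $C_2'$ is the quasi-central class of $\sigma G^\circ$, i.e.\ the unique closed unipotent class in that coset, which by Spaltenstein's Theorem~\ref{thm:Spa} lies in $\overline{C_2}$; since the unipotent variety is closed, $\overline{C_1C_2}\supseteq C_1C_2'$ also consists of unipotent elements, so the hypothesis passes to $(C_1,C_2')$. Fixing $\sigma\in C_2'$, the group $D:=C_{G^\circ}(\sigma)$ is connected reductive (\cite[Thm.~1.8]{DM94}, cf.\ the proof of Lemma~\ref{lem:quasi-semisimple}), of type $C_n$, $B_{n-1}$, $F_4$, $G_2$ in the four cases respectively. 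Using Proposition~\ref{prop:hit}(b) with suitable $\sigma$-stable reductive or Levi subgroups I would then cut the problem down to the base cases $\SL_{2n}(k).2$ (Example~\ref{ex:graph-trans}), $E_6(k).2$ through a Levi of type $A_3.2$ (Example~\ref{exmp:E6}), $\SO_8(k).3$ for $D_4(k).3$ (Example~\ref{exmp:D4}), and the symplectic/orthogonal picture for $D_n(k).2$ (Example~\ref{ex:ortho p=2}). Along the way one records, via the ``long root element of $D$ times short root element of $D$'' descriptions in those examples, that each disconnected example descends to a connected non-simply-laced example of \cite[Thm.~1.1]{GMT} inside $D$ — this is Proposition~\ref{prop:rel}.

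It then remains to carry out, in each base case, the finite check of which nontrivial unipotent $G^\circ$-class $C_1$ has $x\sigma$ unipotent for every $x\in C_1$ — equivalently, the norm $x\,\sigma(x)\cdots\sigma^{p-1}(x)$ unipotent for every $x\in C_1$ — and, crucially, that no outer unipotent class strictly above $C_2'$ survives the hypothesis; for this one uses dimension counting via Lemma~\ref{lem:dense} together with the equalities $\dim C_1+\dim C_2=\dim D_0$ verified in the examples ($D_0$ the class of the product of the distinguished representatives), together with the fact that the closure of $C_1C_2$ always contains a closed class, which by Corollary~\ref{cor:unique} can be unipotent only when $C_1,C_2$ are minimal enough — so enlarging one of the classes by a step in the closure order produces a product class whose centralizer fails to be reductive, contradicting Theorem~\ref{thm:closedclasses}. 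This pins $(C_1,C_2)$ down to (2)(c)--(f). I expect this to be the real obstacle, and in particular the characteristic-$2$ orthogonal group $D_n(k).2$: there ``long root elements'' must be replaced by the subtler class of Example~\ref{ex:ortho p=2}, one has to keep track of centralizers inside $\Sp_{2n}$, $\SO_{2n-1}$ and $\SO_{2n}$, and ruling out $A_{2n}(k).2$ (as opposed to $A_{2n-1}(k).2$) is part of the same bookkeeping.

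Finally, for the ``Moreover'' clause: the graph automorphism $\sigma$ normalizes a Borel subgroup $B=TU$ of $G^\circ$, so $U\langle\sigma\rangle$ is a unipotent subgroup of $G$, and a suitable pair $(x_0,y_0)\in C_1\times C_2$ can be chosen inside it, giving $\langle x_0,y_0\rangle$ unipotent. The set $Z:=\{(x,y)\in C_1\times C_2\mid\langle x,y\rangle\text{ is unipotent}\}$ is closed (an intersection over all group words $w$ of the preimages of the closed unipotent variety under the morphisms $(x,y)\mapsto w(x,y)$) and $G^\circ$-stable; since $x_0y_0$ lies in the class $D_0$, which by Lemma~\ref{lem:dense} is dense in $C_1C_2$, the orbit $G^\circ\cdot(x_0,y_0)$ has dimension $\dim D_0=\dim C_1+\dim C_2=\dim(C_1\times C_2)$ and is therefore dense in the irreducible variety $C_1\times C_2$. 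Hence $Z=C_1\times C_2$, as required; for (1)(a),(b) this is again part of \cite[Thm.~1.1]{GMT}.
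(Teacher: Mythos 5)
Your overall skeleton (reduce to the disconnected case via \cite[Thm.~1.1]{GMT}, pass to the closed quasi-central class in the outer coset via Theorem~\ref{thm:Spa}, verify the surviving pairs by Examples~\ref{ex:graph-trans}--\ref{exmp:E6}, and prove the ``Moreover'' clause by a closed-condition/dense-orbit argument) matches the paper. But there are two genuine gaps. First, your exclusion of the case where both classes are outer is wrong in exactly the subtle subcase. If $p=3$ and $C_1=C_2$ is the quasi-central class of a coset of order~$3$, the products you exhibit are $g^{-1}\sg g\sg=(g^{-1}\,{}^{\sg}g)\,\sg^2$, i.e.\ elements $t\sg^2$ with $t$ in the image of $g\mapsto g^{-1}\,{}^\sg g$ on a $\sg$-stable torus; these are \emph{not} semisimple elements of $G^\circ$ (they lie in the coset $\sg^2G^\circ$), and since the norm $1+\sg+\sg^2$ annihilates the image of $\sg-1$ one gets $(t\sg^2)^3=1$, so every element of this family is unipotent and no contradiction arises. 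The parenthetical ``up to a power of $\sg$'' is precisely where the argument breaks; the paper has to dispose of this case by an explicit computation in the finite group $\SO_8^+(3).3$, and your proposal has no substitute for it. (Your handling of the inverse-coset situation and of the mixed $D_4(k).\fS_3$ cosets is fine.)

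Second, the mechanism you propose for the classification proper does not work. Ruling out classes $C_2$ strictly above the quasi-central class, and all the wrong choices of $C_1$, is claimed via ``dimension counting via Lemma~\ref{lem:dense}'' plus Corollary~\ref{cor:unique} and Theorem~\ref{thm:closedclasses}: but Lemma~\ref{lem:dense} only shows that a class of the right dimension is dense in $C_1C_2$ (it is used to \emph{confirm} the positive examples), and there is no contradiction in ``a product class whose centralizer fails to be reductive'' --- product classes need not be closed, and the hypothesis only forces the unique closed class in the closure of $C_1C_2$ to be the quasi-central unipotent one, which is automatic. The actual content of the paper's proof is the concrete case analysis you defer: closure reductions to small-rank subgroups ($A_2$, $A_3$, $D_3=A_3$, $D_4$, the $A_5$ Levi in $E_6$), explicit structure-constant computations in the finite groups $\PGL_2(7)$, $\fS_8$, $\GO_8^+(2)$, $\SO_8^+(3).3$, and, for non-quasi-central $C_2$ in the $E_6$ and triality $D_4$ cases, writing a representative as $\tau x$ with $x$ a long root element in $C_{G^\circ}(\tau)=F_4(k)$ or $G_2(k)$ and invoking the connected-group classification there to produce a product of non-$p$-power order. (Also note Proposition~\ref{prop:hit} concerns the ``finitely many classes'' hypothesis; for the unipotence hypothesis the reduction to subgroups is the even simpler observation that unipotence of a product of representatives is intrinsic.) Finally, in the ``Moreover'' step the dimension equality $\dim C_1+\dim C_2=\dim D_0$ is only established in Examples~\ref{exmp:D4} and~\ref{exmp:E6}; for cases (2)(c),(d) the paper instead argues via the orbit count in Example~\ref{ex:graph-trans} and the order computation in Example~\ref{ex:ortho p=2}, so your uniform density argument would need those equalities supplied.
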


\begin{proof}
By \cite[Thm.~1.1]{GMT} we may assume that $G$ is disconnected and $C_2$, say,
lies in $G\setminus G^\circ$. So either $p=2$ and $G^\circ$ has type $A_n$
($n\ge 2$), $D_n$ ($n\ge 4$), or $E_6$, or $p=3$ and $G^\circ$ has type $D_4$.
By Examples~\ref{ex:graph-trans}--\ref{exmp:E6} above, all the cases described
in~(2)(c)--(2)(f) do give examples.

Suppose that $C_1$ also consists of outer elements.  By
Corollary~\ref{cor:unique} there is a unique outer unipotent class lying
in the closure of all other unipotent classes in a fixed coset. By taking
closures we may assume that $C_1,C_2$ are these. If $p=3$ and
$C_1=C_2$ then a computation in $\SO_8^+(3).3$ shows that the product contains
non-unipotent classes. Else, $C_2$ contains the inverses of $C_1$, and since
outer quasi-semisimple elements normalize, but do not centralize, a
maximal torus of $G^\circ$, we obtain non-trivial semisimple products.

So we may assume that $C_1 \subset G^\circ$. Suppose for the moment that $p=2$.

First consider the case that $G^\circ = A_n(k)$, $n\ge 2$. Suppose that
$C_1$ does not consist of transvections. Then by taking closures, we may
assume that $C_1$ either consists of elements with one Jordan block of size~$3$
or two Jordan blocks of size~$2$, and $C_2$ consists of quasi-semisimple
elements. Thus, it suffices to work in $A_2(k)$ or $A_3(k)$. Note that all
inner unipotent classes and all classes of involutory graph automorphisms
intersect $A_2(2).2 \cong \PGL_2(7)$ resp. $A_3(2).2 \cong \fS_8$ and we
obtain a contradiction. So $C_1$ consists of transvections. If $n$ is even,
we may assume that $C_2$ contains quasi-semisimple elements and reduce to
the case of $A_2(k).2$. Computing in $A_2(2).2$, we see that there are no
examples. If $n\ge 3$ is odd, by \cite{Spa} all outer unipotent classes except
for the quasi-semisimple one, contain the transpose inverse graph automorphism
in their closure. So we may reduce to the case of $A_3(2).2$ to see that
$C_2$ must correspond to transpositions in the symmetric group and so must
be the class of the graph automorphisms with centralizer $C_n(k)$, the
quasi-semisimple class. We thus arrive at case~(2)(c).
 
Next consider the case that $G^\circ = D_n(k)$. Suppose that $C_2$ does
not consist of transvections in the natural $2n$-dimensional representation
of $D_n(k).2$.  By passing to closures, we may
assume that $C_2$ contains elements with exactly three nontrivial Jordan
blocks of size $2$ in the natural $2n$-dimensional representation. In
particular, we see that elements of $C_2$ leave invariant a nondegenerate
$6$-dimensional space.  Passing to the closure, we may assume that $C_1$
consists of long root elements.  In particular, we can reduce to the case of
$D_3(k) =A_3(k)$ for which we already saw that there are no such examples.

So we may assume that $C_2$ consists of transvections.  We claim that $C_1$
contains involutions $x$ so that $(xv,v)=0$ for all $v \in V$, the natural
$2n$-dimensional module for $C_n(k) \ge D_n(k).2$ as in (2)(d).
If $C_1$ consists of involutions which do not have that property, then any
$x \in C_1$ leaves invariant a nondegenerate $2$-dimensional space on which
it acts as a transvection. Thus, $xy$ ($y\in C_2$) can have odd order on that
$2$-dimensional space.

If $C_1$ does not consist of involutions, then going to the closure allows us
to assume that $C_1$ contains elements of order $4$.  It follows
that any $x \in C_1$ leaves invariant a nondegenerate subspace of dimension~$6$
or $8$ (acting as an element of order $4$).  Thus, we can reduce to either
$D_3(k)=A_3(k)$ or $D_4(k)$.  In the first case, we are done by the result
for type $A$. In the second case, we note that every class of elements of
order~$4$ in $\SO_8(k)$ intersects $\SO_8^+(2)$ and so we may compute structure
constants in $\GO_8^+(2)$ to conclude that there are no such examples.

Now suppose that $G^\circ=E_6(k)$. First suppose that elements of $C_2$ do not
have centralizer $F_4(k)$. By taking closures, we may assume that $C_1$
consists of long root elements in $E_6(k)$. By \cite[p.~250]{Spa} all outer
unipotent classes except for the quasi-semisimple one contain the graph
automorphism with centralizer $C_4(k)$ in their closure. Let $\tau$
be an involution with centralizer $F_4(k)$. Let $x \in F_4(k)$ be a long
root element in $E_6(k)$ such that $\tau x \in C_2$. Then by the result for
the connected group $F_4(k)$ there is $y\in C_1\cap F_4(k)$ with $xy$ of odd
order.

So we may assume that elements of $C_2$ have centralizer $F_4(k)$. If $C_1$
consists of long root elements then we are in case (2)(e). Else, by taking
closures, we may assume that $C_1$ is the class $2A_1$ \cite[p.~247]{Spa}.
This class has representatives in the Levi subgroup $A_5(k)$ and so the
result follows by that case.

Finally, consider the case $p=3$ with $G^\circ = D_4(k)$. Suppose that
elements of $C_2$ do not have centralizer $G_2(k)$.  By taking closures, we
may assume that $C_1$ consists of long root elements. Let $\tau$ be the graph
automorphism with centralizer $G_2(k)$. Choose $x, y \in G_2(k)$ which are
long root elements of $D_4(k)$ such that $xy$ is not a $3$-element (by the
result for connected groups) and $\tau x \in C_2$.
Then $\tau x y$ is not a $3$-element either. So we may assume that
elements of $C_2$ have centralizer $G_2(k)$. Note that the graph automorphism
fuses the three non-trivial unipotent classes of $G^\circ$ of second smallest
dimension (see \cite[p.~239]{Spa}), so if $C_1$ is not as in case~(2)(f),
then by closure we may assume that $C_1$ is one of these classes. But
computation of structure constants in the finite subgroup $\SO_8^+(3).3$
shows that this does not give an example.

Note that in all the exceptions in the statement, the group generated
by $x\in C_1$, $y\in C_2$ is always unipotent, by
Examples~\ref{ex:graph-trans}--\ref{exmp:E6}, resp. by the examples in
\cite{GMT}. This completes the proof of Theorem~\ref{thm:mainA}.
\end{proof}

\section{Products of conjugacy classes in disconnected algebraic groups}
\label{sec:products}

In \cite[Thm.~1.1]{GMT} we classified pairs of conjugacy classes $C, D$ in a
simple algebraic group such that the product $CD$ consists of a finite union
of conjugacy classes (equivalently, the semisimple parts of elements in $CD$
form a single conjugacy class). We build on the results from the previous
section to extend this to the disconnected case.

First we give some examples.

\begin{exmp}   \label{exmp:graph-refl}
Let $S=\SL_{2n}(k)$, $n\ge2$, with $k$ algebraically closed. Let $x\in S$ be
any element that is (up to scalar) a pseudoreflection, $y$ a graph automorphism
with centralizer $\Sp_{2n}(k)$ and $G=\langle S,y\rangle$. Then by
\cite[Ex.~7.2]{GMT}, there is a single $G$-orbit on $x^S \times y^S$, whence
$x^Sy^S$ is a single conjugacy class. In particular, if $k$ is not of
characteristic~2, and $x$ is a 2-element, $\langle x,y \rangle$ is a $2$-group.
\end{exmp}

\begin{exmp}   \label{exmp:GO}
Let $G=\GO_{2n}(k)$, $n \ge 4$, with $k$ algebraically closed. Suppose that
$x\in G$ is an element whose centralizer is $\GL_n(k)$ and $y\in G$ is a
reflection. As $\GL_n(k)$ acts transitively on non-degenerate 1-spaces of the
natural module for $G$, we have $G= \GO_{2n-1}(k) \GL_n(k)$ and so $G$ has a
single orbit on $x^G \times y^G$. If $k$ is not of characteristic~2 and
$x$ has finite order $m$, then $xy^g$ will have order $2m/(2,m)$.
\end{exmp}

\begin{exmp}   \label{exmp:GOtrans}
Let $G=\GO_{2n}(k)$, $n\ge 4$, with $k$ algebraically closed of characteristic
not $2$. Suppose that $x$ is unipotent with all Jordan blocks of size at most~2
and $y$ is a reflection in $G$. Then $x$ and $y$ are both trivial on a totally
singular subspace of dimension~$n-1$ of the natural module. Let $P$ denote
the parabolic subgroup stabilizing this space. Then $x$ lies in the unipotent
radical of $P$, whence $xy$ has constant semisimple part $y$. Thus we get an
example with $C_1=x^G$, $C_2=y^G$.
\end{exmp}

\begin{exmp}   \label{exmp:D4andE6}
Let $G=\SO_8(k).3$ with $k$ algebraically closed of characteristic not $3$.
Let $C_1$ be the class of root elements of $G^\circ$, and $y\in C_2$,
the class of graph automorphisms with centralizer $H=G_2(k)$ in $G^\circ$. Let
$s\in H$ be an element of order~3 with $C_H(s)=\SL_3(k)$. By looking at the
centralizer in the adjoint action on the Lie algebra of $G^\circ$ one sees that
$sy$ is $G$-conjugate to $y$. Note that $C_1\cap H$ are long root elements in
$H$. Thus by \cite[Ex.~6.2]{GMT} the product $(C_1\cap H)s$ contains an element
$us$ with $u$ regular unipotent in $C_H(s)=\SL_3(k)$, with centralizer
dimension~4 in $H$.
Thus $xsy\sim usy$ for some $x\in C_1$, with $[u,sy]=1$ and of coprime order.
It follows that $\dim C_G(xsy)=\dim C_G(usy)=\dim C_H(u)=4$, so that $C_1C_2$
contains a class of dimension $\dim G-4=24=\dim C_1+\dim C_2$. An application of
Lemma~\ref{lem:dense} shows that $C_1C_2$ is a union of finitely many classes.
\par
Similarly, let $G=E_6(k).2$ with $k$ algebraically closed of characteristic
not~$2$. Let $C_1$ be the class of root elements of $G^\circ$ and
$y\in C_2$, the class of graph automorphisms with centralizer $H=F_4(k)$
in $G$. Let $s\in H$ be an involution with $C_H(s)=B_4(k)$. By looking at the
adjoint action one sees that $sy$ is $G$-conjugate to $y$. Again, $C_1\cap H$
consists of long root elements in $H$. Thus by \cite[Ex.~6.4]{GMT} the
product $(C_1\cap H)s$ contains elements $u\in B_4(k)$ whose square is a short
root element, with centralizer dimension~30 in $H$. Arguing as before, we see
that $C_1C_2$ contains a class of dimension $\dim G-30=48=\dim C_1+\dim C_2$,
and we
conclude by Lemma~\ref{lem:dense} that $C_1C_2$ is a finite union of classes.
\end{exmp}

\begin{exmp}   \label{exmp:D4.S3}
Let $G=\SO_8(k).\fS_3$ with $k$ algebraically closed. Let $C_1,C_2$ be
$G^\circ$-classes of graph automorphisms with centralizer $H=B_3(k)$ in
$G^\circ$ lying in two different $G^\circ$-cosets. Then $C_1,C_2$ both have
dimension~7, while any class in the coset of order~3 has dimension at
least~14, which is attained for the quasi-central class of graph automorphisms
of order~3. So the product consists of that single class by
Lemma~\ref{lem:dense}. See Section~\ref{sec:D4.S3} for more on this.
\end{exmp}

\begin{thm}   \label{thm:mainB}
 Let $G$ be an almost simple algebraic group of adjoint type over an
 algebraically closed
 field $k$ of characteristic $p\ge 0$ with connected component $G^\circ$.
 Let $C_1$ and $C_2$ be nontrivial $G^\circ$-conjugacy classes in
 $G = \langle C_1, C_2 \rangle$. Suppose that $C_1C_2$ consists of finitely
 many conjugacy classes. Then one of the following holds:
 \begin{enumerate}[\rm(1)]
  \item $C_1,C_2$ are unipotent classes and we are in one of the cases of
   Theorem~\ref{thm:mainA}; or
  \item $G=G^\circ$ is connected, $C_2$ is unipotent, and one of
  \begin{enumerate}
   \item[\rm(a)] $G= \Sp_{2n}(k)$, $n\ge2$, $p\ne2$, $C_1$ contains
    involutions and $C_2$ contains long root elements;
   \item[\rm(b)] $G=\SO_{2n+1}(k)$, $n\ge2$, $p\ne2$, $C_1$ contains
    reflections (modulo scalars) and $C_2$ contains unipotent elements
    with all Jordan blocks of size at most $2$;
   \item[\rm(c)]  $G=F_4$, $p\ne2$, $C_1$ contains involutions with
    centralizer $B_4(k)$ and $C_2$ contains long root elements;
   \item[\rm(d)]  $G=G_2$, $p\ne3$, $C_1$ contains elements of order $3$ with
    centralizer $\SL_3(k)$ and $C_2$ contains long root elements; or
  \end{enumerate}
  \item $G$ is disconnected, $C_2$ contains quasi-semisimple graph
   automorphisms, and one of
  \begin{enumerate}
  \item[\rm(e)]  $[G:G^\circ]=2$, $G^\circ = A_{2m-1}(k)$, $m>1$,
   $C_1$ contains (semisimple) pseudo-reflections and $C_2$ contains graph
   automorphisms with centralizer $C_m(k)$;
  \item[\rm(f)]  $[G:G^\circ]=2$, $G^\circ = A_{2m-1}(k)$ with $p \ne 2$, $m>1$,
   $C_1$ contains transvections and $C_2$ contains graph automorphisms with
   centralizer $C_m(k)$;
  \item[\rm(g)]  $[G:G^\circ]=2$, $G^\circ = D_n(k)$, $n \ge 4$,
   $C_1$ contains semisimple elements with centralizer $A_{n-1}(k)$ and
   $C_2$ contains graph automorphisms with centralizer $B_{n-1}(k)$;
  \item[\rm(h)]  $[G:G^\circ]=2$, $G^\circ = D_n(k)$ with $p \ne 2$, $n\ge4$,
   $C_1$ contains unipotent elements with all Jordan blocks of size at most~2
   and $C_2$ contains graph automorphisms with centralizer $B_{n-1}(k)$;
  \item[\rm(i)]  $[G:G^\circ]=2$, $G^\circ = E_6(k)$ with $p \ne 2$,
   $C_1$ contains long root elements and $C_2$ contains graph automorphisms
   with centralizer $F_4(k)$;
  \item[\rm(j)]  $[G:G^\circ]=3$, $G^\circ = D_4(k)$ with $p \ne 3$,
   $C_1$ contains long root elements and $C_2$ contains graph automorphisms
   with centralizer $G_2(k)$; or
  \item[\rm(k)]  $[G:G^\circ]=6$, $G^\circ = D_4(k)$, $C_1$ and $C_2$ are
   classes in different cosets modulo $G^\circ$ of graph automorphisms of
   order~2. 
  \end{enumerate}
 \end{enumerate}
\end{thm}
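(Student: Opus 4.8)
The plan is to reduce the classification to a case-by-case analysis driven by the structure of $G$ relative to $G^\circ$, using the previously established unipotent case (Theorem~\ref{thm:mainA}), the results on closed classes from Section~\ref{sec:closed}, and the connected case \cite[Thm.~1.1]{GMT}. First I would recall the basic dichotomy: since $C_1C_2$ consists of finitely many conjugacy classes, the semisimple parts of the elements of $C_1C_2$ lie in a single $G^\circ$-conjugacy class (this is the characterization from \cite{GMT} in the connected case, and it passes to our setting by considering the projection to $G/G^\circ$). In particular, applying Proposition~\ref{prop:hit} with various reductive subgroups $H$ lets us propagate the finiteness condition up and down between $G$ and its Levi and centralizer subgroups, so that whenever we can exhibit a subpair $(C_1\cap H)(C_2\cap H)$ hitting infinitely many classes we obtain a contradiction. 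The first main split is whether $G$ is connected. If $G=G^\circ$ is connected we invoke \cite[Thm.~1.1]{GMT} directly, which yields cases (1) (when both classes are unipotent) and (2)(a)--(2)(d). So the substance of the proof is the disconnected case, where at least one of $C_1,C_2$ — say $C_2$ — is outer.

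Next I would analyze the semisimple part $s$ of a generic element of $C_1C_2$. Writing the generic product $g = xy$ with $x\in C_1$, $y\in C_2$, its semisimple part $s$ lies in the coset $xG^\circ \cdot yG^\circ$, an outer coset of $G^\circ$. By Corollary~\ref{cor:unique}, in each coset there is a unique closed class with prescribed semisimple part, and by Corollary~\ref{cor:coprime} together with Lemma~\ref{lem:quasi-semisimple}, quasi-semisimplicity controls which outer classes can arise; in particular the closed outer classes are exactly the quasi-semisimple ones. The key structural observation is that if $C_2$ is outer and $C_1C_2$ is finite, then (as announced in the introduction) one of the classes must contain a quasi-central element in its closure — here I would argue that $C_2$ itself must be (the closure of) the quasi-semisimple graph-automorphism class, because an outer quasi-semisimple element that is not quasi-central normalizes but does not centralize a maximal torus of $G^\circ$, forcing infinitely many distinct semisimple products, a contradiction with finiteness. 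This pins $C_2$ down to a graph-automorphism class with a reductive centralizer $H = C_{G^\circ}(y)^\circ$: type $C_m$ inside $A_{2m-1}$, type $B_{n-1}$ inside $D_n$, type $F_4$ inside $E_6$, type $G_2$ inside $D_4$ (order $3$), and for $G^\circ=D_4$ with $[G:G^\circ]=6$ the two cosets of order-$2$ graph automorphisms with centralizer $B_3$.

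With $C_2$ fixed as a quasi-semisimple outer class with centralizer $H$, the finiteness of $C_1C_2$ forces, via a standard transitivity/dimension count (this is the mechanism illustrated in Examples~\ref{exmp:graph-refl}--\ref{exmp:D4.S3}), that $H$ acts with only finitely many orbits on the relevant configuration, equivalently $\dim C_1 + \dim C_2 = \dim D$ for a dense class $D\subseteq C_1C_2$, so that Lemma~\ref{lem:dense} applies. Intersecting with $H$ (and using Proposition~\ref{prop:hit}(b), since $H$ is reductive) reduces the determination of $C_1$ to the \emph{connected} problem for the pair $(C_1\cap H)$, (a long-root or reflection-type class in $H$): this is exactly where cases (2)(a)--(2)(d) of the already-classified connected situation get transported into cases (3)(e)--(3)(j), matching $\Sp$ in $A$, $\SO_{\mathrm{odd}}$ in $D$, $F_4$ in $E_6$, $G_2$ in $D_4$. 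For each ambient type I would then go through the possible $C_1$'s by passing to closures (so that it suffices to rule out the minimal non-example above the listed class) and reduce to small-rank or finite subgroups — $A_2(2).2\cong\PGL_2(7)$, $A_3(2).2\cong\fS_8$, $\GO_8^\pm(2)$, $\SO_8^+(3).3$ — exactly as in the proof of Theorem~\ref{thm:mainA}, to eliminate everything outside the stated list. The case $[G:G^\circ]=6$, $G^\circ=D_4$ with both $C_1,C_2$ outer of order $2$ (case (3)(k)) is handled separately by the dimension count in Example~\ref{exmp:D4.S3}, and the remaining subtlety — ruling out $C_1,C_2$ both outer in an order-$2$ or order-$3$ coset, which I would dispose of by the ``normalizes but does not centralize a torus'' argument plus an explicit check in $\SO_8^+(3).3$ when $C_1=C_2$ — mirrors the opening reductions in Theorem~\ref{thm:mainA}.

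\textbf{Main obstacle.} The hard part will be the bookkeeping in the ambient types $D_n$ and $E_6$: showing that once $C_2$ is the quasi-semisimple graph automorphism, \emph{no} $C_1$ other than those listed (reflections/transvections in $A$, Jordan-block-$\le 2$ unipotents or $A_{n-1}$-type semisimple elements in $D_n$, long root elements in $E_6$) can keep $C_1C_2$ finite. This requires, for each type, identifying the invariant nondegenerate subspace on which a non-example class acts nontrivially, reducing to a small classical group or to a finite group where structure constants can be computed, and checking that a product of odd order (or a second semisimple part) appears — the same kind of argument as in Theorem~\ref{thm:mainA} but now with $C_1$ allowed to be non-unipotent, so one must simultaneously track the semisimple and unipotent parts of $C_1$. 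I expect the $p\ne 2$ versus $p = 2$ case distinctions in type $D$ (reflections versus transvections, $\SO$ versus $\Sp$ embeddings) to be the most delicate point.
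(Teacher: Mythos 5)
Your overall skeleton (connected case via \cite{GMT}, passing to closures, reduction to small-rank subgroups and finite computations, separate treatment of $D_4(k).\fS_3$) matches the paper, but the two steps carrying the real weight are not justified. Your ``key structural observation'' --- that $C_2$ must be the quasi-semisimple graph-automorphism class because an outer quasi-semisimple element which is not quasi-central ``normalizes but does not centralize a maximal torus, forcing infinitely many distinct semisimple products'' --- proves too much: the quasi-central graph automorphisms themselves normalize and do not centralize a maximal torus, yet they occur in the genuine examples (e)--(k). When $C_1$ is inner, the products $x_1\cdot yt$ with $t\in[y,T]$ lie in an outer coset and there is no direct reason why their quasi-semisimple parts should fall into infinitely many classes; the introduction of the paper even states explicitly that no a priori proof of the ``quasi-central in the closure'' phenomenon is known. (The normalize-but-not-centralize argument is used in the paper only in Lemma~\ref{lem:bothout}, where \emph{both} classes are outer and the product lands back in $G^\circ$, so that $xyS\subset T$.) What the paper actually does to pin down $C_2$ is type-by-type work: in $A_{2m-1}(k)$ the outer involutions with centralizer $D_m(k)$, and all non-involutive quasi-semisimple outer classes, are excluded by reducing to graph-stable Levi subgroups of type $A_{n-1}$, $A_3$, $A_2$ and to the wreath product $A_1(k)\wr 2$ (Lemma~\ref{lem:wreath}); in $E_6(k).2$ the $C_4(k)$-centralizer involutions are excluded by a fusion argument comparing composition factors of $A_5(k)$, $D_3(k)$ and $F_4(k)$ on the $27$-dimensional module; in $D_4(k).3$ one reduces to the parabolic of type $A_1^3$ and to $G_2(k)$.

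Second, your proposed determination of $C_1$ by ``intersecting with $H=C_{G^\circ}(y)$ and applying Proposition~\ref{prop:hit}(b)'' does not work as stated, because $C_2\cap H=\emptyset$ ($C_2$ is outer while $H\le G^\circ$); and Proposition~\ref{prop:rel} only transports examples from $G$ to $H$, only for the quasi-central class $C=\tau^G$, and does not classify the admissible partners $C_1$. Indeed case (g) ($C_1$ semisimple with centralizer $A_{n-1}(k)$, $C_2$ reflections) is precisely an example that this mechanism misses --- the paper notes that Proposition~\ref{prop:rel} applies to (e), (f), (h), (i), (j) but not to (g). The actual elimination of all other candidates for $C_1$ (including mixed classes, semisimple classes with several nontrivial eigenvalues, and unipotent classes with larger Jordan blocks) proceeds by closure arguments combined with Lemma~\ref{lem:wreath} and reductions to $D_3(k)=A_3(k)$, $\GO_4(k)=A_1(k)\wr 2$, the $A_1^3$ Levi in $D_4$, and explicit finite-group computations; your outline gives no substitute for these arguments, so the classification of $C_1$ remains open in your approach.
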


Note that cases (f), (h), (i) and~(j) are direct analogues of the unipotent
pairs in Theorem~\ref{thm:mainA}. We analyze the case 3(k) completely in
Section \ref{sec:D4.S3}, see Proposition~\ref{prop:D4.S3}. If we consider
$G$-classes instead of $G^\circ$-classes, we get the same examples except for
the ones in~3(k).

The proof of Theorem~\ref{thm:mainB} will be given in a series of five
lemmas. First note that if $G$ is connected, the result was shown in
\cite[Thm.~1.1]{GMT}. So we may and will assume that $G^\circ$ is one of
$A_n(k)$ ($n \ge 2$), $D_n(k)$ ($n \ge 4$), or $E_6(k)$. Let $r=[G:G^\circ]$.
We may assume that $C_2$ consists of outer automorphisms.
We consider various cases.

\begin{lem}   \label{lem:bothout}
 The claim in Theorem~\ref{thm:mainB} holds when neither of the classes
 $C_1,C_2$ is contained in $G^\circ$.
\end{lem}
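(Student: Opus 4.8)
The plan is to reduce, via Corollary~\ref{cor:unique}, to the situation where $C_1$ and $C_2$ are the \emph{closed} (i.e.\ quasi-semisimple) classes of lowest dimension in their respective cosets, and then to exploit the fact that quasi-semisimple outer automorphisms normalize but do not centralize a maximal torus of $G^\circ$. First I would observe that if $C_1C_2$ is a finite union of $G^\circ$-classes, then so is $\bar C_1\bar C_2$ after replacing $C_1,C_2$ by classes in their closures; in particular it suffices to treat the case where both $C_i$ are the unique closed unipotent classes in their cosets (when $p$ divides the relevant part of $[G:G^\circ]$), or semisimple classes otherwise. When both cosets are the \emph{same} coset and $r=[G:G^\circ]>2$, the two factors multiply into a coset of order $>1$, and I would argue using Lemma~\ref{lem:dense} (dimension count) together with an explicit structure-constant computation in a suitable finite group---$\SO_8^+(3).3$ for the $p=3$, $D_4$ case, and the analogous finite groups otherwise---to see the product meets infinitely many classes, \emph{except} precisely in the configuration of case~3(k): two classes of involutory graph automorphisms of $D_4(k)$ in different cosets, where Example~\ref{exmp:D4.S3} already exhibits that $C_1C_2$ is a single class.

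The core of the argument is the case where $C_1,C_2$ lie in cosets whose product is (a coset mapping to) a nontrivial element of $G/G^\circ$ of order prime to $p$, or more generally where $x_1x_2$ can be taken quasi-semisimple. Here I would take representatives $x_1\in C_1$, $x_2\in C_2$ both normalizing a common maximal torus $T\le B$ of $G^\circ$ (possible since each is quasi-semisimple and the closed classes in a coset are unique up to conjugacy, so after conjugating we may put both in $N_G(T)\cap N_G(B)$). Then $x_1x_2$ also normalizes $T$, and I claim that by varying $x_2$ within $T\cdot C_2$ (legitimate since $T$-translates of a quasi-central element stay in the same coset and, by Lemma~\ref{lem:quasi-semisimple} applied to the connected centralizer, many remain quasi-semisimple) the semisimple part of $x_1x_2$ ranges over infinitely many $T$-classes, hence infinitely many $G^\circ$-classes---unless the torus on which $x_1x_2$ acts has dimension~$0$ or~$1$. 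A rank count then forces $G^\circ$ to have very small rank, reducing to $A_1,A_2,A_3,D_4$ with small graph automorphisms, which are finite-group computations (in $\PGL_2(7)$, $\fS_8$, $\SO_8^+(3).3$, etc.) exactly as in the proof of Theorem~\ref{thm:mainA}.

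The main obstacle I anticipate is the bookkeeping for $D_4(k)$ with its full $\fS_3$ of graph automorphisms: there are several cosets to consider (two triality-type cosets of order~$3$ and three of order~$2$), the products interact, and one must separate the genuine example in case~3(k) from the infinitely-many-classes verdict in all the neighbouring configurations. I would handle this by a case division on the images of the cosets of $C_1,C_2$ in $\fS_3$: if their product is trivial we are back inside $G^\circ$ (excluded by hypothesis); if it has order~$3$, a dimension estimate via Lemma~\ref{lem:dense} combined with one computation in $\SO_8^+(3).3$ rules out finiteness (the minimal class in an order-$3$ coset has dimension $14$, too small to absorb the product of two $7$-dimensional classes unless they commute, which they do not generically); and if it has order~$2$, either $C_1,C_2$ lie in the \emph{same} order-$2$ coset---where the product lands in an order-$2$ or order-$3$ coset and again one computes---or in \emph{different} order-$2$ cosets, which is precisely case~3(k) and is disposed of by Example~\ref{exmp:D4.S3} and Proposition~\ref{prop:D4.S3}. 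The remaining groups $A_n.2$, $D_n.2$ ($n>4$) and $E_6.2$ are easier: there is a single outer coset, so $C_1C_2\subseteq G^\circ$, the product of two quasi-semisimple outer elements, and the torus argument above applies directly to give infinitely many semisimple products whenever $\mathrm{rk}\,G^\circ\ge 2$, which always holds here.
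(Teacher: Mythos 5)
Your opening reduction (pass to closures, hence to the closed, i.e.\ quasi-semisimple, classes in the two cosets, and put representatives in the normalizer of a maximal torus $T$ of a common Borel subgroup) is the same first step as the paper's, but the engine you propose for producing infinitely many classes does not work. You vary $x_2$ by torus \emph{translates} $tx_2$, $t\in T$, and justify this by saying the translates stay in the outer coset and (via Lemma~\ref{lem:quasi-semisimple}) many remain quasi-semisimple. Quasi-semisimplicity does not put $tx_2$ back into $C_2$: an outer coset generally contains several distinct quasi-semisimple classes (e.g.\ graph involutions with centralizer $F_4(k)$ versus $C_4(k)$ in $E_6(k).2$), so the products $x_1tx_2$ say nothing about $C_1C_2$. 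What is needed is the set of $T$-\emph{conjugates}: $x_2^t=x_2[x_2,t]\in C_2$, and $[x_2,T]$ is infinite because an outer quasi-semisimple element normalizes but never centralizes $T$; one then studies $x_1x_2[x_2,T]$. Moreover, when the product coset is nontrivial (only possible for $D_4(k).\fS_3$) one cannot simply invoke ``infinitely many semisimple parts'': one has to pass to the power $(x_1x_2t)^m\in G^\circ$, $m$ the order of $x_1x_2$ modulo $G^\circ$, and rule out the degenerate possibility that $t\mapsto z^{m}t^{z^{m-1}}\cdots t^zt$ has finite image (for $m=2$ this happens precisely when $z$ acts as $-1$ on the relevant subtorus, which must be excluded by an explicit root-data computation). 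Your rank-count reduction to small groups does not detect this, and finite-group structure-constant computations cannot by themselves certify that infinitely many classes are met.

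The $D_4(k).\fS_3$ bookkeeping, which is the entire content of the lemma, is also internally inconsistent in your plan. If $C_1,C_2$ lie in the two \emph{different} order-$2$ cosets, their product lies in an order-$3$ coset; that is exactly case 3(k), and there $\dim C_1+\dim C_2=7+7=14$ equals the minimal class dimension in that coset, so Lemma~\ref{lem:dense} gives \emph{finiteness} (a single class, Example~\ref{exmp:D4.S3}) --- your dimension estimate claims this configuration ``rules out finiteness'' while you simultaneously cite it as case 3(k). If $C_1,C_2$ lie in the \emph{same} order-$2$ coset, the product lies in $G^\circ$, not in ``an order-$2$ or order-$3$ coset''; and the situation where the product of the coset images is trivial is not ``excluded by hypothesis'' (the hypothesis only excludes $C_1,C_2\subseteq G^\circ$): this is the main case, covering the same-coset configurations in $D_4(k).\fS_3$ as well as all of $A_n(k).2$, $D_n(k).2$, $E_6(k).2$, and it is exactly where the torus argument must be applied. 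Finally, the mixed configuration with one class in an order-$2$ coset and one in an order-$3$ coset (product of order $2$, the delicate case in the paper) is absent from your case division altogether. As written, the proposal therefore has genuine gaps both in its central infinitude mechanism and in the case analysis that isolates 3(k).
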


\begin{proof}
Passing to closures, we may first assume by Theorem~\ref{thm:closedclasses}
that $C_1,C_2$ are both quasi-semisimple. Then there exist
$(x,y)\in C_1\times C_2$ normalizing a common Borel subgroup $B$ and a
maximal torus $T \le B$. Let $S:=[y,T]$. Then $yS\subseteq C_2$ and $S$ is
infinite as $y\notin C_G(T)=C_{G^\circ}(T)$. Let $m$ denote the order of
$z=xy$ in $G/G^\circ$. If $m=1$ then $xyS\subset T\cap C_1C_2$ is infinite,
so contains infinitely many classes.
\par
If $m>1$ then necessarily $G^\circ=D_4(k)$. If $m=2$ then $(xyt)^2=z^2t^zt$,
for $t\in S$, is finite only if $z$ acts as $-1$ on $S$. Note that this
configuration can only occur when $C_1$,$C_2$ lie in cosets of $G^\circ$ of
order~2, 3 respectively. Explicit computation shows that here $z$ does not act
as $-1$ on $S$.\par
Thus we have that $m=3$. If $C_1,C_2$ lie in the same coset of order~3, then
$(xyt)^3=\{z^3t^{z^2}t^zt\mid t\in S\}$. Again, explicit computation shows
that this is not finite for suitable $N_G(T)$-conjugates of $x,y$. The only
remaining possibility is that $G/G^\circ\cong\fS_3$, $x,y$ lie in two
different cosets of order~2, as in (3)(k) of the conclusion.
\end{proof}

\begin{lem}   \label{lem:wreath}
 Let $G=A_1(k)\wr Z_r$, the wreath product of $A_1(k)$ with the cyclic group
 of prime order~$r$. Let $C_1\subset G^\circ$ be a conjugacy class whose
 projection to at least two factors is non-central, and
 $C_2\subset G\setminus G^\circ$ any conjugacy class. Then:
 \begin{enumerate}
  \item[\rm(a)] $C_1C_2$ consists of infinitely many conjugacy classes.
  \item[\rm(b)] $[C_1,C_2]$ consists of infinitely many conjugacy classes.
 \end{enumerate}
\end{lem}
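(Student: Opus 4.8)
The plan is to reduce the whole statement to a computation inside a single copy of $A_1(k)$. Write $G^{\circ}=A_1(k)^{r}$ and let $\sigma$ generate the top $Z_{r}$; since $r$ is prime, every nontrivial coset of $G^{\circ}$ has the form $G^{\circ}\sigma^{j}$ with $\sigma^{j}$ again a generator, so after renaming we may assume $C_{2}\subset G^{\circ}\sigma$. The key structural ingredient is the normal form for the outer coset: conjugating $(a_{1},\dots,a_{r})\sigma$ by a suitable $(1,h_{2},\dots,h_{r})\in G^{\circ}$ brings it to $(a_{1}a_{2}\cdots a_{r},1,\dots,1)\sigma$, whereas a general $G^{\circ}$-conjugation replaces the ``twisted product'' $a_{1}\cdots a_{r}$ by a conjugate of itself; hence the $G^{\circ}$-class of an element of $G^{\circ}\sigma$ is determined precisely by the $A_1(k)$-class of its twisted product. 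Thus, if $x=(b_{i})\in C_{1}$ and $y=(a_{i})\sigma\in C_{2}$, so $xy=(b_{1}a_{1},\dots,b_{r}a_{r})\sigma$, the $G^{\circ}$-class of $xy$ corresponds to the $A_1(k)$-class of $(b_{1}a_{1})(b_{2}a_{2})\cdots(b_{r}a_{r})$. Telescoping gives $(b_{1}a_{1})\cdots(b_{r}a_{r})=b_{1}\,({}^{g_{2}}b_{2})({}^{g_{3}}b_{3})\cdots({}^{g_{r}}b_{r})\,c$, where $g_{i}:=a_{1}\cdots a_{i-1}$ are free in $A_1(k)$ and $c:=a_{1}\cdots a_{r}$ is constrained only to lie in the fixed $A_1(k)$-class $c_{0}^{A_1}$ attached to $C_{2}$. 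So, writing $b_{i}^{A_1}$ for the fixed component classes of $C_{1}$, part~(a) is equivalent to the assertion that the set of $A_1(k)$-classes $\{[\beta_{1}\beta_{2}\cdots\beta_{r}c]:\beta_{i}\in b_{i}^{A_1},\ c\in c_{0}^{A_1}\}$ is infinite.

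The analytic core, valid in every characteristic, is: \emph{if $b,b'\in A_1(k)$ are both non-central, then $\beta\mapsto\operatorname{tr}(\tilde\beta\,\tilde b')$ (traces of lifts to $\GL_{2}$; in the adjoint case, the invariant $\operatorname{tr}^{2}/\det$) is non-constant on the conjugacy class $b^{A_1}$, so $\{[\beta b']:\beta\in b^{A_1}\}$ is infinite.} Indeed $b^{A_1}$ is an irreducible $2$-dimensional variety, so its affine span in $\mathfrak{gl}_{2}$ is $\tilde b+V$ for a nonzero submodule $V$ of $\mathfrak{gl}_{2}$ under conjugation; since $b^{A_1}$ is $2$-dimensional this span is not a translate of $kI$, hence $V\supseteq\mathfrak{sl}_{2}$. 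If $\operatorname{tr}(\tilde\beta\,\tilde b')=\langle\tilde\beta,\tilde b'\rangle$ were constant on $b^{A_1}$ it would be constant on $\tilde b+V$, forcing $\tilde b'\in\mathfrak{sl}_{2}^{\perp}=kI$, i.e.\ $b'$ central, a contradiction; and a non-constant morphism from an irreducible variety to the affine line has infinite image, while the displayed invariant separates all but finitely many $A_1(k)$-classes. To deduce~(a): as $C_{1}$ has two non-central component classes, after a harmless rotation (conjugation by a power of $\sigma$, which fixes $C_{2}$ and replaces $C_{1}$ by an equivalent class) we may assume the first and $j$th components ($j\ge2$) are non-central. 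In the set above fix $c=c_{0}$ and every $\beta_{i}$ with $i\notin\{1,j\}$; then $\beta_{1}\cdots\beta_{r}c=\beta_{1}\,u\,\beta_{j}\,v$ with $u,v$ fixed, of class $[\beta_{j}(v\beta_{1}u)]$. Choose the representative $\beta_{1}$ so that $v\beta_{1}u$ is non-central (possible since $b_{1}^{A_1}$ is $2$-dimensional) and let $\beta_{j}$ run over $b_{j}^{A_1}$; the core fact produces infinitely many $A_1(k)$-classes, giving~(a).

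For part~(b) we compute the commutator componentwise: for $x=(b_{i})\in G^{\circ}$ and $y=(a_{i})\sigma$ the element $[x,y]$ lies in $G^{\circ}$, and one of its coordinates — say the one coupling two cyclically adjacent positions — has the form $\delta^{-1}\cdot{}^{a}\gamma$, where $\gamma,\delta$ are the entries of $x$ in those two positions and $a\in A_1(k)$ may be chosen arbitrarily subject only to keeping the twisted product $a_{1}\cdots a_{r}$ in its prescribed class (put $a$ and $a^{-1}c$ in those two positions and $1$ elsewhere). Since the $G^{\circ}$-class of an element of $A_1(k)^{r}$ is the product of its coordinate $A_1(k)$-classes, it suffices that this one coordinate already meets infinitely many $A_1(k)$-classes. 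After a rotation we may take the two adjacent positions to carry non-central component classes of $C_{1}$ (automatic for the orders $r\in\{2,3\}$ arising in the applications, since among at most three cyclically ordered positions any two are adjacent). As $a$ varies, the coordinate runs over $\{[\beta\,\delta^{-1}]:\beta\in\gamma^{A_1}\}$; choosing $\delta$ non-central and not commuting with $\gamma$ and applying the core fact gives infinitely many classes there, hence in $[C_{1},C_{2}]$.

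The main obstacle is the core $A_1(k)$-fact, in particular in characteristic~$2$, where the naive heuristic ``$\operatorname{tr}(\tilde\beta\,\tilde b')$ is constant on $b^{A_1}$ iff $b$ and $b'$ commute'' breaks down (a non-constant function can have identically vanishing differential), so one really needs the affine-span argument together with a characteristic-free description of the submodule lattice of $\mathfrak{gl}_{2}$ and of the conjugacy invariants of $A_1(k)$. A secondary point to watch throughout is that every use of ``freedom'' — in the component representatives of $C_{1}$ and in the tuple $(a_{i})$ — must respect the single constraint that $a_{1}\cdots a_{r}$ stay in the fixed class $c_{0}^{A_1}$; the telescoping parametrisation is exactly what makes this bookkeeping transparent (and also what makes the product case more robust than the commutator case).
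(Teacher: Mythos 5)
Your proof follows essentially the same route as the paper's: the paper likewise passes to the normal form for elements of the outer coset (equivalently, records the $G^\circ$-class of $(a_1,\dots,a_r)\sigma$ by the $A_1(k)$-class of the twisted product) and reduces both parts to the fact that the product of two non-central classes of $A_1(k)$ meets infinitely many classes. The only real difference is that the paper simply invokes that $A_1(k)$ fact, whereas you prove it; your affine-span/trace argument is correct in all characteristics (the submodule chain $0\subset kI\subset\mathfrak{sl}_2\subset\mathfrak{gl}_2$ and $\mathfrak{sl}_2^{\perp}=kI$ with respect to the trace form do hold for $p=2$ as well), and your telescoping parametrisation makes the constraint on the twisted product transparent. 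Part (a) of your argument is complete for every prime $r$, since the twisted product couples all coordinates.

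For part (b), however, the step ``after a rotation we may take the two adjacent positions to carry non-central component classes'' does not work in general: conjugating by a power of $\sigma$ permutes the positions cyclically and therefore preserves cyclic adjacency, so if the two non-central coordinates of $C_1$ sit at cyclic distance $\ge 2$ (possible only when $r\ge 5$), no rotation makes them adjacent. In fact in that configuration the assertion (b) itself fails, so this is a defect of the statement rather than something your method could repair: take $r=5$, let $C_1$ be the $G^\circ$-class with non-central entries exactly in positions $1$ and $3$ and trivial entries elsewhere, and let $C_2\subset G^\circ\sigma$ be arbitrary, with $\sigma$ the shift coupling coordinate $i$ to $i+1$. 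For $x=(\beta_1,1,\beta_3,1,1)$ and $y=(a_1,\dots,a_5)\sigma$ one computes
$$[x,y]=\bigl(\beta_1^{-1},\ a_3^{-1}\beta_3 a_3,\ \beta_3^{-1},\ 1,\ a_1^{-1}\beta_1 a_1\bigr),$$
so each coordinate lies in a fixed $A_1(k)$-class and $[C_1,C_2]$ is a single $G^\circ$-class (and still only finitely many classes if one takes $G$-classes throughout). So your proof of (b) is valid exactly when two cyclically adjacent coordinates of $C_1$ are non-central, in particular for $r\in\{2,3\}$, which are the only cases the paper ever uses; the same limitation is implicit in the paper's one-line ``easy computation'', which also couples only adjacent coordinates. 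It would be worth stating this restriction explicitly rather than leaving it in a parenthesis.
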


\begin{proof}
Let $\sigma$ denote a generator of the cyclic subgroup of $G$ permuting the
$A_1(k)$-factors. Any outer element is conjugate to one of the form
$\sigma(1,\ldots,1,x)$ for some $x\in A_1(k)$. An easy computation with such
elements shows~(a) and~(b), using that the product of any two non-central
classes of $A_1(k)$ meets infinitely many classes.
\end{proof}

\begin{lem}   \label{lem:An}
 The claim in Theorem~\ref{thm:mainB} holds when $G^\circ=A_n(k)$.
\end{lem}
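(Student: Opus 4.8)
The plan is to follow the strategy already used in Lemma~\ref{lem:bothout}, reducing to the situation $C_1\subset G^\circ$ (since $G^\circ$ is simply laced and $C_2$ is outer by our standing assumption, the case where both classes are outer cannot occur for $A_n$ unless $n$ is small, and for $A_1$ it is covered by Lemma~\ref{lem:wreath}), and then to analyse when $C_1C_2$ can have only finitely many classes by playing off the unipotent classification of Theorem~\ref{thm:mainA} against the semisimple part. First I would use Corollary~\ref{cor:coprime} and Theorem~\ref{thm:closedclasses}: by passing to closures we may assume $C_2$ is the quasi-semisimple (quasi-central) class of graph automorphisms — here with $p=2$ there is a genuinely unipotent outer closed class, and with $p\neq2$ there is a semisimple one. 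Write $x\in C_1$ with Jordan decomposition $x=su$, and $y\in C_2$ a graph automorphism normalizing a Borel $B$ and maximal torus $T$ with $C_{G^\circ}(y)$ of type $C_m$ (when $n=2m-1$) or $B_m$ (when $n=2m$).

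The heart of the argument is a centralizer-dimension count via Lemma~\ref{lem:dense}, exactly as in Examples~\ref{exmp:graph-refl}--\ref{exmp:D4.S3}. If $C_1C_2$ is finite, then by Proposition~\ref{prop:hit}(a) the semisimple parts of $C_1C_2$ lie in a single $G^\circ$-class, which forces a severe restriction on $C_1$: taking $y$ in the centralizer-rich position, $sy$ must be $G$-conjugate to a fixed quasi-semisimple element for all relevant choices, so $s$ must have very small support — concretely $s$ is trivial or (modulo the centre of $\SL$) a pseudo-reflection. I would first dispose of the case $s=1$: then $C_1$ is unipotent, and either both classes are unipotent (landing in Theorem~\ref{thm:mainA}, hence case 3(e) with the transvection replaced by the unipotent pair, or no example) giving cases corresponding to (2)(c) of Theorem~\ref{thm:mainA}, i.e. case 3(f) here when $p\neq2$ fails — so in fact for $s=1$ and $p\neq2$ the only survivor is when $C_1$ consists of transvections and $C_2$ has centralizer $C_m(k)$, which is case 3(f). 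When $s\neq1$, the argument of Example~\ref{exmp:graph-refl} shows that $x$ a pseudo-reflection together with $y$ of type $C_m$ does give an example (case 3(e)), via $\Sp_{2n}(k) = \SO_{2n-1}(k)\,\GL_n(k)$-type transitivity, and one must then show nothing else survives: if $s$ has support of dimension $\geq 2$ in the natural module, restrict to a Levi of type $A_3$ or $A_2$ (disconnected) and compute structure constants in $A_3(2).2\cong\fS_8$ or $A_2(2).2\cong\PGL_2(7)$ (respectively in the odd-characteristic analogue $\PGL_2\times$ scalars) to rule it out, just as in the proof of Theorem~\ref{thm:mainA}; if $s$ is a pseudo-reflection but $u\neq1$, pass to the closure of $C_1$ and reduce to the pseudo-reflection case already handled.

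The main obstacle I expect is the bookkeeping over the many subcases of $C_1$ (the interaction of the semisimple part $s$, the unipotent part $u$, and the parity of $n$), together with making the reductions to small-rank Levis genuinely rigorous: one needs that every relevant $G^\circ$-class of such $x$ meets the appropriate Levi subgroup, and that the outer class $C_2$ restricts to the expected class there — for the quasi-central class this is automatic since it lies in the closure of every outer unipotent class (Theorem~\ref{thm:Spa}, Corollary~\ref{cor:unique}), but for non-quasi-semisimple $C_1$ one has to track which Levi contains a representative. A secondary subtlety is the even versus odd $n$ split: for $n=2m$ the graph automorphism has centralizer $B_m(k)$, and one must check (again via reduction to $A_2(k).2$ and $A_4(k).2$) that no pseudo-reflection example arises there, so that genuinely only odd $n$ contributes cases 3(e),(f). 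Finally one invokes Example~\ref{exmp:graph-refl} to confirm that the surviving configurations really are examples, completing the classification for $G^\circ=A_n(k)$.
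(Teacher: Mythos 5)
Your overall skeleton (reduce to $C_1\subset G^\circ$ and $C_2$ outer, pass to closures, split according to the Jordan decomposition of $x_1$, reduce to small Levi subgroups, and quote Examples~\ref{exmp:graph-refl} and \ref{ex:graph-trans} for existence) is the paper's skeleton, but several of your reduction steps do not actually go through as stated. First, "by passing to closures we may assume $C_2$ is the quasi-semisimple (quasi-central) class" is not justified: taking closures only replaces $C_2$ by \emph{the} quasi-semisimple class in $\overline{C_2}$, and in $A_{2m-1}(k).2$ with $p\neq 2$ there are two classes of quasi-semisimple outer involutions (centralizers $C_m(k)$ and $D_m(k)$) as well as infinitely many quasi-semisimple outer classes that are not involutions at all (elements $\sigma t$ with $t$ semisimple). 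You never argue why $x_2$ must be an involution, nor why the $D_m(k)$-centralizer class cannot occur; the paper does both, by reducing to an $x_2$-stable $A_3$-Levi (inspection forces an involution) and by pushing the $D_m$ case into a disconnected group of even type $A_{n-1}$, where the even-rank analysis kills it. Second, your "heart of the argument" — that constancy of semisimple parts forces $s$ trivial or a pseudo-reflection — is asserted rather than proved, and Proposition~\ref{prop:hit}(a) does not say this. The actual mechanism in the paper is Lemma~\ref{lem:wreath}: one conjugates $x_1$ into an $x_2$-stable subgroup of type $D_2$, i.e.\ $A_1(k)\wr 2$ with $x_2$ swapping the factors, and if $x_1$ has non-trivial image in both factors one gets infinitely many semisimple classes; the only semisimple elements escaping this are the pseudo-reflections (modulo scalars).

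Third, your treatment of the mixed case is logically flawed: if $s$ is a pseudo-reflection and $u\neq 1$, passing to the closure of $C_1$ lands you in the pseudo-reflection case, which \emph{is} an example, so no contradiction can be extracted that way — closure arguments only exclude a class when the smaller class already fails. A separate direct argument is needed, and the paper supplies it by noting such $x_1$ is a commuting product of a pseudo-reflection and a transvection and again applying Lemma~\ref{lem:wreath} inside $A_1(k)\wr 2$. Finally, for $p\neq 2$ and $C_1$ unipotent but not a transvection, the characteristic-$2$ structure-constant computations in $A_3(2).2\cong\fS_8$ and $A_2(2).2\cong\PGL_2(7)$ you invoke are the tools of Theorem~\ref{thm:mainA} and are not the relevant verification here; the paper instead reduces to $A_3(k)$ or $A_2(k)$ and does a direct matrix computation over $k$ showing the products have non-constant characteristic polynomial (this same computation is what settles the even-$n$ case you leave open). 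So while the route is the intended one, the quasi-central reduction, the restriction on $s$, and the mixed case each contain genuine gaps that the wreath-product lemma and the small-rank matrix computations are needed to close.
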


\begin{proof}
By our initial reductions and Lemma~\ref{lem:bothout} we may assume that
$r=2$, $C_1\subset G^\circ$ and $C_2$ is outer.

\medskip
Case 1. $n$ is even.

\noindent
In this case, we claim there are no examples. By taking closures, we may
assume that $C_2$ contains quasi-semisimple elements. Thus $x_2\in C_2$
normalizes a maximal torus, hence any Levi subgroup $L$ of $G^\circ$ invariant
under the graph automorphism. Again by closure, $C_1$ consists of semisimple
elements or of transvections. In either case, we see that one can find
$x_i \in C_i$ normalizing a subgroup $A_2(k)$ and acting nontrivially. Thus,
it suffices to take $n=2$. Now any quasi-semisimple element of $A_2(k)$
normalizes a maximal torus, so is of the form $x t$, $t\in T$, for some
fixed quasi-semisimple element $x$. It is then a straightforward matrix
computation to see that $C_1C_2$ always meets elements with distinct
characteristic polynomial, so with distinct semisimple part.

\medskip
Case 2. $n=2m-1 \ge3$ is odd.

\noindent
Arguing as above we can reduce to $x_1,x_2$ lying in a Levi subgroup of type
$A_3$. Inspection shows that then $x_2$ has to be an involution. We claim that
$C_2$ consists of involutions with centralizer $C_m(k)$. Suppose not. Then by
\cite[Tab.~4.5.1]{GLS}, elements in $C_2$ have centralizer $D_m(k)$.
We may assume that $x_1\in C_1$ is either semisimple or a transvection, and
then after conjugation we have $x_1,x_2$ in a disconnected subgroup with
connected component a Levi subgroup of type $A_{n-1}$, where no example exists
by the first case.
\par
If $x_1$ is semisimple, there is some root of $G$ which is
non-trivial on $x_1$. Since $G^\circ$ has just one root length, after
conjugation we may assume that $x_1$ lies in an $x_2$-stable Levi subgroup $L$
of type $D_2$ but not in its center, with $x_2$ swapping the two factors.
Moreover, if no eigenspace of $x_1\in C_1$ has dimension $n-1$, then we may
arrange that the image of $x_1$ in both factors is non-trivial. In that case,
by Lemma~\ref{lem:wreath} the product $C_1C_2$ meets infinitely many semisimple
classes inside the wreath product $A_1(k)\wr2$. On the other hand, if $x_1$
is a pseudo-reflection (modulo scalars), we get case~(3)(e) of the assertion by
Example~\ref{exmp:graph-refl}. \par
If $x_1$ is unipotent, we may assume that $p>2$ since otherwise we are in the
situation of Theorem~\ref{thm:mainA}. If $x_1$ is a transvection, then we get
case~(f) by Example~\ref{ex:graph-trans}. Else, by closure we may assume that
$x_1$ has two Jordan blocks of size~2 or one of size~3. In the first case,
we may reduce to $A_3$, in the second to $A_2$, and no examples arise.
\par
If $x_1$ is neither unipotent nor semisimple, then by the previous arguments
it must be the commuting product of a pseudo-reflection with a transvection.
In this case, we may again reduce to the wreath product $A_1(k)\wr2$ and
apply Lemma~\ref{lem:wreath}.
\end{proof}

\begin{lem}
 The claim in Theorem~\ref{thm:mainB} holds when $G^\circ=D_n(k)$, $n\ge4$.
\end{lem}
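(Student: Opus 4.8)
The plan is to follow the proof of the preceding lemma (the case $G^\circ=A_n(k)$), replacing type-$A$ Levi subgroups by subgroups adapted to the orthogonal geometry. By the reductions before Lemma~\ref{lem:bothout} and by that lemma itself, we may assume $C_1\subseteq G^\circ=D_n(k)$, that $C_2$ lies in one coset of $G^\circ$ of order~$2$ or~$3$ (so $r:=[G:G^\circ]\in\{2,3\}$, with $r=3$ only when $n=4$), and—since the subcase ``$C_1,C_2$ both unipotent'' is covered by Theorem~\ref{thm:mainA}—that not both $C_1,C_2$ are unipotent. I work inside $\GO_{2n}(k)\ge G$ on the natural module $V=k^{2n}$ (taking the alternating form and $D_n(k).2\le\Sp_{2n}(k)$ when $p=2$). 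Two devices recur. First: if $C_1C_2$ is a finite union of classes then so is $C_1'C_2'$ for any $C_i'\subseteq\overline{C_i}$, since $C_1'C_2'\subseteq\overline{C_1C_2}$; so to force a contradiction we may pass to smaller classes in the closures, the smallest being quasi-semisimple by Theorems~\ref{thm:Spa},~\ref{thm:closedclasses} and Corollary~\ref{cor:unique}. Second: by Proposition~\ref{prop:hit}, to prove $C_1C_2$ meets infinitely many classes it suffices to exhibit a reductive $H\le G$ meeting both $C_i$ on which the restricted product already does; the $H$ that arise are subsystem subgroups of type $D_3=A_3$ or $D_m\times D_{n-m}$ (handled via Lemma~\ref{lem:An}, \cite[Thm.~1.1]{GMT}, and induction on $n$), a subgroup $A_1(k)\wr2$ (a subsystem $D_2(k)$ extended by a reflection from $C_2$ swapping the factors), resp.\ $A_1(k)\wr3$ (a subsystem $A_1(k)^3$ permuted cyclically by triality when $r=3$) (handled by Lemma~\ref{lem:wreath}), or a finite group $\SO_8^{\pm}(q).e$ (handled by computing structure constants).

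Passing to the closure of $C_2$, I may take it quasi-semisimple, so by Lemma~\ref{lem:quasi-semisimple} its elements are $y=vs$ with $v$ the quasi-central outer unipotent element (a reflection on $V$, with $C_{G^\circ}(v)=B_{n-1}(k)$, when $r=2$; a $G_2(k)$-triality when $r=3$) and $s\in C_{G^\circ}(v)^\circ$ semisimple commuting with $v$. Suppose first $s=1$, so $C_2$ is the quasi-central class. If $C_1$ is unipotent and $p$ equals $2$ (when $r=2$) or $3$ (when $r=3$), then $C_2$ too is unipotent and Theorem~\ref{thm:mainA} applies; otherwise: if $C_1$ is unipotent with all Jordan blocks on $V$ of size $\le2$, we get case~(3)(h)—or case~(3)(j) when $r=3$ and $C_1$ consists of long root elements—by Example~\ref{exmp:GOtrans}, resp.\ Example~\ref{exmp:D4andE6}; if $C_1$ is unipotent with a block of size $\ge3$, it fixes a nondegenerate subspace of dimension $\le8$ carrying such a block (\cite{Spa}) and we reduce to $D_3(k)=A_3(k)$ or to $D_4(k)$, the latter settled by $\GO_8^{\pm}(q)$-computation; if $C_1$ is semisimple, then either $C_{G^\circ}(C_1)$ has type $A_{n-1}$—equivalently, $C_1$ has a totally singular $n$-dimensional eigenspace, the $\GL_n(k)\le\SO_{2n}(k)$ configuration—giving case~(3)(g) by Example~\ref{exmp:GO}, or $C_1$ can be arranged to have nontrivial image in at least two factors of a subsystem $A_1(k)\times A_1(k)$ (resp.\ $A_1(k)^3$ for $r=3$) as above, whence Lemma~\ref{lem:wreath} applies; if $C_1$ is neither unipotent nor semisimple, the same reductions land in the wreath-product situation or in a proper subsystem subgroup with no new example. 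Finally, for $r=3$ with $C_1$ not long root elements, triality fuses the three unipotent classes of second-smallest dimension of $D_4(k)$, and a computation in $\SO_8^{\pm}(q).3$ (after passing to closures) finishes this subcase.

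Now suppose $s\ne1$ (resp.\ $r=3$ and $C_{G^\circ}(y)\subsetneq G_2(k)$); I claim there is then no example. The element $y=vs$ preserves a proper nondegenerate decomposition of $V$, or fixes a proper subsystem subgroup; after also passing to the closure of $C_1$—so that $C_1$ is semisimple, a transvection, or a commuting product of such—one finds $C_1$ acting nontrivially there, and concludes that $C_1C_2$ meets infinitely many semisimple classes, either by a direct torus computation as in Lemma~\ref{lem:bothout} or, via Proposition~\ref{prop:hit}, by reduction to a proper subsystem subgroup ($D_m(k).2\times D_{n-m}(k)$, $A_3(k).2$, or $A_1(k)\wr2$) and Lemma~\ref{lem:An}, Lemma~\ref{lem:wreath}, \cite[Thm.~1.1]{GMT}, or induction on $n$; for $r=3$, a triality with small centralizer is handled by restricting to a $\sigma$-stable subsystem of type $A_2$ or $A_1^3$ together with an $\SO_8^{\pm}(q).3$-computation. (When $C_1$ is unipotent and $p=2$, one first checks via Theorem~\ref{thm:mainA} that no unipotent example has a non-quasi-central $C_2$.) This exhausts the cases and yields exactly (3)(g), (3)(h), (3)(j), together with the relevant unipotent cases of Theorem~\ref{thm:mainA}.

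The step I expect to be the main obstacle is the semisimple-$C_1$ analysis in the second paragraph: proving cleanly that the $\GL_n(k)$-class (centralizer of type $A_{n-1}$) is the \emph{only} semisimple class whose product with the reflection class remains a finite union of classes—that is, exhibiting, for every other semisimple class, a genuine $y$-stable $A_1(k)\times A_1(k)$ on which it has nontrivial image in both factors, which needs care about small eigenspaces, about the scalars involved, and about characteristic~$2$—together with the handful of explicit $\SO_8^{\pm}(q)$- and $\SO_8^{\pm}(q).3$-computations disposing of the sporadic $D_4$ configurations, and the torus bookkeeping for the non-quasi-central case in the third paragraph.
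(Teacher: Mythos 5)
Your overall strategy is the paper's: pass to closures, use the wreath-product Lemma~\ref{lem:wreath}, reduce to $D_3(k)=A_3(k)$ or to $D_4$ computations, and quote Examples~\ref{exmp:GO}, \ref{exmp:GOtrans}, \ref{exmp:D4andE6} for the positive cases. But there is a genuine gap in the logic of your reductions: closure arguments only run one way. If $C_1C_2$ is a finite union of classes then so is $C_1'C_2'$ for classes in the closures, so replacing a class by one in its closure is legitimate \emph{only} when the replaced pair can be shown to fail. By assuming from the outset that $C_2$ is quasi-semisimple (and, in the $s\ne1$ discussion, also shrinking $C_1$), you never rule out the classes whose closures land exactly on the allowed examples. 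Concretely, for $r=2$ you must still exclude a non-quasi-semisimple outer $C_2$ whose quasi-semisimple part is a reflection (e.g.\ a reflection times a commuting long root element) paired with $C_1$ semisimple with centralizer $\GL_n(k)$ or with $C_1$ unipotent with all blocks of size at most $2$; and you must exclude the mixed inner class $C_1$ whose semisimple part has centralizer $\GL_n(k)$ and whose unipotent part has blocks of size at most $2$, paired with the reflection class. None of these is touched by your argument (your sentence ``the same reductions land in the wreath-product situation or in a proper subsystem subgroup with no new example'' does not apply to them, since their closures contain cases (3)(g)/(3)(h)), whereas the paper disposes of each by a further closure step \emph{inside} the unipotent part (down to a root element) and an explicit reduction to $D_3(k)=A_3(k)$. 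The same issue recurs for $r=3$: the paper's final step treats a non-quasi-semisimple $x_2=xu=ux$ with $u$ a long root element and $x$ quasi-semisimple of order $3$, and kills it by choosing $x_1$ in $C_{G^\circ}(x)\cong G_2(k)$ and using that $u^{G_2(k)}u^{G_2(k)}$ meets infinitely many classes; your proposal has no counterpart to this.

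A secondary point: for $r=3$ with $C_1$ unipotent but not a long root element, you propose fusion of the three classes of second-smallest dimension plus a finite $\SO_8^{\pm}(q).3$ computation; the paper instead puts the pair into the normalizer of the $A_1^3$-parabolic and applies Lemma~\ref{lem:wreath} to the image in the Levi subgroup, which directly produces infinitely many semisimple parts (a finite structure-constant computation by itself does not yield infinitude unless you add the argument that distinct semisimple-part orders appear). Finally, the dichotomy you flag yourself for semisimple $C_1$ (either centralizer of type $A_{n-1}$ or a nontrivial image in two $A_1$-factors of a $y$-stable $D_2$) is indeed what the paper uses, and at a comparable level of detail, so that part is fine in outline; the missing mixed-class and non-quasi-semisimple-$C_2$ cases are the real defect.
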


\begin{proof}
As in Lemma~\ref{lem:An} we may assume that $C_1\subset G^\circ$ and $C_2$
is outer, so $r\in\{2,3\}$.

\medskip
Case 1. $r=2$.

\noindent
First suppose that $x_1$ is semisimple but not as in the conclusion~(g).
By taking closures we may assume $x_2$ is quasi-semisimple. Then we can
reduce to $\GO_4(k)=A_1(k)\wr2$. Moreover $x_1$ projects to elements which
are nontrivial in each factor $A_1(k)$ whence $C_1C_2$ contains infinitely
many classes by Lemma~\ref{lem:wreath}.
Next suppose that $x_1$ has centralizer $\GL_n(k)$. If $x_2$ is
quasi-semisimple but not a reflection, we can reduce to
$D_3(k)=A_3(k)$ and invoke Lemma~\ref{lem:An}. So we may assume that the
semisimple part of $x_2$ is a reflection. If
its unipotent part is nontrivial, then by taking closures we may assume it
is a root element in $\SO_{2n-1}(k)$ and so either has two Jordan blocks of
size~2 or one Jordan block of size~3. Again, we can reduce to $D_3(k)$ where
this cannot happen. When $x_2$ is a reflection, we arrive at~(3)(g) by
Example~\ref{exmp:GO}.

If $x_1$ is unipotent with all Jordan blocks of size at most~2 and $x_2$ is
a reflection, then this gives~(3)(h) by Example~\ref{exmp:GOtrans}. If $x_2$
does not consist of reflections, we may again reduce to $D_3(k)$ to obtain a
contradiction. If $x_1$ has a Jordan block of size at least~3, by taking
closures we may assume that it has just one Jordan block and $x_2$ is
semisimple, and then we may reduce to a subgroup $D_3(k)$ for which there are
no such examples.

Finally, suppose that $x_1$ is a mixed element. The above argument shows that
$x_2$ is a reflection, the semisimple part of $x_1$ has centralizer $\GL_n(k)$
and its unipotent part has Jordan blocks of size at most~2. Again, we can
reduce to $D_3(k)$ to rule out this case.

\medskip
Case 2. $r=3$, so $n=4$.

\noindent
First assume that $x_2\in C_2$ is quasi-semisimple. If $x_1$ is unipotent,
then $x_1,x_2$ lie in the normalizer of a maximal
parabolic subgroup $P$ of type $A_1^3$. If $x_1$ is not a long root element,
the image of $x_1$ in the Levi subgroup $L$ of $P$ is non-trivial in at least
two of the factors, whence the product $C_1C_2$ meets infinitely many classes
in $L$ by Lemma~\ref{lem:wreath}. If $x_1$ is a long root element, we may
conjugate $x_1,x_2$ into the
graph centralizer $G_2(k)$. By \cite[Thm.~5.11]{GMT} the only example there is
for $x_2$ to be the 3-element with centralizer $\SL_3(k)$. But by the argument
in Example~\ref{exmp:D4andE6}, all such elements fuse into the class of graph
automorphisms with centralizer $G_2(k)$, so we arrive at~(3)(j). \par
Any non-central semisimple element has a conjugate which is non-trivial in
at least two factors of the Levi subgroup $L$ of type $A_1^3$ and thus gives
infinitely many classes there. Finally, if $x_1$ is a mixed element, we may
assume that its unipotent part is a long root element, and then again its
image in $L$  is non-trivial in at least two factors. \par
Finally, if $x_2$ is not quasi-semisimple, by the previous argument and taking
closures, we may assume that $x_2 = xu =ux$ where $u$ is a long root element
and $x$ is quasi-semisimple of order~3, and $x_1$ is a long root element.
In particular if we take $x_1$ centralizing $x$, we see that $x_1$ and $u$
are conjugate in $G_2(k)$ and since $u^{G_2(k)}u^{G_2(k)}$
hits infinitely many classes, we are done.
\end{proof}

\begin{lem}
 The claim in Theorem~\ref{thm:mainB} holds when $G^\circ=E_6(k)$.
\end{lem}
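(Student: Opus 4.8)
The plan is to mimic the strategy used in the previous two lemmas, exploiting the rich supply of graph-automorphism–stable Levi and subsystem subgroups of $E_6$ together with Proposition~\ref{prop:hit} to reduce to already-settled cases, and then to fall back on the two genuine examples recorded in Examples~\ref{exmp:E6} and~\ref{exmp:D4andE6} (cases (2)(1)$\leftrightarrow$(2)(e) of Theorem~\ref{thm:mainA} and (3)(i) of Theorem~\ref{thm:mainB}). Since $G$ is disconnected with $G^\circ=E_6(k)$, necessarily $r=[G:G^\circ]=2$, and by our initial reduction plus Lemma~\ref{lem:bothout} we may assume $C_1\subset G^\circ$ and $C_2$ consists of outer (graph) automorphisms. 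The unipotent subcase $p=2$ with $C_1$ unipotent and $C_2$ the quasi-central (transvection-type) graph automorphism is already covered by Theorem~\ref{thm:mainA}, so we concentrate on showing that, apart from this and case~(3)(i), nothing survives.

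First I would pass to closures: by Theorem~\ref{thm:closedclasses} we may assume $x_2\in C_2$ is quasi-semisimple, hence normalizes a maximal torus and a Borel subgroup, and therefore normalizes the graph-stable Levi subgroups of $E_6$; the two that matter are the subsystem subgroup of type $A_5A_1$ and the Levi of type $D_4$ (or $A_3$ inside it). Write $x_2=su$ with $s$ semisimple, $u$ unipotent; by \cite[p.~250]{Spa} every outer unipotent class other than the quasi-semisimple ones (centralizers $F_4(k)$ and $C_4(k)$) has the graph automorphism with centralizer $C_4(k)$ in its closure, so by closure I may first dispose of all $C_2$ whose unipotent part is noncentral and whose semisimple part is trivial, by reducing into a graph-stable proper Levi — an $A_3.2$ or $D_4.3/2$ situation already handled in Lemma~\ref{lem:An} and the $D_n$ lemma, or into $F_4$ via the $F_4$-centralizer trick. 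This leaves two quasi-semisimple possibilities for $C_2$: centralizer $F_4(k)$ or centralizer $C_4(k)$.

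For $C_2$ with centralizer $C_4(k)$ I expect no examples: such $x_2$ has a large move space, and I would reduce $x_1,x_2$ into a graph-stable $A_5$-Levi (where $C_2\cap A_5.2$ is the $C_3$-type graph automorphism) or into $D_4.2$, and invoke Lemma~\ref{lem:An}, the $D_n$ lemma, or Lemma~\ref{lem:wreath} after projecting to an $A_1\wr2$; the point is that whenever the image of $x_1$ in such a Levi is noncentral in at least two factors we get infinitely many classes, and the remaining low-rank residues reduce to $A_3$ or $D_3$ where nothing arises. For $C_2$ with centralizer $F_4(k)$ I would split on $x_1$: if $x_1$ is a long root element we are in case~(3)(i) by Example~\ref{exmp:D4andE6} (for $p\ne2$) and in Theorem~\ref{thm:mainA}(2)(e) for $p=2$; otherwise, by taking closures, $x_1$ may be assumed to lie in the subsystem Levi $A_5(k)$ (e.g. class $2A_1$ or a semisimple/mixed element supported there), as in the proof of Theorem~\ref{thm:mainA}, and the result follows from Lemma~\ref{lem:An}. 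The semisimple $x_1$ with centralizer meeting all roots, and the genuinely mixed $x_1$, are handled by the same reductions into $A_5.2$ or $D_4.2$ and Lemma~\ref{lem:wreath}. The main obstacle I anticipate is bookkeeping: checking that every non-example class $C_1$ (there are many unipotent, semisimple and mixed classes in $E_6$) really does have a representative inside one of the two graph-stable Levi/subsystem subgroups on which $x_2$ acts nontrivially, so that Proposition~\ref{prop:hit}(b) applies; this requires a careful, case-by-case inspection of the $E_6$ class list against the $A_5A_1$ and $D_4$ subsystems, but involves no computation beyond consulting \cite{Spa} and the structure-constant computations in the relevant finite subgroups ($F_4(2)$, $\Omega_8^+(2).2$, $\mathrm{SL}_4(2).2$).
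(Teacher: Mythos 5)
Your overall strategy---conjugate $x_1,x_2$ into the normalizer of a graph-stable Levi subgroup of type $A_5$, feed the pair into Lemma~\ref{lem:An} via Proposition~\ref{prop:hit}, and then quote Examples~\ref{exmp:E6} and~\ref{exmp:D4andE6} for the surviving configuration---is the paper's strategy. But the pivotal discriminating step is stated backwards and is unproved. To eliminate the outer involution class with centralizer $C_4(k)$ one must know how the two classes of outer involutions of $A_5(k).2$ (centralizers of types $C_3$ and $D_3$) fuse in $E_6(k).2$: the paper shows, by comparing composition factors on the $27$-dimensional module ($A_5(k)$, and hence $D_3(k)$, has constituents of dimensions $6,6,15$, while $F_4(k)$ has a one-dimensional constituent), that the $D_3$-type involutions fuse into the class with centralizer $C_4(k)$. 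It is exactly because the $C_4$-class meets $A_5(k).2$ in $D_3$-type involutions---a configuration excluded by Lemma~\ref{lem:An}---that this class is ruled out, and consequently the admissible $C_3$-type involutions force $C_{G^\circ}(x_2)=F_4(k)$. You assert the opposite, namely that the $C_4$-centralizer class meets $A_5.2$ in the $C_3$-type graph automorphism; were that true, the $A_5$-reduction would not rule this class out at all (that is precisely the admissible configuration of Theorem~\ref{thm:mainB}(e)/(f)), and your fallback---reducing to $D_4.2$ or projecting to $A_1\wr 2$ and invoking Lemma~\ref{lem:wreath}---does not touch the critical pairs where $C_1$ consists of long root elements or transvection-like classes, whose image in such a subgroup is noncentral in only one factor. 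So the case you say you ``expect no examples'' from is left without proof, and no argument for the fusion is offered.

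A second gap: after passing to closures you only dispose of outer classes with trivial semisimple part, and then declare that ``this leaves two quasi-semisimple possibilities'' for $C_2$. Quasi-semisimple outer elements of $E_6(k).2$ form infinitely many classes (elements $s\tau$ with $s$ semisimple and noncentral), and eliminating all of them except the involutions is not automatic; in the paper this is exactly what the reduction into the $A_5$-Levi plus Lemma~\ref{lem:An} delivers (``this forces $x_2$ to be an involution''). Similarly, the exclusion of semisimple and mixed $C_1$ paired with the $F_4$-centralizer involutions---needed because, unlike types $A_{2m-1}$ and $D_n$, the $E_6$ case admits no semisimple $C_1$---is carried by the same reduction and the list of admissible classes in Lemma~\ref{lem:An}, not by Lemma~\ref{lem:wreath} alone. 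These reductions, together with the fusion statement above, are the substance of the proof and should be made explicit rather than deferred to bookkeeping or to structure-constant computations in finite groups, which this case does not require.
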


\begin{proof}
Here we have $r=2$. By closure we may assume that $x_1\in C_1$ is semisimple
or unipotent of type $A_2$, or
with unipotent part a long root element, and $x_2\in C_2$ is quasi-semisimple.
Hence we may arrange so that $x_1,x_2$ are contained in a Levi subgroup of
type $A_5$, which is normalized by the graph automorphism. By
Lemma~\ref{lem:An} this
forces $x_2$ to be an involution. There are two classes of outer involutions
in $G$, with centralizers $F_4(k)$ resp. $C_4(k)$. The outer involutions of
$A_5(k).2$ with centralizer of type $D_3$ fuses into the outer class with
centralizer of type $C_4$ (since $A_5(k)$ has composition factors of
dimensions $6,6,15$ on the 27-dimensional module for $E_6(k)$, and so has
$D_3(k)$, while $F_4(k)$ has a 1-dimensional constituent), so by our
considerations in Lemma~\ref{lem:An}, $x_2$ must have centralizer $F_4(k)$,
and $x_1$
must be a long root element. This occurs by Example~\ref{exmp:D4andE6}.
\end{proof}

We have now discussed all possibilities for $G$ and thus completed the proof
of Theorem~\ref{thm:mainB}.

We note the following relation between the examples in disconnected groups
in Theorem~\ref{thm:mainB} and the ones in connected groups in
\cite[Thm.~1.1]{GMT}:

\begin{prop}   \label{prop:rel}
 Let $G$ be a simple algebraic group, $\tau$ a graph automorphism of $G$ with
 reductive centralizer $H=C_G(\tau)$. Suppose moreover that for some
 $1\ne g\in H$, $\tau$ and $g\tau$ are $G$-conjugate.
 If $C:=\tau^G$ and $D\subset G$ are classes such that $CD$ consists of only
 finitely many classes in $G\tau$, then the same holds for $C_1=g^H$,
 $D_1=D\cap H$ in $H$.
\end{prop}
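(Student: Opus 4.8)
The plan is to transport the assertion about $H$ back to the given finiteness statement in $\hat G:=\langle G,\tau\rangle$, by realising $C_1D_1$ (after multiplying by $\tau$) inside a product of two classes whose finiteness we already know, and then invoking Proposition~\ref{prop:hit}. The crucial observation — and the one place where the hypothesis that $\tau$ and $g\tau$ are $G$-conjugate is used — is that $C_1\tau\subseteq C$. Indeed, write an element of $C_1=g^H$ as $c=hgh^{-1}$ with $h\in H=C_G(\tau)$; since $h$ commutes with $\tau$ we get $c\tau=h(g\tau)h^{-1}$, and $g\tau\in\tau^G$ by hypothesis, so $c\tau\in\tau^G=C$ (note $\tau^G=\tau^{\hat G}$, since $\tau$ commutes with its own powers). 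As $c\in H$ we moreover have $c\tau\in H\tau\subseteq\hat H$, where $\hat H:=C_{\hat G}(\tau)=H\langle\tau\rangle$; thus $C_1\tau\subseteq C\cap\hat H$. Also, for $c\in C_1$ and $d\in D_1=D\cap H\subseteq H$, the fact that $d$ commutes with $\tau$ gives $cd\tau=(c\tau)d$, whence
$$C_1D_1\,\tau\ \subseteq\ (C\cap\hat H)(D^{\hat G}\cap\hat H),$$
where $D^{\hat G}$ denotes the $\hat G$-class of $D$.

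Next I would verify the hypotheses of Proposition~\ref{prop:hit}(b) for $\hat G$, its subgroup $\hat H$, and the $\hat G$-classes $C$ and $D^{\hat G}$. The group $\hat H$ is closed, and reductive: $H$ is reductive by assumption and $\hat H=H\langle\tau\rangle$ has identity component $H^\circ$. Moreover $CD^{\hat G}$ is a finite union of $\hat G$-classes: writing $D^{\hat G}=\bigcup_{\tau^j\in\langle\tau\rangle}\tau^jD\tau^{-j}$ as a finite union of $G$-classes, each $C(\tau^jD\tau^{-j})$ equals $\tau^j(CD)\tau^{-j}$ (using $\tau^jC\tau^{-j}=C$) and hence is a finite union of classes, because $CD$ is one and conjugation by $\tau^j$ permutes the conjugacy classes of $\hat G$. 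So Proposition~\ref{prop:hit}(b), in contrapositive form, yields that $(C\cap\hat H)(D^{\hat G}\cap\hat H)$ is a finite union of $\hat H$-classes, and therefore so is its subset $C_1D_1\,\tau$ — provided the latter is a union of $\hat H$-classes, which it is: conjugation by $\tau$ fixes every element of $H$, so $\hat H$-conjugation on subsets of $H$ agrees with $H$-conjugation, whence $C_1D_1$ is $H$-stable and $C_1D_1\,\tau$ is $\hat H$-stable.

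Finally one translates back to $H$: for $z\in H$ one has $(z\tau)^{\hat H}=(z^H)\tau$ (using $\tau h\tau^{-1}=h$ for $h\in H$ and $\tau(z\tau)\tau^{-1}=z\tau$), so the $\hat H$-classes meeting $H\tau$ correspond bijectively, via $z\tau\leftrightarrow z$, to the $H$-classes in $H$; hence $C_1D_1$ is a finite union of $H$-classes, as claimed. The substantive steps are the identity $C_1\tau\subseteq C$ and the choice of the reductive overgroup $\hat H=C_{\hat G}(\tau)$ so that Proposition~\ref{prop:hit} applies — everything else is formal. The point that needs genuine care, and the likeliest source of slips, is the interplay between $G$- and $\hat G$-conjugacy (for instance $D$ need not be a single $\hat G$-class), which is exactly why the argument enlarges $D$ to $D^{\hat G}$ and why one must check, as above, that this enlargement does not spoil the finiteness of the product.
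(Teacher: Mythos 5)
Your proof is correct and takes essentially the same route as the paper: the key inclusion $C_1D_1\tau=(g\tau)^H D_1\subseteq CD$ (using that $H$ centralizes $\tau$ and that $g\tau\in\tau^G$) followed by the contrapositive of Proposition~\ref{prop:hit}(b) for the reductive subgroup $C_{\langle G,\tau\rangle}(\tau)$. You merely make explicit the bookkeeping the paper leaves implicit, namely enlarging $D$ to its class in $\langle G,\tau\rangle$ and matching the classes in the coset $H\tau$ with $H$-classes in $H$.
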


\begin{proof}
Clearly, $C_1D_1\tau=g^H\tau D_1=(g\tau)^H D_1\subset CD$, and if the latter
meets only finitely many $G$-classes, then the former only meets finitely
many $H$-classes, see Proposition~\ref{prop:hit}.
\end{proof}

This applies to cases~(e), (f), (h), (i) and~(j) in Theorem~\ref{thm:mainB}.

\section{Commutators}   \label{sec:comm}

In this section we turn to commutators of pairs of conjugacy classes. We
first investigate the commutator of a single class and thus prove
Theorem~\ref{thm:mainC}. The proof follows the ideas in \cite[\S5]{GMT}.

It is convenient to recall the following result \cite{Gur79} (which
answered a question of Katz before it was asked --- see \cite[2.16.7]{Ka}).

\begin{thm}  \label{thm:gur}
 Let $G$ be a simple algebraic group of type $A$ or $C$. Suppose that
 $x,y \in G$ with $[x,y]$ a long root element (i.e., a transvection).
 Then $\langle x, y \rangle$ is contained in a Borel subgroup.
\end{thm}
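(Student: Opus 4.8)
The plan is to prove Theorem~\ref{thm:gur} by analysing the natural action of $G$ on its standard module $V$ and showing that $x$ and $y$ have a common eigenvector, indeed a common invariant flag. Since $[x,y]$ is a transvection, write $t:=[x,y]=1+e$ where $e$ has rank~$1$; in type $A$ we take $V=k^{n+1}$ with $G=\SL(V)$, and in type $C$ we take $V=k^{2n}$ equipped with the invariant symplectic form, with $G=\Sp(V)$ and $t$ a symplectic transvection. The key structural fact is that $e$ has image a $1$-space $\ell$ and kernel a hyperplane $H$, and that $x^{-1}y^{-1}xy = t$ rearranges to $yx = xyt^{-1}$, i.e.\ the two orderings of the product differ by a single transvection.

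First I would reduce to an irreducibility statement. It suffices to show $\langle x,y\rangle$ stabilizes a $1$-space (equivalently, by duality, a hyperplane): once we have one such stabilized subspace, we quotient/restrict and induct on $\dim V$, noting that the induced pair of elements on a proper invariant subspace or quotient still has commutator either trivial or a transvection (a transvection restricted to an invariant subspace is a transvection or the identity). In the symplectic case one must be a little careful — the natural object to induct on is a $\langle x,y\rangle$-invariant isotropic or nondegenerate subspace — but the same reduction works, landing finally in $\SL_2$ or $\Sp_2$ where the statement is an elementary two-by-two computation (any two elements of $\SL_2(k)$ with commutator a nontrivial transvection are simultaneously upper triangularizable). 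So the whole content is: \emph{if $[x,y]$ is a transvection then $\langle x,y\rangle$ is reducible on $V$.}

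For the reducibility step I would argue by contradiction, assuming $\langle x,y\rangle$ acts irreducibly on $V$. The commutator relation gives $xy \equiv yx \pmod{\text{rank-one perturbation}}$. Consider the subalgebra $A=k\langle x,y\rangle$ of $\operatorname{End}(V)$: since the action is irreducible, $A=\operatorname{End}(V)$ by Burnside. Now examine the line $\ell=\operatorname{im}(e)$ and hyperplane $H=\ker(e)$ attached to $t-1=e$. From $yx-xy=-yxe$ (or a similar identity obtained by expanding $[x,y]=t$) one extracts that $\ell$ and $H$ are constrained by $x$ and $y$ in a way incompatible with $A=\operatorname{End}(V)$ unless $\dim V$ is small; concretely, one shows the orbit of $\ell$ under $\langle x,y\rangle$ spans a proper subspace, or dually that $H$ contains a nonzero $\langle x,y\rangle$-invariant subspace. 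This is exactly the kind of "commutator is a transvection forces reducibility" phenomenon, and it is precisely the content of \cite{Gur79}; the clean way to present it is to invoke that reference for the linear-algebra core and only spell out the symplectic bookkeeping and the final descent to rank~$1$.

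The main obstacle will be the symplectic case: one must check that the inductive descent can always be carried out through \emph{nondegenerate} or totally isotropic invariant subspaces, and that the transvection hypothesis is inherited correctly (a symplectic transvection restricted to a nondegenerate invariant subspace containing its center is again a symplectic transvection, and is trivial otherwise). Handling the possibility that $\ell \subseteq \ell^{\perp}$ versus $\ell \not\subseteq \ell^\perp$, and the interaction of $\langle x,y\rangle$-invariance with the form, is the delicate part; everything else is either Burnside/irreducibility formalism or the trivial $\operatorname{rank}\le 2$ base case. Once reducibility is established in both types, the induction terminates with $\langle x,y\rangle$ contained in the stabilizer of a complete flag (respectively a complete isotropic flag together with its perp-filtration), i.e.\ in a Borel subgroup, which is the assertion.
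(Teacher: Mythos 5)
Your proposal is correct and at its core is the same argument as the paper's: rewrite $[x,y]=1+e$ as the rank-one identity $xy-yx=yxe$ and invoke the main theorem of \cite{Gur79} to triangularize $\langle x,y\rangle$. The Burnside/irreducibility scaffolding becomes superfluous once \cite{Gur79} is cited in full strength (and note that every line in a symplectic space is isotropic, so the case $\ell\not\subseteq\ell^\perp$ never arises); your explicit descent to a complete isotropic flag merely fills in the symplectic bookkeeping that the paper leaves implicit behind ``we may as well work in $\SL_n$''.
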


\begin{proof}
We may as well work in $\SL_n$. Suppose that $z:=x^{-1}y^{-1}xy$ is a long
root element. Write $z = I +N$ where $N$ is a nilpotent rank one matrix. Then
$xy - yx = yxN$ is a rank one matrix. Now apply the main result
of \cite{Gur79}.
\end{proof}

\begin{proof}[Proof of Theorem~\ref{thm:mainC}]
Let $G$ be an almost simple algebraic group over $k$ and $C$ a
$G^\circ$-conjugacy
class of $G$ outside $Z(G^\circ)$. Let $w: C \times C \rightarrow G$ be the
map defined by $w(x,y)=[x,y]$.

Since $C$ is an irreducible variety, if the result fails, the semisimple
part of $[x,y]$ for $(x,y) \in C \times C$ would be constant. Of course,
$1=[x,x]$ and therefore the image of $w$ must be contained in the set of
unipotent elements of $G$. If $[C,C]$ consists of finitely many classes, the
same is true for its closure, so we may also replace $C$ by any class in its
closure and so assume that $C$ is semisimple or unipotent.

First suppose that $G \cong A_1(k)$.   We claim that it suffices
to show the result for $\SL_2(k)$. Indeed, lift $C$ to a class of $\SL_2(k)$;
the image of $w$ is irreducible and contains $1$, so still all commutators
are unipotent (not just unipotent modulo the center). 
 
Choose $(x,y) \in C \times C$ so that $x$ and $y$ are not contained in a
common Borel subgroup (since any non-central element lies in at most two Borel
subgroups, this clearly can be done). By Theorem~\ref{thm:gur},
$[x,y]$ is not unipotent.

Next suppose that $G=G^\circ$ is connected. If $C$ contains semisimple
elements, we can choose $x \in C \cap T$ where $T$ is a maximal torus and so
$x$ does not centralize some root subgroup $U$.
Let $U^-$ be the corresponding negative root subgroup. Then
$\langle T,U,U^-\rangle$ is a product of a torus with an $A_1(k)$,
whence the result follows.

If $C$ is unipotent, by closure we may assume that it consists of root
elements. Then we just work in $\langle U,U^-\rangle$ where $U$ is a root
subgroup generated by elements of $C$ and again the result follows by the
$A_1(k)$ result.

Finally suppose that $G$ is disconnected. We may assume that $C$ consists of
outer automorphisms, and by closure that it is closed. In particular, elements
of $C$ are quasi-semisimple by Theorem~\ref{thm:closedclasses}. By definition,
any such element normalizes, but does not centralize, a maximal torus of
$G^\circ$, and thus there exist non-trivial semisimple commutators of elements
in $C$.
\end{proof}

It follows easily from the previous result that if $C$ is a noncentral
conjugacy class of $\ell$-elements for some prime $\ell$, then commutators
of pairs of elements of $C$ cannot consist only of $\ell$-elements (this is
related to the question of Shumyatsky).  

If we take distinct classes, there are examples in \cite{GMT} showing that
the commutators could always be unipotent. Arguing as in \cite{GMT}, we can
actually classify all such pairs.

\begin{thm}   \label{thm:commut}
 Let $G$ be an almost simple algebraic group over an algebraically closed
 field $k$ of characteristic $p \ge 0$. Let $C,D$ be non-central
 $G^\circ$-conjugacy classes in $G$ with $G=\langle C,D\rangle$. Assume that
 $C,D$ do not lie in distinct cosets of order~2 if $G=D_4(k).\fS_3$. Then
 $$[C,D]:=\{[x,y] \mid x\in C,\,y\in D\}$$
 is the union of finitely many conjugacy classes if and only if $C,D$ are as
 given in Theorem~\ref{thm:mainA} or as in Theorem~\ref{thm:mainB}\textrm{(a)--(j)}.
 In particular, $C\ne D$ in this case.
\end{thm}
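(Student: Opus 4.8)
\textbf{Proof proposal for Theorem~\ref{thm:commut}.}

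The plan is to prove the two directions separately, leveraging the classifications already established. For the ``if'' direction, suppose $C,D$ are as in Theorem~\ref{thm:mainA} or Theorem~\ref{thm:mainB}(a)--(j). In every such case $C_1C_2$ is a finite union of $G^\circ$-classes. Since $[x,y]=x^{-1}\cdot(x^{-1})^y$ and $(x^{-1})^y$ ranges over $C^{-1}$ as $y$ ranges over $D$ (for fixed $x$), we have $[C,D]\subseteq C^{-1}C^{-1}$ only after a conjugation trick; more cleanly, $[C,D]=\bigcup_{x\in C}x^{-1}(C^{-1})^{\langle y\rangle}$, so $[C,D]\subseteq C^{-1}D'$ where $D'=\{(x^{-1})^y\}$ lies in finitely many classes. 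The cleanest route is to observe that $[C,D]$ is the image of the morphism $w\colon C\times D\to G$, $w(x,y)=[x,y]=x^{-1}(x^{-1})^y$, and that this image is contained in the product $C^{-1}\cdot (C^{-1})$ where the second factor is the class $C^{-1}$; but since the relevant examples are precisely arranged so that $\langle x,y\rangle$ normalizes a Borel subgroup (by the ``Moreover'' clauses of Theorems~\ref{thm:mainA} and~\ref{thm:mainB}, together with the structure of the examples), every commutator $[x,y]$ is unipotent and lies in a fixed Borel. Then $[C,D]$ consists of unipotent elements, hence (being the image of an irreducible variety, or a finite union thereof) meets only finitely many unipotent classes by the standard fact that the unipotent variety has finitely many classes. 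This handles the ``if'' direction once one checks, case by case using the explicit descriptions, that $\langle x,y\rangle$ always normalizes a common Borel, which is immediate from Examples~\ref{ex:graph-trans}--\ref{exmp:D4.S3} and the connected-case examples of \cite{GMT}.

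For the ``only if'' direction, suppose $[C,D]$ is a finite union of classes. As in the proof of Theorem~\ref{thm:mainC}, since $C\times D$ is irreducible, the semisimple part of $[x,y]$ is constant on $C\times D$; and since $[C,D]$ is conjugation-invariant and contains the semisimple part of each of its elements in its closure, we may pass to closures and assume $C,D$ are closed, hence (by Theorem~\ref{thm:closedclasses}) that $C,D$ consist of quasi-semisimple elements, or — after a separate argument — that they are unipotent or semisimple. The key reduction is to relate $[C,D]$ back to products: if $[C,D]$ meets only finitely many classes, then for a fixed $x\in C$, the set $x^{-1}(C^{-1})^D$ (a subset of the product of the class $C^{-1}$ with a conjugate of $C^{-1}$, twisted by $D$) meets finitely many classes; more precisely one shows that finiteness of $[C,D]$ forces finiteness of $C'D'$ for suitable related classes $C',D'$, and then invokes Theorem~\ref{thm:mainB} to pin down $C',D'$ — and hence $C,D$ — to be in the listed families. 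Concretely, I would argue: finiteness of $[C,D]$ implies the map $C\times D\to G$, $(x,y)\mapsto[x,y]$, has image in finitely many classes, so the fibers have dimension $\ge\dim C+\dim D-N$ where $N$ is the maximal dimension of a class hit; comparing with the generic fiber dimension of the analogous map for $CD$ (using that $[x,y]=x^{-1}(x^{-1})^y$ and $\dim\{(x^{-1})^y:y\in D\}=\dim D-\dim(D\cap C_G(x^{-1}))$) yields that $CD$ (or a product of closely related classes) also meets only finitely many classes, whence Theorem~\ref{thm:mainB} applies. One then eliminates case 3(k) by the excluded-case hypothesis and checks that the remaining cases (a)--(j) of Theorem~\ref{thm:mainB}, and all cases of Theorem~\ref{thm:mainA}, are exactly the ones that survive — in particular that $C=D$ is impossible, which follows from Theorem~\ref{thm:mainC} (a noncentral class $C$ has $[C,C]$ infinite).

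\textbf{Main obstacle.} The delicate point is the precise passage between finiteness of the \emph{commutator} set $[C,D]$ and finiteness of a suitable \emph{product} $C'D'$ of classes, so that the already-proved Theorem~\ref{thm:mainB} can be brought to bear. In one direction this is a dimension count on the fibers of $(x,y)\mapsto[x,y]$ versus $(x,y)\mapsto xy$; in the other it is the observation that $[x,y]$ being unipotent forces, via the $A_1$-subgroup arguments of the proof of Theorem~\ref{thm:mainC} applied locally, that $x$ and $y$ cannot generate ``too large'' a group. I expect the bookkeeping to mirror \cite[\S5]{GMT} closely, with the disconnected cases requiring the quasi-semisimple-normalizes-but-doesn't-centralize-a-torus argument (as in the last paragraph of the proof of Theorem~\ref{thm:mainC}) to rule out spurious outer examples, and the $D_4(k).\fS_3$ case handled by the explicit hypothesis together with Section~\ref{sec:D4.S3}.
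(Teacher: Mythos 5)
There are two genuine gaps. First, your ``if'' direction rests on the claim that in all listed cases every commutator is unipotent because $\langle x,y\rangle$ normalizes a common Borel subgroup. That inference is only valid for connected $G$ (where $x,y\in B$ gives $[x,y]\in[B,B]=R_u(B)$) and for the unipotent cases of Theorem~\ref{thm:mainA} (where $\langle x,y\rangle$ is literally a unipotent group). In the disconnected, non-unipotent cases (e)--(j) of Theorem~\ref{thm:mainB} it is false: two elements normalizing a Borel subgroup and a maximal torus of $G^\circ$ can have a nontrivial \emph{semisimple} commutator --- indeed that is exactly the mechanism the paper exploits to prove infiniteness for pairs of outer classes, and, e.g., in case (e) the commutator $x^{-1}x^{y}$ of a pseudo-reflection $x$ with a graph automorphism $y$ is generically not unipotent. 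The correct argument for those cases is that $G^\circ$ has \emph{finitely many orbits} on $C\times D$ (Examples~\ref{exmp:graph-refl}--\ref{exmp:D4andE6}, Propositions~\ref{prop:orb1} and~\ref{prop:orb2}), which immediately bounds the number of classes in $[C,D]$; your proposal never invokes this.

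Second, and more seriously, the ``only if'' direction hinges on an unproved bridge: that finiteness of $[C,D]$ forces finiteness of a product $C'D'$ of ``suitable related classes'', obtained by comparing fiber dimensions of $(x,y)\mapsto[x,y]$ and $(x,y)\mapsto xy$. No such soft implication can exist: Example~\ref{exmp:commD4} and Proposition~\ref{prop:commD4} show that in $D_4(k).\fS_3$ one has classes with $[C,D]$ a \emph{single} class while $CD$ meets infinitely many classes; excluding case 3(k) by hypothesis removes this family from the theorem's conclusion but not from your method, since a dimension count blind to the coset structure would apply there too and yield a false statement. The paper does not argue this way at all: it reruns the classification case analysis directly for the commutator map, using (a) constancy of the semisimple part of $[x,y]$ by irreducibility of $C\times D$, (b) Theorem~\ref{thm:gur} (the rank-one commutator theorem) to settle the $\SL_2$ base case, together with Lemma~\ref{lem:wreath} for the extra $\Sp_4$, $p\ne2$ configuration, (c) explicit torus computations (with $S=[T,y^{-1}]$, $R=[x,S]$, so that $[x,Sy]\subseteq[C,D]\cap T$ is infinite) to eliminate pairs of outer classes, and (d) finite-group computations in $\fS_8$, $\PGL_2(7)$ and $\GO_8^+(2)$ for pairs of unipotent classes in disconnected groups. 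Your sketch gestures at (b) and (c) but leaves the decisive reduction --- the very step you flag as the ``main obstacle'' --- unestablished, so the proof is incomplete as it stands.
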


\begin{proof}
Let $w:C\times D\rightarrow G$ be as in the previous proof. By irreducibility
the semisimple part of elements in $[C,D]$ has to be constant.

We can now essentially follow the proof in \cite{GMT}, respectively the proofs
of Theorems~\ref{thm:mainA} and \ref{thm:mainB}. First assume that $G$ is
connected. For $G=\SL_2(k)$, choosing $(x,y)\in C\times D$ in the same Borel
subgroup, $[x,y]$ is clearly unipotent. On the other hand, choose $x$ and $y$
so that they are not contained in a common Borel subgroup. Then by
Theorem~\ref{thm:gur}, $[x,y]$ is not unipotent. So there's no example for
$\SL_2(k)$.

If $[C,D]$ consists of finitely many classes, the same is true for its
closure, so we may also replace $C$ and $D$ by any classes in their closures
and so assume that each is semisimple or unipotent. Moreover, if one of them
is a
unipotent class, we may assume that it consists of root elements. We can now
argue exactly as in the proof of \cite[Thm.~5.11]{GMT} to rule out every
configuration apart from those in \cite[Thm.~5.11(2)--(6)]{GMT}, except for
the case that $p\ne2$, $G=\Sp_4(k)$, and the semisimple part of $c$ is an
involution. But again by working inside the subsystem subgroup $A_1(k)^2$ we
see that $[C,D]$ can have infinitely many distinct semisimple parts by
Lemma~\ref{lem:wreath}.

In all the exceptions of \cite[Thm.~5.11(2)--(6)]{GMT}, any pair
$(x,y)\in C\times D$ lies in some common Borel subgroup of $G$ by
\cite[Cor.~5.14]{GMT}, whence the commutator $[x,y]$ is unipotent and so
$[C,D]$ consists of finitely many (unipotent) classes.
\par
Now assume that $G$ is not connected, but $C,D$ are unipotent. Then the same
would have to hold for all classes in $[C,D]$, since all unipotent elements
have conjugates in a maximal unipotent subgroup of the normalizer of a
Borel subgroup. Following the arguments in the proof of
Theorem~\ref{thm:mainA} we see that the assertion follows once we show
that the only pairs of 2-power classes in $\fS_8$, $\PGL_2(7)$, $\GO_8^+(2)$
with commutator consisting of 2-elements are those in the conclusion.
This can be checked by direct computation.
\par
Finally assume that $G$ is disconnected and not both $C,D$ are unipotent. 
Here, we go through the proof of Theorem~\ref{thm:mainB}. If both classes
are outer (and quasi-semisimple), then we may find representatives $x,y$
normalizing a maximal torus $T$, as in the proof of Lemma~\ref{lem:bothout}.
First assume that $x,y$ lie in the same cyclic subgroup of $G/G^\circ$.
Then with $S=[T,y^{-1}]$ and $R=[x,S]$ we have
$[x,Sy]=\{x^{-1}y^{-1}xry\mid r\in R\}\subseteq [C_1,C_2]\cap T$, which
is always infinite, since $x$ may be chosen not to centralize the commutator
space of $y^{-1}$. The only remaining case is when $G=D_4.\fS_3$ with
$x$, $y$ in non-trivial cosets of different order. Here, the cube of the
commutator $[x,Sy]$ hits infinitely many classes in $T$ since
$\{r\in R\mid r^{z^2}r^zr\}$ is not finite, for $z=[y^{-1},x]$.
\par
Thus, $C$ is inner, say. As in the proof of Lemma~\ref{lem:An} we are done
for $A_n(k)$ once we have shown the claim for $n=2$ and $n=3$, where it is a
direct matrix calculation. The arguments for all other types go through
unchanged.
\end{proof}

In the case of commutators between $G^\circ$-classes in two different cosets
of order~2 in $D_4(k).\fS_3$, there are many further examples, see
Example~\ref{exmp:commD4} below.

\section{Double cosets, products and commutators}   \label{sec:DNK}

The aim of this section is the Characterization Theorem~\ref{thm:main} for
various finiteness properties. For this we need to investigate some of the
examples from the previous three sections a bit more closely.

\begin{prop}   \label{prop:orb1}
 Let $G=\GO_{2n}(k)$, $n\ge3$, with $k$ algebraically closed of
 characteristic~$p$. Let $x\in G^\circ$ be unipotent such that either
 \begin{enumerate}[\rm(1)]
  \item $p\ne 2$ and all Jordan blocks of $x$ have size at most~$2$; or
  \item $p=2$, $x^2 =1$ and $(xv,v)$=0 for all $v$ in the natural module $V$
   for $G$ (under the associated alternating form).
 \end{enumerate}
 Then $C_G(x)$ has at most two orbits on nondegenerate 1-spaces in $V$.
\end{prop}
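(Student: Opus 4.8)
The plan is to analyze the action of $C_G(x)$ on nondegenerate $1$-spaces directly through the Jordan/orthogonal structure of $x$, reducing everything to a count of orbits on a certain vector (or coset of vectors) in a controlled space. First I would dispose of case~(1), $p\neq 2$. Here $x$ has only Jordan blocks of sizes $1$ and $2$, so $V = V_1 \perp V_2$ where $V_1 = \ker(x-1)$ restricted to a maximal nondegenerate piece and $V_2$ carries the size-$2$ blocks; more precisely, write $V = [x,V] \oplus C_V(x)$ at the level of the unipotent decomposition and observe that both summands are nondegenerate since $p\neq 2$. The centralizer $C_G(x)$ then acts on $V$ respecting the filtration $0 \subseteq [x,V]\cap C_V(x) \subseteq \cdots$, and a nondegenerate $1$-space $\langle v\rangle$ is determined up to the $C_G(x)$-action by how $v$ distributes between the $x$-fixed part and the complement. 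The key point is that on the $x$-fixed nondegenerate subspace $W = C_V(x)$ (of dimension $2n - r$ where $r$ is the number of size-$2$ blocks, doubled appropriately), $C_G(x)$ induces the full orthogonal group $\GO(W)$ — because the centralizer of a unipotent with blocks of size $\le 2$ surjects onto the isometry group of the "core" — and $\GO(W)$ has a single orbit on nondegenerate $1$-spaces of each fixed norm class, but over an algebraically closed field all nonzero norms are equivalent, giving one orbit there. Vectors $v$ with nonzero component in the non-fixed part require a short separate argument showing they either cannot span a nondegenerate line (if $v\in[x,V]$, which is totally singular when $p \neq 2$? — one must check this) or are conjugate into $W$; this contributes the possible second orbit.

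For case~(2), $p=2$, the structure is governed by the alternating form: $x^2=1$ and $(xv,v)=0$ for all $v$ means $x$ is an involution in $\Sp_{2n}(k)$ that is "quasi-semisimple of transvection type" in the terminology used around Example~\ref{ex:ortho p=2}, i.e. $[x,V]$ is totally singular with respect to the alternating form (this is exactly the condition $(xv,v)=0$). I would then set $W = [x,V]^{\perp}/[x,V]$, a nondegenerate orthogonal space of dimension $2n - 2\dim[x,V]$, on which $C_G(x)$ acts with image containing $\GO(W)$, and analyze a nondegenerate line $\langle v\rangle$ according to whether $v\in [x,V]^\perp$ or not; note $v\notin[x,V]$ automatically since $[x,V]$ is totally singular hence degenerate. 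When $v\in[x,V]^\perp$, its image in $W$ is a nonzero vector (nondegeneracy of $\langle v\rangle$ forces $Q(v)\neq 0$ in characteristic $2$, where $Q$ is the quadratic form), and $\GO(W)$ is transitive on nonsingular vectors of $W$ over $\bar k$, giving one orbit; plus the unipotent radical of $C_G(x)$ can be used to normalize the $[x,V]$-component of $v$, so the whole orbit is pinned down. When $v\notin[x,V]^\perp$, a parabolic/Witt argument puts $\langle v\rangle$ into the previous case after adjusting by the unipotent part of $C_G(x)$, or else shows this case is empty; at worst it contributes the second orbit.

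In both cases the core structural input I would cite or verify is the description of $C_G(x)$ as an extension $U \rtimes (\GO(W)\times (\text{small factor}))$ with $U$ unipotent, $W$ the nondegenerate "defect space", together with the fact that over an algebraically closed field $\GO(W)$ is transitive on nondegenerate (= nonsingular, in char.~$2$) $1$-spaces. This is standard (Wall's classification of centralizers in classical groups; see also the discussion in \cite{AS}), and I would invoke it rather than reprove it. The main obstacle — and the only genuinely delicate point — is the bookkeeping of the $U$-action on the "lower-filtration" component of a representative vector $v$: one must check that $U$ acts transitively enough on that component (given a fixed image in $W$) to collapse what might naively look like a family of orbits down to one, and simultaneously verify that vectors with nonzero component entirely inside $[x,V]$ (char.~2) or inside $[x,V]$-type subspaces (char.~$\neq 2$) either fail to span a nondegenerate line or fall into an already-counted orbit. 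Getting the exact count to be \emph{two} rather than \emph{one} hinges on whether there genuinely is a second orbit-type (vectors "transverse" to the defect space), and I expect a short case check — possibly by exhibiting explicit representatives in small rank, as the paper does elsewhere — will settle it; I would organize the writeup so that the generic orbit is handled uniformly and the possible exceptional second orbit is isolated in one clearly-marked paragraph.
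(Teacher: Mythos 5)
There is a genuine gap, and it starts with the structural claim on which your case~(1) rests. If all Jordan blocks of $x$ have size at most $2$, then $(x-1)^2=0$, so $[x,V]=\mathrm{im}(x-1)\subseteq\ker(x-1)=C_V(x)$: the proposed decomposition $V=[x,V]\oplus C_V(x)$ is not a direct sum, and neither summand is nondegenerate --- $[x,V]$ is totally singular (for $p\ne2$ one computes $((x-1)v,(x-1)w)=0$ directly; for $p=2$ the hypothesis $(xv,v)=0$ says exactly that $Q((x-1)v)=(xv,v)=0$, i.e.\ totally singular for the quadratic form, not merely for the alternating form), and $C_V(x)$ has radical $[x,V]$. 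Consequently $C_G(x)$ does not induce an orthogonal group on $C_V(x)$, and the dichotomy ``component in the fixed part versus component outside it'' on which your orbit count is based is not well founded. The correct splitting is the one by block sizes: $V=V_1\perp V_2$ with $x$ trivial on $V_1$ and all blocks of size exactly $2$ on $V_2$; on $V_2$ the fixed space $U=C_{V_2}(x)=[x,V_2]$ is a \emph{maximal} totally singular subspace, $x$ sits in the unipotent radical $Q$ of its stabilizer as a full-rank alternating form, and $C_{\GO(V_2)}(x)=Q.\Sp(U)$. The transitive group one must exploit is $\Sp(U)$ on $U\setminus\{0\}$ (together with the $Q$-action), not an orthogonal group on a ``core''; this is precisely the ingredient your sketch never identifies.

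The second, decisive gap is that the bound ``at most two orbits'' is nowhere actually proved: for the vectors transverse to the defect space you write ``at worst it contributes the second orbit'' and ``I expect a short case check will settle it'', but that count \emph{is} the proposition. Note also that in case~(2) the space $W=[x,V]^{\perp}/[x,V]$ can be zero (when $[x,V]$ is maximal totally singular), so the ``generic'' orbit you describe may not exist at all and every nondegenerate line falls into the case you leave open. The paper closes exactly this hole by a reduction in rank rather than by analyzing $U$-transitivity in place: using $\GO(V_1)$-transitivity on $V_1$ and the $Q.\Sp(U)$ description on $V_2$, any nondegenerate $1$-space is moved into an $x$-invariant nondegenerate subspace of dimension at most $6$, and the orbit count is then checked directly in $\GO_4(k)$ (one orbit) and $\GO_6(k)$ (two orbits, visible over $\FF_q$ from the orbit lengths $q^3(q^2-1)$ and $q^2(q-1)$). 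Some such concrete verification, or an explicit transitivity argument for $Q.\Sp(U)$ on the relevant cosets, is needed to turn your outline into a proof.
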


\begin{proof}
We claim that any nondegenerate 1-space is conjugate under $C_G(x)$ to an
element in a nondegenerate 4-space or 6-space that is $x$-invariant.

Write $V = V_1 \perp V_2$  where $V_1$ and $V_2$ are $x$-invariant and $x$ is
trivial on $V_1$ and has all Jordan blocks of size exactly $2$ on $V_2$.

It suffices to deal with each space separately. In the first case the
centralizer contains $\GO(V_1)$ and we can move any 1-space into any
nondegenerate 2-space. So we need to consider $V_2$. We claim that any vector
is $C_G(x)$-conjugate to an element of some $x$-invariant nondegenerate 4-space.

In both cases the fixed space of $x$ contains a maximal totally singular
isotropic space. Thus we can write $V_2 = U\oplus U'$ where $U$ and $U'$ are
maximal complementary totally isotropic spaces, with $U$ the fixed space
of $x$. Thus $x$ is trivial on $V_2/U$ as well and so $x$ is in the
unipotent radical $Q$ of the stabilizer of $U$. Then $x$ corresponds to a
full rank skew symmetric matrix in $Q$. Thus, the centralizer of $x$ is
$Q.\Sp(U)$ with $\Sp(U)$ acting isomorphically on both $U$ and $U'$.
Since $\Sp(U)$ is transitive on non-zero vectors of $U$, it is easy to see
that given $v = u + u'$, we can move $u,u'$ into a nondegenerate $x$-invariant
4-space.

Thus we are reduced to $\GO_4(k)$ and $\GO_6(k)$. It is straightforward to
check that in the first case there is a unique orbit, while in the second
case there are two orbits (as can be seen for example by checking that over
finite fields $\FF_q$, there are two orbits, of lengths
$q^3(q^2-1),q^2(q-1)$).
\end{proof}


\begin{prop}   \label{prop:orb2}
 \hskip 1pc{ }
 \begin{enumerate}
  \item[\rm(a)] Let $G=D_4(k).3$, the extension by a graph automorphism, and
   $x\in G^\circ$ a long root element. Then
   $|G_2(k)\backslash G^{\circ}/C_{G^{\circ}}(x)|=5$
   and $|G_2(k)\backslash G/C_G(x)|=3$.
  \item[\rm(b)] Let $G=E_6(k).2$, the extension by a graph automorphism, and
   $x\in G^\circ$ a long root element. Then $|F_4(k)\backslash G/C_G(x)|=2$.
 \end{enumerate}
\end{prop}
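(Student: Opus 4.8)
The plan is to reduce the double-coset count to an orbit-counting problem for the reductive centralizer $C_G(\tau)$ acting on a homogeneous space of $G^\circ$, and then to resolve that orbit problem by working in a small finite group.

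First I would observe that both statements amount to counting orbits of a subgroup $H$ (namely $G_2(k)$ or $F_4(k)$, the centralizer of a graph automorphism $\tau$) on the variety $G^\circ/C_{G^\circ}(x)$, which is the conjugacy class $x^{G^\circ}$ of a long root element. Equivalently, $|H\backslash G^\circ/C_{G^\circ}(x)|$ counts the $H$-orbits on long root elements of $G^\circ$. Since $H=C_{G^\circ}(\tau)$ and $\tau,g\tau$ are $G$-conjugate for suitable $g\in H$ (this is exactly the hypothesis exploited in Proposition~\ref{prop:rel} and Examples~\ref{exmp:D4} and~\ref{exmp:E6}), the relevant count is closely tied to the $H$-classes of long root elements of $G^\circ$ that lie in $H$: by Richardson-type arguments every long root element of $G^\circ$ is $G^\circ$-conjugate into $H$ (both $G_2$ inside $D_4$ and $F_4$ inside $E_6$ contain representatives of the long root class), and the fibers of the map from $H$-classes to $G^\circ$-classes are controlled by $C_{G^\circ}(x)$. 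So the first reduction is: enumerate the $H$-conjugacy classes of elements of $H$ that are long root elements of the ambient group, then account for the further fusion.

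Next, for part~(a) I would pass to a convenient finite subgroup, namely $D_4(q).3 \ge G_2(q)$ for a suitable prime power $q$ (in characteristic $3$, e.g. $q=3$, to match Example~\ref{exmp:D4}), and literally count: the $G_2(q)$-orbits on the set of long root elements of $D_4(q)$, and separately the $G_2(q)$-orbits on the long root elements of the full (disconnected) group-completion to get the $|G\backslash$-version with $3$ rather than $5$. The outer factor $\fS_3$ (resp. the graph automorphism of order $3$) fuses certain $G^\circ$-classes of long root elements — the three classes of ``second smallest'' unipotent elements in $D_4$ are permuted by the triality, as already used in the proof of Theorem~\ref{thm:mainA} — and this collapse is exactly what takes $5$ down to $3$. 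The count of $5$ can be cross-checked against the geometry: $G_2(k)\cong C_{D_4(k)}(\tau)$ acts on the $8$-dimensional orthogonal space, and long root elements of $D_4$ correspond to isotropic $1$-spaces (in characteristic $3$, transvection-like elements), on which $G_2$ has orbits distinguished by incidence with the invariant alternating/trilinear structure. For part~(b) the same strategy applies with $F_4(q) \le E_6(q).2$: here one expects only $2$ orbits, reflecting that $F_4(k)=C_{E_6(k)}(\tau)$ is transitive on long root elements of $E_6$ up to the single extra orbit coming from those long root elements already inside $F_4$ (compare Example~\ref{exmp:E6}, where the product class $D=u_{15}^{G^\circ}$ is dense, forcing exactly two $G$-orbits on $C_1\times C_2$, hence $|F_4(k)\backslash G/C_G(x)|=2$).

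The main obstacle I anticipate is the bookkeeping of \emph{fusion}: one must be careful that the orbit counts computed in the finite groups $D_4(q).3$ and $E_6(q).2$ genuinely coincide with those in the algebraic groups, and that no algebraic-group orbit splits or merges upon restriction to the finite subgroup. This is handled by noting that long root elements form a single $G^\circ$-class whose representatives and centralizers are defined over $\FF_q$ and that $C_{G^\circ}(x)$ and $H$ are connected (so their $\FF_q$-points are ``large enough'' by Lang's theorem to realize every algebraic orbit), exactly the kind of descent argument used implicitly throughout Section~\ref{sec:products}. Once this is in place, the explicit orbit counts are a finite computation (e.g.\ with character-theoretic or permutation-group software on $D_4(3).3$ and a suitable $E_6(q).2$), which I would not reproduce in detail.
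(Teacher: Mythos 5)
Your overall plan (count the $H$-orbits, $H=G_2(k)$ or $F_4(k)$, on the class of long root elements, use finite subgroups to pin the number down, and note that triality fuses classes to pass from $5$ to $3$ in case (a)) points in the same general direction as the paper, but two of your key reductions do not work as stated. First, the step ``enumerate the $H$-conjugacy classes of elements of $H$ that are long root elements of the ambient group, then account for the further fusion'' is not a valid parametrization of the $H$-orbits: most of the orbits contain no element of $H$ at all. In the paper's proof of (a) only one of the five $G_2(k)$-orbits lies inside $G_2(k)$ (the long root elements of $G_2$); the other four (three $8$-dimensional orbits permuted by $\sigma$, whose $\sigma$-products are \emph{short} root elements of $G_2$, plus the dense orbit with $4$-dimensional stabilizer) consist of long root elements outside $H$, and the fact that every long root element is $G^\circ$-conjugate into $H$ tells you nothing about $H$-orbits. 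The actual proof constructs explicit representatives of all five orbits and computes their stabilizers in $H$; the same is done in (b), where the dense $F_4$-orbit is exhibited via an explicit root $\al=(111221)$ and its centralizer $U.A_3(k)$ is computed.

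Second, your descent from a machine count in $D_4(3).3$ or $E_6(q).2$ to the algebraic group is not justified by the connectedness of $C_{G^\circ}(x)$ and $H$. What controls the splitting of an algebraic $H$-orbit into $H(\FF_q)$-orbits is the component group of the \emph{point stabilizer} $C_H(x')$ (together with $F$-stability of the orbit), and these stabilizers are exactly the delicate objects: the paper must prove that $C_{F_4(k)}(x)$ is connected for the dense orbit in (b), using Shoji's tables for $p\ne2$ and a direct computation in $E_6(2)$ for $p=2$, and elsewhere in the paper (Lemma~\ref{lem:D4.S3}) disconnected stabilizers really do make single algebraic orbits break into several rational orbits. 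Without that input a raw orbit count in the finite group can over- or under-count the algebraic orbits, so your ``finite computation plus Lang'' step is the missing core of the argument; the paper instead uses the finite field only as a \emph{completeness check}, summing the lengths of the explicitly constructed orbits ($q^6-1$, three times $q^2(q^6-1)$, and $q^2(q^2-1)(q^6-1)$ in case (a); $(q^4+1)(q^{12}-1)$ and $q^3(q^4+1)(q^{12}-1)$ in case (b)) against the total number of long root elements, and then invokes the argument of \cite[Ex.~6.2]{GMT} to pass to an arbitrary algebraically closed field (note the proposition is needed in all characteristics, so a computation in characteristic $3$ alone would not suffice for (a)). Finally, in (b) your inference ``$D$ is dense in $C_1C_2$, forcing exactly two orbits'' is a non sequitur: density produces one dense orbit, but bounding the orbits in its complement is precisely what the stabilizer computation and the counting argument are for.
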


\begin{proof}
First consider $G=D_4(k).3$, the extension by the graph automorphism $\sg$ of
order~3. Let $B$ be a $\sg$-stable Borel subgroup of $G^\circ$. Let $P$ be the
normalizer in $G^\circ$ of a $\sg$-stable root subgroup, a maximal parabolic
subgroup of type $A_1^3$. Its derived subgroup $P'$ is the centralizer of a
long root element in $G^\circ$. In order to show that $G_2(k)\backslash G/P'$
is finite, it suffices to see that $G_2(k)$ has finitely many orbits on the
set of long root elements of $G^\circ$. We claim that there are precisely five
orbits. Clearly, there's one orbit of long root elements (in $B$) centralized
by $\sg$; these are long root elements in $H=C_{G^\circ}(\sg)\cong G_2(k)$ and
thus have centralizer $Q'$ in $H$, where $Q$ is a maximal parabolic subgroup
of $H$.
Furthermore, there are three orbits of long roots fused under $\sg$, such
that the product over any $\sg$-orbit is a short root element in $H$. These
have centralizer $U_3.A_1(k)$, with $U_3$ unipotent of dimension~3.
Finally, let $x\in U=R_u(B)$ be the product of a root element with support~2
(on the simple roots of $G^\circ$ with respect to $B$) with a commuting root
element with support~3 . Then the centralizer of $u$ in $U\cap H$ has
dimension~4. The product $uu^\sg u^{\sg^2}$ is a unipotent element
($x_{a+b}(1)x_{2a+b}(1)$ with respect to the simple roots $a,b$ of $H$)
with 4-dimensional unipotent centralizer in $H$. Thus $\dim(C_H(u))=4$.
(This is the element constructed in Example~\ref{exmp:D4andE6} when $p\ne3$,
respectively in Example~\ref{exmp:D4} when $p=3$.)  \par
For $p>0$, over the field with $q=p^a$ elements the corresponding
$G_2(q)$-orbits in $D_4(q).3$  have lengths $q^6-1$, $q^2(q^6-1)$ (3 times),
and $q^2(q^2-1)(q^6-1)$, which adds up to the number of long root elements,
so we've accounted for all orbits in this case. By arguing as in
\cite[Ex.~6.2]{GMT} we conclude that the claim holds over any algebraically
closed field. Note that since $\sg$ permutes the three orbits of dimension~8,
we obtain three orbits under $C_G(\sg)$.
\par
Now consider $G=E_6(k).2$, the extension by the graph automorphism $\sg$ of
order~2. The centralizer of a long root element in $G^\circ$ is the derived
subgroup $P'$ of a parabolic subgroup $P$ of $G^\circ$ of type $A_5$. The group
$H=C_{G^\circ}(\sg)\cong F_4(k)$ has one orbit on long root elements contained
in $H$, so centralized by $\sg$, with centralizer $Q'$, where $Q$ is an
end-node parabolic subgroup of $H$. Now let $x$ be a long root element not
fixed by $\sg$, corresponding to the root $\al=\sum_{i=1}^6 a_i\al_i$, with
$\al_1,\ldots,\al_6$ the simple roots of $E_6$ in the standard numbering and
$(a_1,\ldots,a_6)=(1,1,1,2,2,1)$. This root subgroup is normalized by a maximal torus of $H$,
and also centralized by all but three positive root subgroups in $H$. Moreover,
$\al$ is centralized by a subgroup of type $W(A_3)$ in the Weyl group of $H$.
Thus $C_H^\circ(x)=U.A_3(k)$ with $U$ unipotent of dimension~15, and the
$H$-orbit of $x$ has dimension~$22=\dim E_6(k)-\dim P'$, so it is the dense
orbit. Note that the $\sg$-conjugate root element $x^\sg$ (in the root
subgroup corresponding to $(112211)$) has exactly the same connected
centralizer in $H$, and for $p\ne2$, $C_H(x x^\sg)=U.A_3(k).2$ is an extension
of $C_H^\circ(x)$ of degree~2 (see \cite{Sh74}). Since $W(F_4)$ contains an
element interchanging the root subgroups of $x$ and $x^\sg$, we deduce that
$C_H(x)=U.A_3(k)$ is connected for $p\ne2$. A direct computation in $E_6(2)$
shows that $C_H(x)$ is also connected in characteristic~2. 
\par
In the finite group over $\FF_q$, the lengths $(q^4+1)(q^{12}-1)$ and
$q^3(q^4+1)(q^{12}-1)$ of these two orbits add up to the total number of long
root elements. We conclude as in the previous case. 
\end{proof}

We now turn to the proof of Theorem~\ref{thm:main} from the introduction.
One step is given by the following result:

\begin{lem}   \label{lem:inBorel}
 Let $G$ be an almost simple algebraic group over an algebraically closed
 field, and let $C,D$ be $G^{\circ}$-classes.  If every
 $(x_1,x_2)\in C\times D$ normalizes some Borel subgroup of $G^\circ$
 then $CD$ is a finite union of $G^{\circ}$-conjugacy classes.
\end{lem}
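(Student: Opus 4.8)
The plan is to reduce to a finite-dimensional statement about double cosets and then use a dimension argument together with Lemma~\ref{lem:dense}. First I would observe that since every $(x_1,x_2)\in C\times D$ normalizes some Borel subgroup of $G^\circ$, a fixed pair $(x_1,x_2)$ lies in $N_G(B)$ for some Borel $B=TU$ of $G^\circ$; replacing the pair by a $G^\circ$-conjugate, we may assume $B$ is a fixed chosen Borel. The key point is that $N_G(B)/U \cong N_G(B)\cap N_G(T)$ modulo $U$ behaves like a torus-by-finite group, so the product of the images of $x_1$ and $x_2$ there ranges over only finitely many classes; all the ``infinitely many classes'' phenomena would have to come from the unipotent radical, but $U$ contributes nothing new to semisimple parts. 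More precisely, writing $x_ix_j$ modulo $U$, the semisimple part of $x_1x_2$ is controlled by the image in $N_G(B)/U$, which has only finitely many conjugacy classes meeting any given pair of cosets (here one uses that an outer quasi-semisimple element acts on $T$ with finitely many possibilities up to the relevant Weyl-type group action, and that two elements of $T$ have product in $T$). Since $CD$ is a union of $G^\circ$-classes and a $G^\circ$-class is determined up to finitely many choices by the semisimple part of its elements (closures of unipotent classes being finite), finiteness of the set of semisimple parts occurring in $CD$ gives finiteness of $CD$ itself.

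Carrying this out, the steps in order would be: (i) fix a Borel $B=TU$ and show every element of $CD$ is $G^\circ$-conjugate to an element of $N_G(B)$ — this needs the hypothesis applied to pairs, plus the fact that if $x_1,x_2\in N_G(B')$ then $x_1x_2\in N_G(B')$, and all Borels are $G^\circ$-conjugate; (ii) analyze $N_G(B)=U\rtimes (N_G(B)\cap N_G(T))$ and show that the set of semisimple parts of elements $x_1x_2$ with $x_i\in C\cap N_G(B)$, $x_2\in D\cap N_G(B)$ is finite, by passing to the quotient $N_G(B)/U$ and noting it is an extension of $T$ by a finite group, so products of two prescribed ``cosets'' (the images of $C$ and $D$) give finitely many $T$-conjugacy classes since $T$ is abelian and the finite part acts with finitely many outcomes; (iii) conclude via the standard fact (used repeatedly in this paper, cf. the reasoning after Theorem~\ref{thm:Spa} and in the proof of Theorem~\ref{thm:mainC}) that a $G^\circ$-class is determined by its semisimple part up to finitely many unipotent classes in a coset, hence $CD$ meets finitely many classes.

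The main obstacle I expect is step (ii): controlling the semisimple parts when the classes are outer. If $x_1$ is outer quasi-semisimple it does not centralize $T$, so $x_1x_2$ for $x_2\in D\cap N_G(B)$ may wander through a positive-dimensional subset of $N_G(B)$; I would need to argue that although the set $\{x_1x_2\}$ is infinite, its image in the \emph{quotient} by $U$ lands in finitely many conjugacy classes — equivalently that the semisimple part is constant on the relevant subsets. Here one uses that quasi-semisimple elements of a fixed coset acting on $T$ have Jordan decomposition compatible with $T$, so their semisimple parts are pinned down by the coset and the action, and only finitely many such arise; the delicate case is $G^\circ$ of type $D_4$ with disconnected component group of order $3$ or $6$, where one should note that the hypothesis ``$x_1x_2$ normalizes a Borel'' already excludes the genuinely infinite configurations identified in Lemma~\ref{lem:bothout}. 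Thus the hypothesis does double duty: it gives the reduction in (i) and simultaneously rules out exactly the bad $D_4$ cases in (ii).
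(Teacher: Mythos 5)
Your reduction to a fixed Borel subgroup and your treatment of the case where one of the two classes is inner is essentially the paper's own argument: an inner element of $N_{G}(B)$ lies in $B$, its image in $T=B/U$ runs over only finitely many possible semisimple parts, hence the closure of $CD$ contains only finitely many quasi-semisimple classes, and one concludes because only finitely many $G^\circ$-classes lie above a given quasi-semisimple class (the paper quotes \cite[Lemma~5.1]{GMT} for this). The genuine gap is your step (ii) when \emph{both} classes are outer. The claim that, because $N_G(B)/U$ is torus-by-finite, the images of $C$ and $D$ there multiply into finitely many classes is false: for an outer class the intersection with $N_G(B)$ has \emph{infinite} image modulo $U$ (already the quasi-central class satisfies $\sigma^T=\sigma\{(t^{-1})^{\sigma}t : t\in T\}=\sigma[T,\sigma]$, a positive-dimensional set), and when the two outer cosets are inverse to each other the products land in $T$ and sweep out a coset of a positive-dimensional subtorus, i.e.\ infinitely many semisimple classes --- this is exactly the computation in Lemma~\ref{lem:bothout}. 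So in the both-outer case no argument modulo $U$ can work; what one must show is that the common-Borel hypothesis itself fails outside very specific configurations. Appealing to Lemma~\ref{lem:bothout} for this is not legitimate: that lemma classifies when $CD$ is a finite union of classes, which is precisely what you are trying to prove here, and it says nothing about which pairs normalize a common Borel subgroup. The paper does this step by hand: after passing to the closed quasi-semisimple classes in $\bar C,\bar D$, it reduces to graph-stable Levi subgroups of types $A_1(k)^2$, $A_2(k)$, $A_1(k)^3$ and checks there that a common invariant Borel does not always exist, except possibly when $G=D_4(k).\fS_3$ and $C,D$ lie in distinct cosets of outer involutions with $C$ quasi-central and $D$ as in Proposition~\ref{prop:D4.S3}(1).

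Moreover, even after all exclusions the both-outer case is not vacuous, contrary to what your last paragraph suggests: for $G=D_4(k).\fS_3$ with $C$ the reflections in one coset and $D$ as in Proposition~\ref{prop:D4.S3}(1) in another, every pair does normalize a common Borel subgroup (Lemma~\ref{lem:D4.S3}), so the hypothesis holds and the finiteness of $CD$ still has to be established; the paper obtains it from Proposition~\ref{prop:D4.S3}, ultimately from the explicit computation in Example~\ref{exmp:D4.S3a}, not from any general semisimple-part argument. As written, your proposal proves only the case where at least one class is inner; the exclusion argument for two outer classes and the surviving $D_4(k).\fS_3$ configuration are missing, and these are exactly the parts of the lemma that require the paper's classification machinery.
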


\begin{proof}
Let $(x_1,x_2)\in C\times D$ normalize the Borel subgroup $B$ of $G^\circ$.
If $x_1$, say, is inner, then conjugates of $x_1$ inside $B$ have only
finitely many distinct semisimple parts, so the possible products with $x_2$
have only finitely many quasi-semisimple classes in their closures, whence we
get~(i) by \cite[Lemma~5.1]{GMT} (note the argument shows that the number
of possible quasi-semisimple classes is independent of the Borel subgroup).

So now let's assume that $C$ and $D$ are both outer. We claim that this cannot
occur aside from the case that $C$ and $D$ are in distinct cosets of
outer involutions (whence $G = D_4(k).\fS_3$).

Note that under the assumption that every pair in $C \times D$ normalizes
a Borel subgroup, the conclusion holds if and only if it holds for
$C' \times D'$ where $C'$ and $D'$ are the closed quasi-semisimple classes
in $\bar{C}$ and $\bar{D}$ respectively.  So it suffices to assume that $C$
and $D$ are both quasi-semisimple. Note that our condition is also true for
powers of $C$ and $D$.

By choosing a parabolic subgroup that is invariant under the graph automorphism
and such that it still induces a graph automorphism on its Levi subgroup,
we can reduce to the cases $A_1(k)^2$, $A_2(k)$ or $A_1(k)^3$ (the latter
in $D_4(k)$). It is straightforward in each of these cases to
check that there is not always a common Borel subgroup unless possibly
$C$ and $D$ are in distinct cosets of graph automorphisms of order $2$.
Moreover, in the latter case, if neither $C$ nor $D$ have order $2$, then one
sees (as in the proof of Proposition~\ref{prop:D4.S3}), there will not always
be a common invariant Borel subgroup. So we may assume that $C$ consists of
graph automorphisms of order~$2$. Similarly, if $D$ is not as given in
Proposition~\ref{prop:D4.S3}(1), there will not always be a common invariant
Borel subgroup.  If $D$ does satisfy Proposition~\ref{prop:D4.S3}(1), then
in fact there are only finitely many classes in $CD$.
\end{proof}

We are now ready to prove Theorem~\ref{thm:main}, which is part of the
following statement:

\begin{thm}   \label{thm:mainlong}
 Let $G$ be an almost simple algebraic group over an algebraically closed
 field, and let $C,D$ be $G^\circ$-classes in $G$. 
 The following are equivalent:
 \begin{enumerate}[\rm(i)]
  \item $CD$ is a finite union of $G^\circ$-conjugacy classes.
  \item The closure of $CD$ contains a unique quasi-semisimple conjugacy class.
  \item $C_{G^\circ}(x_1)\backslash G^\circ/C_{G^\circ}(x_2)$ is finite for all
   $(x_1,x_2)\in C\times D$.
  \item $G^\circ$ has finitely many orbits on $C \times D$.
  \item $\langle x_1,x_2 \rangle$ normalizes some Borel subgroup of
   $G^\circ$ for every $(x_1,x_2)\in C\times D$.
  \item $C_{G^\circ}(x_1)C_{G^\circ}(x_2)$ is dense in $G^\circ$ for some
   $(x_1,x_2)\in C\times D$.
 \end{enumerate}
 Moreover, these conditions imply, and if $G \ne D_4.\fS_3$ are all
 equivalent to
 \begin{enumerate}[\rm(i)]
  \item[\rm(vii)] $[C,D]$ is a finite union of $G^\circ$  conjugacy classes.
 \end{enumerate}
\end{thm}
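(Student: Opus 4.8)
The plan is to establish the circle of equivalences (i)--(vi) first, and then derive the implication into (vii) separately, using the classification from Theorems~\ref{thm:mainA} and~\ref{thm:mainB} together with the ``common Borel'' statement in \cite[Cor.~5.14]{GMT}. For the equivalences, I would organize the argument around condition~(v), which is the most geometric. The implication (v)$\Rightarrow$(i) is exactly Lemma~\ref{lem:inBorel}, so that direction is already in hand. For (i)$\Leftrightarrow$(ii): if $CD$ is a finite union of classes, then so is its closure, and among those finitely many classes exactly one is closed, hence (by Theorem~\ref{thm:closedclasses}) quasi-semisimple --- one must check it is the unique quasi-semisimple class in the closure, which follows since by irreducibility of $C\times D$ the semisimple parts of elements of $CD$ lie in a single $G^\circ$-class, and Corollary~\ref{cor:unique} pins down the closed class in each coset. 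Conversely, (ii)$\Rightarrow$(i) is \cite[Lemma~5.1]{GMT}, which says finiteness of the number of quasi-semisimple classes in the closure forces $CD$ to be a finite union of classes.

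Next I would close the loop with the double-coset and orbit conditions. The orbit count in (iv) and the double-coset count in (iii) are essentially the same statement, since $G^\circ$-orbits on $C\times D$ correspond to $C_{G^\circ}(x_1)$--$C_{G^\circ}(x_2)$ double cosets in $G^\circ$ via $(x_1^g, x_2^h)\mapsto C_{G^\circ}(x_1)\,g^{-1}h\,C_{G^\circ}(x_2)$; so (iii)$\Leftrightarrow$(iv). For (iv)$\Rightarrow$(i): the product map $C\times D\to CD$ is $G^\circ$-equivariant, so finitely many orbits on the source forces finitely many orbits on the image. For (i)$\Rightarrow$(iv), or equivalently (i)$\Rightarrow$(v), I would invoke the classification: once $CD$ is a finite union of classes, Theorem~\ref{thm:mainB} (resp.\ Theorem~\ref{thm:mainA} in the unipotent case) tells us exactly which pairs $(C,D)$ occur, and in each listed case one checks directly --- the relevant double-coset / orbit counts are recorded in Examples~\ref{ex:graph-trans}--\ref{exmp:D4.S3} and in Propositions~\ref{prop:orb1},~\ref{prop:orb2}, and in \cite{GMT} for the connected cases --- that a common invariant Borel subgroup always exists and the orbit count is finite. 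Condition~(vi) is equivalent to (v) by a standard dimension/density argument: $C_{G^\circ}(x_1)C_{G^\circ}(x_2)$ is dense in $G^\circ$ for some pair iff the generic $G^\circ$-orbit on $C\times D$ has dimension $\dim C+\dim D$, which (together with the finiteness coming from the other conditions) is what (v) encodes via Lemma~\ref{lem:dense}; alternatively one argues that the double coset $C_{G^\circ}(x_1)gC_{G^\circ}(x_2)$ being dense for suitable $g$ is equivalent to the product of the two centralizers being dense, and this holds precisely when the two elements can be placed in a common Borel.

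For the final implication (i)$\Rightarrow$(vii), the clean route is again via the classification: by Theorem~\ref{thm:commut}, $[C,D]$ is a finite union of classes exactly when $(C,D)$ appears in Theorem~\ref{thm:mainA} or Theorem~\ref{thm:mainB}(a)--(j) --- which, apart from case 3(k), is the same list as for (i). So whenever (i) holds and $G\ne D_4.\fS_3$, we are in one of those cases and (vii) follows; and in those cases one has the sharper fact, via \cite[Cor.~5.14]{GMT} and Lemma~\ref{lem:inBorel}'s proof, that every pair $(x_1,x_2)\in C\times D$ lies in a common Borel subgroup, so $[x_1,x_2]$ is unipotent and thus $[C,D]$ meets only the finitely many unipotent classes in the relevant coset. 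For $G=D_4.\fS_3$ with $C,D$ in distinct cosets of order~2 --- the case 3(k) --- condition~(i) can hold while (vii) fails, which is why (vii) is only claimed to be equivalent when $G\ne D_4.\fS_3$; this discrepancy is exactly the content of Example~\ref{exmp:commD4} and is handled in Section~\ref{sec:D4.S3}.

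The main obstacle I anticipate is not any single implication but the reliance on the full classification to push (i)$\Rightarrow$(v) and (i)$\Rightarrow$(iv): there does not seem to be an a~priori, classification-free argument that finiteness of the number of classes in $CD$ forces a common invariant Borel, so one genuinely has to walk through the list in Theorem~\ref{thm:mainB}, verifying in each of cases (a)--(k) that a common Borel exists (for (k) using Proposition~\ref{prop:D4.S3}) --- this is where the bookkeeping with the double-coset counts from Propositions~\ref{prop:orb1} and~\ref{prop:orb2} and the various Examples is essential, and where care is needed to treat the outer classes and the exceptional $D_4.\fS_3$ behaviour correctly.
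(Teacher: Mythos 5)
Your overall architecture matches the paper's: (v)$\Rightarrow$(i) via Lemma~\ref{lem:inBorel}, (i)$\Leftrightarrow$(ii) via Corollary~\ref{cor:unique}, (iii)$\Leftrightarrow$(iv) by the standard orbit/double-coset dictionary, (i)$\Rightarrow$(iii),(v) by walking through the classification in Theorems~\ref{thm:mainA} and~\ref{thm:mainB} with the orbit counts from Propositions~\ref{prop:orb1}, \ref{prop:orb2} and the Examples, and (i)$\Leftrightarrow$(vii) via Theorem~\ref{thm:commut}. However, there are two genuine gaps. First, your treatment of (vi) does not work. You assert that $C_{G^\circ}(x_1)C_{G^\circ}(x_2)$ being dense ``holds precisely when the two elements can be placed in a common Borel''; this is false in both directions (note also that (v) demands that \emph{every} pair normalize a Borel, while (vi) only concerns \emph{some} pair), and Lemma~\ref{lem:dense} does not bridge the gap, since a dense $G^\circ$-orbit on $C\times D$ gives $\dim C_{G^\circ}(x_1x_2)\ge\dim\bigl(C_{G^\circ}(x_1)\cap C_{G^\circ}(x_2)\bigr)$, i.e.\ an upper bound, not the lower bound $\dim(x_1x_2)^{G^\circ}=\dim C+\dim D$ that Lemma~\ref{lem:dense} needs. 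So as written you have no implication out of (vi) into the circle. The paper closes this by the easy observation (iii)$\Rightarrow$(vi) (finitely many double cosets force a dense one) together with (vi)$\Rightarrow$(ii), which is \cite[Lemma~5.3]{GMT}; you would need to invoke that lemma (or supply a substitute, e.g.\ that the image of a dense orbit is a single class whose closure, a finite union of classes, contains $CD$).

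Second, your discussion of the $D_4(k).\fS_3$ case has the asymmetry backwards. You claim that for $C,D$ in distinct cosets of order~$2$ ``condition~(i) can hold while (vii) fails''; this would contradict the very statement you are proving, whose ``Moreover'' clause asserts that (i)--(vi) imply (vii) \emph{unconditionally}. The actual phenomenon (Example~\ref{exmp:commD4}) is the reverse: (vii) can hold while (i) fails, which is why only the converse direction requires $G\ne D_4.\fS_3$. As a consequence, your write-up leaves the unconditional implication (i)$\Rightarrow$(vii) unproved when $G=D_4(k).\fS_3$: Theorem~\ref{thm:commut} excludes exactly this configuration, so one must argue separately that when (i) holds in case~3(k), Proposition~\ref{prop:D4.S3} forces $C$ to consist of quasi-central reflections, and then $[C,D]$ is a single $G^\circ$-class by Example~\ref{exmp:commD4} (equivalently Proposition~\ref{prop:commD4}); the paper handles this case by appealing to Proposition~\ref{prop:D4.S3} and Lemma~\ref{lem:D4.S3}.
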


\begin{proof}
Statements (i) and (ii) are equivalent by Corollary~\ref{cor:unique} and
clearly (iii) and (iv) are equivalent. Furthermore, (iii) implies (vi).
By \cite[Lemma 5.3]{GMT} we have that (vi) implies (ii).

It is shown in Lemma~\ref{lem:inBorel} that (v) implies (i).
The equivalence of (i) and (vii) is the statement of Theorem~\ref{thm:commut}.

Thus it remains to show that (i) implies (iii) and (v), which we do by
going through the cases in Theorem~\ref{thm:mainB}. For the cases in~(1),
this is \cite[Cor.~5.14]{GMT}. The unipotent examples in~(2) are those from
Theorem~\ref{thm:mainA}(2)--(5). Here, it is shown in
Examples~\ref{ex:graph-trans}, \ref{exmp:D4}, \ref{exmp:E6}, and in
\cite[Ex.~6.6]{GMT} that $\langle x_1,x_2^g \rangle$ is a unipotent subgroup
and thus~(v) holds. Furthermore, we have~(iii) by Example~\ref{ex:graph-trans}
and Propositions~\ref{prop:orb1} and~\ref{prop:orb2}. 
\par
Now let's turn to the non-unipotent examples in
Theorem~\ref{thm:mainB}(e)--(k). For case~(e), there is just one orbit
of $G$ on $x_1^G \times x_2^G$ by \cite[Ex.~7.2]{GMT}, and clearly we can
find a pair in the normalizer of some Borel subgroup, whence (iii) and (v)
hold. In case~(f), we have just two orbits by Example~\ref{ex:graph-trans}.
Since clearly the graph automorphism $\sg$ cannot centralize all transvections
in a $\sg$-stable Borel subgroup of $G^\circ$, we get representatives of both
orbits in the normalizer.

In case~(g), by Example~\ref{exmp:GO} there's again just a single orbit.
In case~(h), by Example~\ref{exmp:GOtrans} both elements always lie in the
normalizer of some parabolic subgroup, and $x_1$ in its unipotent radical,
so we get~(v), while (iii) is in Proposition~\ref{prop:orb1}. Finally,
in cases~(i) and~(j), by Example~\ref{exmp:D4andE6} there is a dense pair,
which lies inside a Borel subgroup, and (iii) was shown in
Proposition~\ref{prop:orb2}. If $G=D_4.\fS_3$, we apply
Proposition~\ref{prop:D4.S3} and Lemma~\ref{lem:D4.S3}.
\end{proof}

The same result holds for $G$-classes as well (but we can include (vii) without
any restrictions as all the cosets of outer involutions are conjugate).  

Note that all in cases we see from the above arguments that $G$ either has
infinitely many orbits or at most $3$ orbits on $C_1 \times C_2$.  This was
shown in the connected case in \cite{GMT} aside from two cases where a bound
of $4$ was given. We show that $3$ is an upper bound in these cases  as well
in the next two results.

\begin{prop}   \label{lem:SPcent}
 Let $k$ be an algebraically closed field of characteristic $2$.
 Let $G=\Sp_{2n}(k)=\Sp(V)$. Let $x \in G$ be an involution with $(xv,v)=0$
 for all $v \in V$. Then $C_G(x)$ has at most $3$ orbits on the nonzero
 vectors of $V$.
\end{prop}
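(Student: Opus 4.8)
The plan is to read off the geometry of $x$ from the hypotheses, then to exhibit three $C_G(x)$-stable subsets of $V\setminus\{0\}$ each of which is a single orbit. Since $\Char k=2$ and $x^2=1$, writing $x=1+N$ gives $N^2=0$, so $W:=[V,x]=N(V)$ lies in $\ker N=C_V(x)$; as $x$ preserves the alternating form, $C_V(x)=W^{\perp}$, so $W$ is totally singular. The form $B$ on $W$ given by $B(Nu_1,Nu_2):=(Nu_1,u_2)$ is nondegenerate, and the hypothesis $(xv,v)=0$ for all $v\in V$ says precisely that $B$ is alternating, hence a symplectic form; in particular $\ell:=\dim W$ is even (and $\le n$). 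I would then fix a decomposition $V=(X\oplus Y)\perp Z$ with $Y=W$, with $X$ a totally singular complement of $Y$ in $X\oplus Y=Z^{\perp}$ (so $X$ is dual to $Y$), and with $Z$ nondegenerate; here $x$ is trivial on $Z$ and $N$ restricts to an isomorphism $X\xrightarrow{\sim}Y$.

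Since $C_G(x)$ stabilises $W$ and $W^{\perp}$, the sets $W\setminus\{0\}$, $W^{\perp}\setminus W$ and $V\setminus W^{\perp}$ are $C_G(x)$-stable; when $x\ne1$ the first and third are non-empty and the middle one is non-empty exactly when $\ell<n$, so it suffices to show each is a single orbit (and for $x=1$ everything is one $G$-orbit). For $W\setminus\{0\}$: any $h\in\Sp(W,B)$ extends to an element $g\in C_G(x)$ by $g|_Y=h$, $g|_X=(N|_X)^{-1}h\,(N|_X)$ and $g|_Z=\mathrm{id}$; that $g$ commutes with $N$ is immediate, and $g\in\Sp(V)$ follows from the $\Sp(W,B)$-invariance of $B$ and the identity $B(Nu,Nu')=(Nu,u')=(u,Nu')$. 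As $\Sp(W,B)$ is transitive on $W\setminus\{0\}$, so is $C_G(x)$.

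For $V\setminus W^{\perp}$: the same elements $g$ show that $C_G(x)$ induces the full symplectic group on $V/W^{\perp}\cong X$, so after conjugating I may assume $v=e_1+w+z$ with $e_1$ a fixed nonzero vector of $X$, $w\in W$ and $z\in Z$. An explicit unipotent element $1+\eta\in C_G(x)$ of small support with $\eta(e_1)=z$ replaces $v$ by $e_1+w$, and then the symplectic transvection $v'\mapsto v'+(v',w)w$ --- which lies in $C_G(x)$ precisely because $w\in W\subseteq[V,x]$ is singular --- sends $e_1+w$ to $e_1$. For $W^{\perp}\setminus W$: here $\Sp(Z)\le C_G(x)$ is transitive on $Z\setminus\{0\}$, so $v$ reduces to $z_0+w$ with $z_0\in Z$ fixed and $w\in W$, and an explicit unipotent $1+\eta\in C_G(x)$ with $\eta(z_0)=w$ then removes $w$.

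The only real content --- and the step I expect to be the main obstacle --- is the explicit construction of the auxiliary elements of $C_G(x)$: checking that the maps $1+\eta$ of small support are symplectic and commute with $N$, and that the extensions of $h\in\Sp(W,B)$ lie in $\Sp(V)$. Each reduces to a short computation in the chosen basis, and in every case what makes it go through is that $W$ is totally singular with $B$ the induced symplectic form. Alternatively, one could reduce the whole statement to $\Sp_2(k)$ with $x=1$ and to $\Sp_4(k)$ with $x$ an involution such that $[V,x]$ is a totally singular $2$-space and $(xv,v)=0$ for all $v$, where the orbit count is immediate --- for instance by counting over finite fields, in the spirit of the proof of Proposition~\ref{prop:orb1}.
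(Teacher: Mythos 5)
Your reduction to the three $C_G(x)$-stable sets $W\setminus\{0\}$, $W^{\perp}\setminus W$ and $V\setminus W^{\perp}$ is exactly the partition underlying the paper's proof (there one writes $C_G(x)=QJ$ with $Q$ the unipotent radical of the stabilizer of $W$ and $J\cong \Sp(W)\times\Sp(W^{\perp}/W)$, and checks $Qv=v+W$ on $W^{\perp}\setminus W$ and $Qv=v+W^{\perp}$ off $W^{\perp}$), and your treatment of $W\setminus\{0\}$ and of $W^{\perp}\setminus W$ is sound. But the final step of your argument for $V\setminus W^{\perp}$ fails as written. A minor point first: a symplectic $1+\eta$ centralizing $x$ with $\eta(e_1)=z\ne 0$ cannot kill $Z$ (the symplectic condition forces a nonzero adjoint component $Z\to W$), so applying it to $e_1+w+z$ produces $e_1+w'$ with $w'\in W$ possibly different from $w$; this is harmless, but it means the last step must handle an arbitrary element of $e_1+W$. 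And there the transvection $T\colon v'\mapsto v'+(v',w)w$ does not do what you claim: $T(e_1+w)=e_1+w+(e_1,w)w$, which equals $e_1$ only when $(e_1,w)=1$. Worse, if $(e_1,w)=0$ --- which happens for every nonzero $w$ in the hyperplane $e_1^{\perp}\cap W$, and such $w$ exist since $\dim W\ge 2$ --- then every transvection with direction $w$ fixes $e_1+w$, so no member of that family helps. (The membership claim is fine: any transvection with direction in $W$ does centralize $x$.)

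The gap is genuine but easily repaired, and the repair is in effect what the paper's use of $Q$ accomplishes. Consider all $1+M$ with $M(W^{\perp})=0$ and $M(V)\subseteq W$; such a map commutes with $x$ because $MN=NM=0$, and it is symplectic if and only if the bilinear form $\beta(u,u'):=(Mu,u')$ on $V/W^{\perp}$ is symmetric (your transvections with direction in $W$ are exactly the rank-one such elements). These elements send $e_1+w$ to $e_1+w+Me_1$, and as $\beta$ ranges over all symmetric forms, $Me_1$ ranges over all of $W$: if $(e_1,w)\ne 0$ take $Mu=(e_1,w)^{-1}(u,w)\,w$, while if $(e_1,w)=0$ take $Mu=(u,w)\,a+(u,a)\,w$ with $a\in W$, $(e_1,a)=1$ (equivalently, compose two transvections with directions $a$ and $w+a$). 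With this replacement your argument closes, and it is then essentially the paper's proof, carried out with explicit elements instead of the factorization $C_G(x)=QJ$.
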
  

\begin{proof}
Note that $W:=[x,V]$ is a totally singular $m$-space for some $m \le n$.  
Let $X=[x,V]^{\perp}$. Let $P$ be the stabilizer of $W$.  So $P=QL$ is a
maximal parabolic subgroup of $G$ with unipotent radical $Q$ and Levi subgroup
$L \cong \GL(W) \times \Sp(X/W)$. Note that $x \in Z(Q)$ and so we see that
$C_G(x) = QJ$ where $J \cong \Sp(W) \times \Sp(X/W)$.
The result now follows easily by noting that $J$ acts transitively on the
nonzero elements of $W$, $X/W$ and $V/X$ and that if $v \in X \setminus{W}$,
then $Qv = v +W$ and if $v \in V \setminus{X}$, then $Qv = v + X$
(if $W \ne X$, the three orbits are $W \setminus\{0\}$, $X \setminus{W}$ and
$V \setminus{X}$ while if $W=X$, there are two orbits).
\end{proof} 

\begin{prop}   \label{lem:SOcent}
 Let $k$ be an algebraically closed field of characteristic $p \ne 2$.
 Let $G=\SO_n(k)=\SO(V)$. Let $x \in G$ be a unipotent element
 with all Jordan blocks of size at most $2$. 
 Then $C=C_G(x)$ has at most $3$ orbits on nondegenerate $1$-spaces of $V$.
\end{prop}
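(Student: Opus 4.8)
The plan is to follow the strategy of Proposition~\ref{prop:orb1}, but to work directly with the centraliser $C=C_G(x)$ and its explicit structure. Set $W:=[x,V]=\operatorname{im}(x-1)$. Since $x$ is unipotent with $(x-1)^2=0$ and preserves the form, $W$ is totally singular and $W^{\perp}=\ker(x-1)=:X$, with $W\subseteq X$. Write $V=V_1\perp V_2$ with $x|_{V_1}=\mathrm{id}$ and $x|_{V_2}$ of Jordan type $(2^m)$ (here $m$ is even, as otherwise no such orthogonal element exists); then $W$ is a maximal totally singular subspace of $V_2$ and $X=V_1\perp W$. Every $g\in C$ fixes $W=[x,V]$, so $C\le P:=\operatorname{Stab}_G(W)$; writing $P=Q\rtimes L$ with $Q=R_u(P)$ and $L\cong\GL(W)\times\SO(V_1)$, one checks that $x\in Z(Q)$, and hence
$C=Q\rtimes(\Sp(W)\times\SO(V_1))$, where $\Sp(W)\le\GL(W)$ preserves the alternating form on $W$ afforded by $x$. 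In addition, $C$ contains every Eichler transformation $E_{w_0,z}$ with $w_0\in W$ and $z\in w_0^{\perp}$: these commute with $x$ because $x$ is an isometry fixing $w_0$ and $z$, and they move $V_1$, resp.\ $W$, into $W$.

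Given a nondegenerate $1$-space $\ell=\langle v\rangle$, fix a totally singular complement $W'$ to $W$ in $V_2$ and write $v=v_1+w+w'$ accordingly. If $v\in X$ (equivalently $w'=0$), then $(v,v)=(v_1,v_1)\ne0$, so $v_1\ne0$; applying $E_{w_0,z}$ with $z\in V_1$, $(v_1,z)\ne0$, and $w_0$ running over $W$ replaces $w$ by $0$, so $\ell$ is $C$-conjugate into $V_1$, and $\SO(V_1)$ is transitive on the nondegenerate $1$-spaces of $V_1$ over $k$. If $v\notin X$ (equivalently $w'\ne0$), I would first use the transitivity of $\Sp(W)$ on $W'\setminus\{0\}$ to fix $w'$, then annihilate $v_1$ using the layer of $Q$ that maps onto $V_1\cong W^{\perp}/W$ (this map is surjective because $w'\ne0$), then reduce $w$ modulo $(w')^{\perp}\cap W$ using the layer $[Q,Q]\cong\wedge^2 W$, and finally absorb the remaining one parameter of freedom by a suitable torus element of $\Sp(W)$ together with the surjectivity of squaring on $k^{\times}$; this exhibits all such $\ell$ as a single $C$-orbit. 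Since $W$ is totally singular it contains no nondegenerate $1$-space, so these two cases exhaust all possibilities, and $C$ has at most two orbits on nondegenerate $1$-spaces of $V$, which in particular proves the stated bound of three.

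The part I expect to be the real work is the bookkeeping in the case $v\notin X$: checking the surjectivity of the two $Q$-layers on $V_1$ and on $W/((w')^{\perp}\cap W)$, and that the final torus element of $\Sp(W)$ does normalise the residual direction over $k$; and, on the structural side, confirming $x\in Z(Q)$ and $C_L(x)=\Sp(W)\times\SO(V_1)$ in the range $\dim V_1\ge2$ where $W$ need not be maximal totally singular in all of $V$, so that $P$ has non-abelian unipotent radical. As in Propositions~\ref{prop:orb1} and~\ref{prop:orb2}, an alternative way to finish the bounded-rank reductions is to count the relevant orbits over a finite field $\FF_q$ and then pass to the algebraic closure.
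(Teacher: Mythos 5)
Your proof is correct, but it takes a genuinely different route from the paper's. The paper first treats the extreme case where all Jordan blocks have size $2$, and in general uses the $C$-action to move a given nonsingular vector into an $x$-invariant nondegenerate subspace $V_1+W_1$ with $\dim V_1\le 2$, $\dim W_1\le 4$, thereby reducing to $n\le 6$, where the orbit count is checked directly. You instead argue uniformly from the explicit structure of the centralizer, $C=Q\rtimes(\Sp(W)\times\SO(V_1))$, using Eichler transformations and the two unipotent layers of $Q$, and you obtain the sharper conclusion that there are at most \emph{two} orbits (the nondegenerate $1$-spaces inside $X=W^{\perp}$ and those outside each form a single orbit); this is consistent with the two orbits appearing in the proof of Proposition~\ref{prop:orb1}, neither of which splits under the $\SO$-centralizer since all the elements you use lie in $C$. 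The structural facts you flag as "the real work" do hold: every element of $Q$ commutes with $x$ (so $x\in Z(Q)$ and $Q\le C$), $C_L(x)=\Sp(W)\times\SO(V_1)$, the evaluation of the $\operatorname{Hom}(W',V_1)$-layer at $w'\ne0$ is onto $V_1$, the skew layer evaluated at $w'$ is exactly $(w')^{\perp}\cap W$, and the last scaling step and the transitivity of $\SO(V_1)$ on nondegenerate $1$-spaces (for $\dim V_1=2$) both use that $k$ is algebraically closed, as you indicate. Two harmless inaccuracies: $E_{w_0,z}$ commutes with $x$ only when $z\in\ker(x-1)$, so the blanket claim for all $z\in w_0^{\perp}$ is false in general (already for four blocks of size $2$ one can take $z\in W'$ with $(x-1)z\notin\langle w_0\rangle$) --- but in your applications $z\in V_1$, which is fine; and the layer you use to adjust $w$ is the central layer $Z(Q)\cong\wedge^2 W$ of maps $W'\to W$ skew with respect to the pairing rather than $[Q,Q]$ (indeed $Q$ is abelian when $V_1=0$). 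What your approach buys is a uniform, case-free argument and the optimal bound $2$; what the paper's reduction buys is brevity, at the price of leaving the bounded-rank cases to inspection and stating only the bound $3$ that it needs.
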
  

\begin{proof}
First suppose that $n$ is a multiple of $4$ and all Jordan blocks of $x$
have size $2$.  Then $W:=[x,V]=C_V(x)$ is a maximal totally singular subspace
and $C = Q\Sp(W)$ where $Q$ is the unipotent radical of the stabilizer of
$W$.  Then $C$ is transitive on all cosets $v + W$ with $v$ outside $W$ and
$Q$ is transitive on all vectors in $v+W$ of a given norm, whence we see
that $C$ has a single orbit on nondegenerate $1$-spaces (this is also clear
from the proof of Proposition~\ref{prop:orb1}). Thus, we see that $C$
has two orbits on nonzero singular vectors (they are either in $W$ or not).    

In the general case, we can write $V=V_0 \perp W$ where $V_0$ is a
nondegenerate space with $x$ trivial on $V_0$ and all Jordan blocks of $x$
have size $2$ on $W$.  Let $v$ be a nonsingular vector in $V$ and write
$v = v_0 + w$ with $v_0 \in V$ and $w \in W$.  By the argument in the first
paragraph,  we can find $x$-invariant nondegenerate subspaces
$V_1 \subset V_0$ and $W_1 \subset W$ with $\dim V_1 \le 2$ and
$\dim W_1 \le 4$ and $c \in C$ with $cv \in V_1 + W_1$. Thus, we may assume
that $n \le 6$ (and clearly $n \ge 4$).  It is straightforward to check the
result in these cases.
\end{proof}  

This completes the proof of Corollary~\ref{cor:finite}.

\section{Cosets of order 2 in $D_4(k).\fS_3$}   \label{sec:D4.S3}

Here, we discuss the situation left open in conclusion~(k) of
Theorem~\ref{thm:mainB}.

Let $k$ be an algebraically closed field of characteristic $p\ge 0$ and
$G=D_4(k).\fS_3$ (of adjoint type), the extension of a simple algebraic group
of type $D_4$ by the full group of graph automorphisms $\fS_3$. 
Let $x, y \in G$ be outer involutions in two
different cosets of $G^\circ$.  Moreover, we assume that $x$ and $y$
correspond to reflections in some projective $8$-dimensional representation of
$\langle G^\circ,x\rangle$, resp.~$\langle G^\circ,y\rangle$. Note
that $G^{\circ} = C_{G^\circ}(x)C_{G^\circ}(y)$ whence $G^\circ$ is transitive
on such pairs. Set $z=xy$.  Thus $z$ is a graph automorphism of $G^\circ$ of
order $3$ with centralizer $C_{G^\circ}(z) \cong G_2(k)$.

We start with some examples. In all of these $C$ denotes the $G^\circ$-class
of $x$ and $D$ will be a class in the coset $G^\circ y$. For convenience we
recall the outer unipotent conjugacy classes of $\GO_8(k)$,
with $k$ algebraically closed of characteristic~2 (see \cite[p.236]{Spa} and
\cite[Tab.~10]{MDec}, and the unipotent classes of $G_2(k)$ (see
\cite[Tab.~B, Tab.~1]{Law}).

\begin{table}[htbp]
 \caption{Outer unipotent classes of $\GO_8(k)$, $\Char(k)=2$}
  \label{tab:outD4}
\[\begin{array}{|l|cccc|}
\hline
     x& C(x)/R_u(C(x))& \dim R_u(C(x))& \dim C(x)& A(u)\cr
\hline
    2.1^6&   C_3& 0& 21& 1\cr
  2^3.1^2& A_1^2& 7& 13& 1\cr
    4.1^4& A_1^2& 5& 11& Z_2\cr
    3^2.2&   A_1& 6&  9& 1\cr
  4.2_0^2&   A_1& 6&  9& 1\cr
    4.2^2&     1& 7&  7& 1\cr
    6.1^2&   T_1& 4&  5& Z_2\cr
        8&     1& 3&  3& 1\cr
\hline
\end{array}\]
\end{table}

\begin{table}[htbp]
 \caption{Unipotent classes of $G_2(k)$, $\Char(k)\ne3$}
  \label{tab:G2}
\[\begin{array}{|l|cccc|rr|}
\hline
     x& C(x)/R_u(C(x))& \dim R_u(C(x))& \dim C(x)& A(u)& \span{\text{in }D_4(k)}\quad\cr
     & & & & & (p>3)& (p=2)\cr
\hline
          1&   G_2& 0& 14&     1&      1^8&        1^8\cr
        A_1&   A_1& 5&  8&     1&  2^2.1^4&  2_0^2.1^4\cr
 \tilde A_1&   A_1& 3&  6&     1&  3.2^2.1&        2^4\cr
   G_2(a_1)&     1& 4&  4& \fS_3&  3^2.1^2&    3^2.1^2\cr
        G_2&     1& 2&  2& Z_{(2,p)}&  7.1&        6.2\cr
\hline
\end{array}\]
\end{table}

\begin{table}[htbp]
 \caption{Unipotent classes of $G_2(k)$, $\Char(k)=3$}
  \label{tab:G2p=3}
\[\begin{array}{|l|cccc|r|}
\hline
     x& C(x)/R_u(C(x))& \dim R_u(C(x))& \dim C(x)& A(u)& \text{in }D_4(k)\cr
\hline
          1&   G_2& 0& 14&     1& 1^8\cr
        A_1&   A_1& 5&  8&     1& 2^2.1^4\cr
 \tilde A_1&   A_1& 5&  8&     1& 3.2^2.1\cr
 \tilde A_1^{(3)}& 1& 6& 6&    1& 3.2^2.1\cr
   G_2(a_1)&     1& 4&  4&   Z_2& 3^2.1^2\cr
        G_2&     1& 2&  2&   Z_3& 7.1\cr
\hline
\end{array}\]
\end{table}


\begin{exmp}   \label{exmp:D4.S3a}
Let $D$ consist of elements conjugate to $ys = sy$ where $s\in C_{G^\circ}(y)$
is a semisimple element with at most $2$ non-trivial eigenvalues. First
assume that $s$ has a non-trivial eigenvalue $a$ of order prime to~$6$.
Note that $\dim C = 7$ and that $\dim D = 17$. Also note that we can arrange
that $xy$ is conjugate to $zt$ where $t\in C_{G^\circ}(z)$ with $t$
semisimple and of order prime to $6$.
Thus $\dim C_{G^\circ}(zt) = \dim C_{G_2(k)}(t^3) \le 4$ and it follows
that $\dim (xy)^{G^\circ} \ge 24$, whence we have equality.
Thus, $CD=(xy)^{G^\circ}$ (since the right hand side is closed).
Indeed the same argument shows that for any $(x',y')\in C \times D$,
$\dim (C_{G^\circ}(x') \cap C_{G^\circ}(y'))\le \dim C_{G^{\circ}}(xy) = 4$,
so $G^{\circ} = C_{G^\circ}(x)C_{G^\circ}(y)$.
If $a \in k^\times$ is arbitrary, the centralizer will only get bigger
whence we still have the same factorization.
\end{exmp}

\begin{exmp}   \label{exmp:D4.S3b}
Let $p\ne2$ and $D$ consist of elements conjugate to $yu=uy$ where
$u\in C_{G^\circ}(y)$ is unipotent with Jordan block lengths $2^2.1^4$ or
$3.1^5$.\par
First consider the case that $u$ is a long root element. Note that every
$G^\circ$-orbit in $C \times D$ contains a representative $(x,yu)$ where $u$
is a long root element commuting with $y$ (since $G^\circ$ is transitive on
$x^{G^\circ} \times y^{G^\circ}$). By Proposition~\ref{prop:orb2},
$C_{G^\circ}(z)$ has five orbits on long root elements in $G$.
It is easy to see that only $2$ of these orbits are contained in $C_G(y)$.
Thus, $G^{\circ}$ has exactly $2$ orbits on $C \times D$ (depending upon
whether $u$ centralizes $z$ or not).

There is a subgroup $A_1(k)^4$ of $G^\circ$ normalized by $x,y$. We may assume
that $y$ permutes the last two copies and $x$ permutes the middle two copies
(so both centralize the first copy).  If we take $u = (a_1, a_2, 1, 1)$
where $a_i$ are nontrivial unipotent elements of $A_1(k)$, then
$u$ has a single Jordan block of size $3$ but $zu$ is conjugate to
$zv=vz$ where $v$ has two Jordan blocks of size $3$ (and two trivial Jordan
blocks). It follows that $v$ lies in the conjugacy class $G_2(a_1)$ in
$C_{G^\circ}(z)$. Thus, $\dim (zv)^{G^\circ} = 24$ and so $G^\circ$ has a
dense orbit on $C \times D$ (consisting of those pairs whose product is in
the class of $zv$).
\end{exmp}

\begin{exmp}   \label{exmp:D4.S3c}
Let $p=2$ and $D$ consist of $2$-elements with Jordan block lengths
$2^3.1^2$, $4.1^4$ or $3^2.2$ in the natural 8-dimensional representation
of $\GO_8(k)$. An explicit calculation in the permutation representation of
$\SO_8^+(2).\fS_3$ of degree~3510 (as maximal subgroup of $Fi_{22}$) shows that
the respective products contain elements of the form $zu$, with $z=xy$ a
graph automorphism of order~3 and $u\in C_{G^\circ}(z)\cong G_2(k)$ unipotent
in the $G_2(k)$-class $\tilde A_1,G_2(a_1),G_2$ respectively (see
Table~\ref{tab:G2}). Thus the corresponding classes $E=[zu]$ have dimensions
$22, 24, 26$ respectively. As $\dim C+\dim D$ equals $7+15,7+17,7+19$ in the
three cases, we conclude by Lemma~\ref{lem:dense} that $E$ is dense in $CD$
and that $CD$ consists of finitely many $G^\circ$ classes.
\end{exmp}

We now show that the above are the only examples:

\begin{prop}   \label{prop:D4.S3}
 Let $C,D$ be $G^\circ$-orbits in $xG^{\circ}$ and $yG^{\circ}$. Then $CD$
 is a finite union of $G^\circ$-orbits if and only if (after reordering if
 necessary) $C$ consists of involutions conjugate (in $G$) to $x$ and one of
 \begin{enumerate}
  \item[\rm(1)] $D$ consists of elements conjugate to $yt = ty $ where
   $t \in G^\circ$ is a semisimple element with at most $2$ non-trivial
   eigenvalues;
  \item[\rm(2)] $p\ne2$, $D$ consists of elements conjugate to $yu=uy$ where
   $u$ is unipotent with Jordan block lengths $2^2.1^4$ or $3.1^5$; or
  \item[\rm(3)] $p=2$ and $D$ consists of $2$-elements with Jordan block
   lengths $2^3.1^2$, $4.1^4$ or $3^2.2$ in the natural 8-dimensional
   representation of $\GO_8(k)$.
 \end{enumerate}
\end{prop}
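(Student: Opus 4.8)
The ``if'' direction is already established by Examples~\ref{exmp:D4.S3a}, \ref{exmp:D4.S3b} and~\ref{exmp:D4.S3c}, so the real task is the ``only if'' direction: to show that no other pair $(C,D)$ of $G^\circ$-orbits in $xG^\circ\times yG^\circ$ has $CD$ a finite union of orbits. The overall strategy mirrors the proofs of Theorems~\ref{thm:mainA} and~\ref{thm:mainB}: reduce to closed classes, then argue inside centralizers, and finally dispose of the finitely many small-rank cases by explicit structure-constant computations in a finite group, here $\SO_8^+(2).\fS_3$ or $\SO_8^+(q).\fS_3$. First I would observe that since $C$ and $D$ lie in distinct cosets of outer involutions, $z:=xy$ has order $3$ with $C_{G^\circ}(z)\cong G_2(k)$, and $G^\circ=C_{G^\circ}(x)C_{G^\circ}(y)$, so $G^\circ$ is transitive on the set of such pairs $(x,y)$ with $x\in C$, $y$ an involution conjugate to $y$. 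Thus every $G^\circ$-orbit in $C\times D$ has a representative $(x',y')$ with $x'$ a fixed reflection-type involution and $y'=yw$ where $w\in C_{G^\circ}(y)\cong B_3(k)$ (or, in characteristic $2$, $w$ in the relevant coset structure); and then $x'y'$ is $G^\circ$-conjugate to $zw'$ for some $w'$ in $C_{G^\circ}(z)\cong G_2(k)$ of the appropriate type. The finiteness of $CD$ forces, by irreducibility of $C\times D$, the semisimple part of elements of $CD$ to be constant, equivalently (Corollary~\ref{cor:unique}) a unique quasi-semisimple class in $\overline{CD}$.

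The main reduction is then: by passing to closures (Theorem~\ref{thm:closedclasses}, Lemma~\ref{lem:dense}) I may assume $C$ is the closed quasi-semisimple class in $xG^\circ$ --- i.e.\ $C$ consists of the reflection-type involutions with centralizer $B_3(k)$ --- and that $D$ is replaced by a class in its closure. The burden is to classify which $D$ in $yG^\circ$ survive. I would split according to the Jordan/Chevalley decomposition of a representative $d=d_sd_u$ of $D$, where $d_s$ is the semisimple part (an outer element normalizing a maximal torus of $G^\circ$, so with $d_s^2$ a semisimple element of $G^\circ=\SO_8(k)$) and $d_u$ a commuting unipotent. The key dimension bookkeeping: $\dim C=7$, and $CD$ is a finite union of classes iff $\dim C_{G^\circ}(x')\cap C_{G^\circ}(y')$ is bounded below by $\dim G^\circ-\dim C-\dim D$ on a dense subset, equivalently $CD$ hits a class of dimension $\dim C+\dim D$. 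Using $C_{G^\circ}(x'y')=C_{G^\circ}(zw')$ and the fact that for $zw'$ with $w'$ of type $t$ one has $\dim C_{G^\circ}(zw')=\dim C_{G_2(k)}((w')^{?})$ computed inside $G_2(k)$, I would compare this against $\dim C+\dim D$ in each case. Elements of $G_2(k)$ have centralizer dimension at least $2$, so $\dim (zw')^{G^\circ}\le 24$; this caps $\dim D\le 17$ and already eliminates all the large outer classes of $\GO_8(k)$ from Table~\ref{tab:outD4} (those with $\dim C(x)\le 9$, i.e.\ $\dim$ class $\ge 19$) in characteristic $2$, and similarly restricts the semisimple eigenvalue pattern of $d_s$ in characteristic $\ne 2$ to at most two nontrivial eigenvalues (more nontrivial eigenvalues give $\dim D>17$, hence $\dim(CD)$-component $>24$, impossible), and the unipotent part $d_u$ to Jordan type $2^2.1^4$ or $3.1^5$ (anything larger again overshoots). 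For the remaining borderline configurations I would argue, as in the proofs of the earlier theorems, by reducing to a subsystem subgroup --- e.g.\ the $A_1(k)^4\rtimes\fS_3$ normalized by $x,y$, with $y$ swapping the last two factors and $x$ the middle two --- and invoking Lemma~\ref{lem:wreath}: if $d_s$ is non-reflection semisimple, or $d_u$ has a component nontrivial in two of the $A_1$-factors after conjugation, the product $CD$ meets infinitely many classes of $A_1(k)\wr 3$ and hence of $G^\circ$ (Proposition~\ref{prop:hit}).

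The step I expect to be the main obstacle is the characteristic-$2$ analysis, where the semisimple/unipotent dichotomy collapses: outer elements of $\GO_8(k).3$ of order a power of $2$ can be genuinely mixed, the module-theoretic ``$(xv,v)=0$'' condition replaces ``reflection'', and the fusion of $G_2(k)$-classes of $2$-elements into $\GO_8(k).3$-classes (Table~\ref{tab:G2}, last column) must be tracked carefully to match the Jordan types $2^3.1^2$, $4.1^4$, $3^2.2$ against the $G_2$-classes $\tilde A_1$, $G_2(a_1)$, $G_2$. For these I would rely on the explicit permutation computation in $\SO_8^+(2).\fS_3$ (acting on $3510$ points as a maximal subgroup of $Fi_{22}$) already used in Example~\ref{exmp:D4.S3c}, verifying both that the listed $D$ give the claimed dense classes $[zu]$ and --- crucially for the ``only if'' direction --- that the next-largest outer $2$-classes ($4.2_0^2$, $4.2^2$, $6.1^2$, $8$, and $2^3.1^2$ paired against the wrong orbit of $x$) produce products with non-constant semisimple part, i.e.\ hit more than one quasi-semisimple class, by computing structure constants. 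The passage from $q=2$ to arbitrary algebraically closed fields is routine, using the argument of \cite[Ex.~6.2]{GMT}: the orbit-count/dimension data is constructible and constant on a dense open set of characteristics, and the finitely many small primes are covered by the direct computation together with Table~\ref{tab:G2p=3} for $p=3$.
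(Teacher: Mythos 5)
Your ``if'' direction and the characteristic-$2$ computational ingredient match the paper, but the engine you propose for the ``only if'' direction does not work. You claim that finiteness of $CD$ is equivalent to $CD$ hitting a class of dimension $\dim C+\dim D$ (i.e.\ to a dense $G^\circ$-orbit on $C\times D$). That equivalence is not an a priori fact: in this paper it is only obtained \emph{a posteriori}, in Theorem~\ref{thm:mainlong}, whose proof for $G=D_4(k).\fS_3$ invokes Proposition~\ref{prop:D4.S3} and Lemma~\ref{lem:D4.S3} themselves, so using it here is circular. Worse, the numerical bound you extract from it is wrong: elements of $G_2(k)\cong C_{G^\circ}(z)$ can have $2$-dimensional centralizers (regular unipotent elements), so classes in the order-$3$ coset have dimension up to $26$, not $24$; your cap ``$\dim D\le 17$'' would then wrongly exclude the class $D$ of Jordan type $3^2.2$, which has dimension $19$ and \emph{is} in conclusion~(3) (Example~\ref{exmp:D4.S3c} exhibits the dense product class of dimension $26=7+19$). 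So the dimension bookkeeping, as stated, both rests on an unproved equivalence and contradicts one of the listed examples.

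The second gap is the reduction ``by passing to closures I may assume $C$ is the closed quasi-semisimple class in $xG^\circ$, i.e.\ the reflection class''. The coset $xG^\circ$ contains many closed (quasi-semisimple) classes --- by Corollary~\ref{cor:unique} the closure of $C$ contains the one with the \emph{same} semisimple part as $C$, not automatically the reflection class --- and proving that only the reflection class can occur is precisely the hard part. The paper does this by taking $(x',y')$ normalizing the parabolic $P=QL$ with $L=TA_1(k)^3$ (normalizer of a long root subgroup, with $x$ swapping two $A_1$-factors and $y$ two others), checking directly that $(C\cap L)(D\cap L)$ meets infinitely many $L.\fS_3$-classes outside the allowed cases and applying Proposition~\ref{prop:hit}, plus two special arguments: for $p\ne2$, if $C,D$ are non-reflection involution classes one writes $xs\cdot yt=z(st)$ with $s,t$ involutions of $G_2(k)$, whose products meet infinitely many $G_2(k)$-classes; and for $p=2$ a direct GAP check in $\Aut(D_4(2))$ (which you do have, but only for pairs of $2$-elements --- it cannot substitute for the quasi-semisimple and mixed cases). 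Your proposal never eliminates the other quasi-semisimple possibilities for $C$ and for the semisimple part of $D$ by a sound mechanism, so the ``only if'' direction remains essentially unproved.
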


\begin{proof}
We have already seen in Examples~\ref{exmp:D4.S3a}--\ref{exmp:D4.S3c} that
all cases~(1)--(3) really occur. So we need to show that these are the only
possibilities with $CD$ containing only finitely many $G^\circ$-orbits.
So assume that $CD$ consists of finitely many $G^\circ$-orbits or equivalently
that all quasi-semisimple elements in the closure of $CD$ are conjugate.

First consider the case that $p \ne 2$ and  $C$ and $D$ are both involutions but
neither are reflections. Note that this prescribes $C$ and $D$ uniquely.
Thus, for any pair of involutions $s, t \in C_{G^\circ}(z)$,
$(xs,yt) \in C \times D$ and $xsyt=z(st)$.  As $s,t$ run over all such
involutions, $st$ intersects infinitely many $G_2(k)$ classes
(since we can take $s, t$ to be involutions in $G_2(k)$), whence
$CD$ consists of infinitely many classes.

If $p=2$ and $C$ and $D$ both consist of $2$-elements, then every unipotent
class is represented in $\Aut(D_4(2))$ and one checks directly
using GAP that the answer is as stated in (3) of the assertion.

So next assume that $D$ does not consist of involutions and that
$C$ and $D$ are both quasi-semisimple classes.
Then we choose $(x',y') \in C \times D$ which normalize a parabolic subgroup
$P=QL$ with $P$ the normalizer of a long root subgroup, $L$ is a Levi subgroup
and $Q$ the unipotent radical of $P$.
Note that  $L = TA_1(k)^3$ where $T$ is a central $1$-dimensional
torus of $L$.  We can assume that $x$ permutes the first
two copies of $A_1(k)$ and $y$ the last two.

Since $C$ and $D$ consist of quasi-semisimple elements, we can choose
$x',y' $ actually normalizing $L$.  It is straightforward to see that
(aside from the cases allowed in the conclusion) $(C \cap L)(D \cap L)$
intersects infinitely many classes of $L.\fS_3$ whence $CD$ contains
infinitely many $G^\circ$-orbits by Proposition~\ref{prop:hit}.

Thus, we may assume that all quasi-semisimple elements in the closure of $x'$
are conjugate to $x$ and that the quasi-semisimple elements in the closure of
$y'$ are as given in (1) or (2).  Moreover, at least one of $x', y'$ is not
quasi-semisimple. 

\medskip
Case 1. $x'$ is quasi-semisimple and $y$ does not lie in the closure of $y'$.

\noindent
By passing to closures, we may assume that $y'=y_1y = yy_1$ where $y_1 = ut$
with $t$ semisimple having precisely two nontrivial eigenvalues and $u$ is a
long root element.  Note that such a class has a representative in $L$ and
one easily checks that there are infinitely many classes represented there.

\medskip
Case 2. $x'$ is quasi-semisimple and $y$ lies in the closure of $y'$.

\noindent
In this case, we can assume that $p \ne 2$ (because of the remarks above about
pairs of $2$-elements). By passing to closures (and noting that we may assume
the unipotent part of $y'$ is not a long root element nor has just one single
Jordan block of size $3$), we may assume that $y'=yu$ where $u$ has a single
Jordan  block of size $3$ and two Jordan blocks of size $2$.
Again, we can see such an element in $L$ and obtain a contradiction. 

\medskip
Case 3.  Neither $x'$ nor $y'$ are quasi-semisimple.

\noindent
Passing to closures, we may assume that the unipotent parts of $x'$ and
$y'$ are long root elements and again we can see such elements in $L$
and derive a contradiction.

By symmetry, we have covered all cases and proved the result. 
\end{proof} 

\begin{lem} \label{lem:D4.S3}
 In the situation of Proposition~\ref{prop:D4.S3}, $G^\circ$ acts transitively
 on $C \times D$ in case~(1); it has $2$ orbits in both situations of case (2);
 and there are 2, 3,  respectively~2 orbits in case (3).  Moreover,
 every $(c,d) \in C \times D$ normalizes a Borel subgroup of $G^\circ$. 

\end{lem}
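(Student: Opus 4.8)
The strategy is to go through the three cases of Proposition~\ref{prop:D4.S3} and in each one pin down the $G^\circ$-orbits on $C\times D$ by a dimension count, using the fact (established in Examples~\ref{exmp:D4.S3a}--\ref{exmp:D4.S3c}) that $CD$ is dense in a single class $E=(xy)^{G^\circ}$ together with Lemma~\ref{lem:dense}. Recall that $C$ always consists of reflections, so $\dim C=7$, and $G^\circ$ is transitive on $x^{G^\circ}\times y^{G^\circ}$ because $G^\circ=C_{G^\circ}(x)C_{G^\circ}(y)$; hence every orbit on $C\times D$ contains a representative of the form $(x,yw)$ with $w\in C_{G^\circ}(y)$, and two such pairs $(x,yw_1)$, $(x,yw_2)$ are $G^\circ$-conjugate if and only if $w_1,w_2$ are conjugate under $C_{G^\circ}(x)\cap C_{G^\circ}(y)\cong B_3(k)$ acting inside $C_{G^\circ}(y)\cong B_3(k)$. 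So the number of orbits on $C\times D$ equals the number of $B_3(k)$-classes of elements $w$ of the relevant type, but it is cleaner to count via $z=xy$: the orbit of $(x,yw)$ is determined by the $C_{G^\circ}(z)\cong G_2(k)$-class of the corresponding element obtained in the product, since $\dim(C_{G^\circ}(x)\cap C_{G^\circ}(yw))$ is governed by the centralizer of that element in $G_2(k)$.

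First I would treat case~(1). Here $d=yt$ with $t$ semisimple having at most two nontrivial eigenvalues. Example~\ref{exmp:D4.S3a} shows $\dim D=17$ and that $\dim(C_{G^\circ}(x)\cap C_{G^\circ}(d))\le\dim C_{G^\circ}(xy)=4$ for \emph{every} $(x,d)\in C\times D$; since $\dim C+\dim D=7+17=24=\dim G^\circ-\dim C_{G^\circ}(xy)$, and $E=(xy)^{G^\circ}=CD$ is a single class of dimension $24$, every pair in $C\times D$ has stabilizer of dimension exactly $\dim G^\circ-24=\dim C_{G^\circ}(xy)$, and lies in the (unique) dense orbit; hence $G^\circ$ is transitive on $C\times D$. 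For case~(2), with $p\ne2$ and $u$ of type $2^2.1^4$ (a long root element) or $3.1^5$: when $u$ is a long root element, Proposition~\ref{prop:orb2}(a) gives that $C_{G^\circ}(z)\cong G_2(k)$ has five orbits on long root elements of $G^\circ$, and one checks (as sketched in Example~\ref{exmp:D4.S3b}) that exactly two of these lie in $C_G(y)$ — the one where $u$ centralizes $z$ (giving $u$ long in $G_2(k)$) and one where it does not — so there are exactly $2$ orbits; when $u$ has type $3.1^5$, the same two possibilities arise (whether or not the appropriate conjugate of $u$ commutes with $z$), giving $2$ orbits again, with the generic one having product in the $G_2(a_1)$-class as computed in Example~\ref{exmp:D4.S3b}. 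For case~(3), with $p=2$ and $D$ of Jordan type $2^3.1^2$, $4.1^4$ or $3^2.2$: since every unipotent class of $D_4$ is represented over $\FF_2$, one reads off the orbit counts from a direct computation in $\SO_8^+(2).\fS_3$ (the permutation representation of degree $3510$ used in Example~\ref{exmp:D4.S3c}), obtaining $2$, $3$, $2$ orbits respectively.

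For the last assertion — that every $(c,d)\in C\times D$ normalizes a Borel subgroup of $G^\circ$ — the cleanest route is to exhibit, in each orbit, a representative pair that visibly normalizes a common Borel subgroup, and then invoke the equivalence of conditions in Theorem~\ref{thm:mainlong} (specifically, (i)$\Rightarrow$(v)), which applies once we know $CD$ is a finite union of classes; but since Theorem~\ref{thm:mainlong}'s proof for $D_4.\fS_3$ in fact \emph{cites} this lemma, I would instead argue directly. Both $x$ and $y$ are reflections in the $8$-dimensional representation, so $\langle G^\circ,x\rangle$ and $\langle G^\circ,y\rangle$ are (inside $\GO_8(k)$) of the form $\langle\SO_8(k),\text{reflection}\rangle$, and $x$, $y$ each normalize any $x$-, resp. $y$-stable Borel of $G^\circ$; the element $w$ (semisimple with $\le 2$ nontrivial eigenvalues, or unipotent of small Jordan type, or a $2$-element of the listed types) can in each case be conjugated by $C_{G^\circ}(y)$ into a $y$-stable Borel subgroup that is also $x$-stable — because in the subgroup $A_1(k)^4\le G^\circ$ normalized by both $x$ and $y$ (with $x$ swapping copies $2,3$ and $y$ swapping copies $3,4$, both fixing copy $1$), one can place $w$ so that it lies in the product of the relevant upper-triangular Borels of the $A_1$-factors, which is normalized by both $x$ and $y$. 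The main obstacle is verifying that the small unipotent/semisimple elements in cases (2) and (3) genuinely have conjugates simultaneously compatible with an $x$- and $y$-stable Borel subgroup — this is where the $A_1(k)^4$ picture from Example~\ref{exmp:D4.S3b} does the real work, and for the characteristic~$2$ Jordan types $4.1^4$ and $3^2.2$ one may need to supplement it with the explicit GAP computation in $\SO_8^+(2).\fS_3$ to see that each orbit has a representative inside the normalizer of a Borel subgroup. Once such representatives are produced, normalizing a Borel is a conjugation-invariant property of the pair, so it holds for all of $C\times D$.
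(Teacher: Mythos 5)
Your case (1) and the long-root-element half of case (2) do follow the paper's route (Examples \ref{exmp:D4.S3a}, \ref{exmp:D4.S3b} plus Proposition \ref{prop:orb2}), but the remaining counts contain genuine gaps. For the Jordan type $3.1^5$ in case (2) you assert that "the same two possibilities arise (whether or not the appropriate conjugate of $u$ commutes with $z$)". This dichotomy cannot be the orbit invariant: no element of Jordan type $3.1^5$ on the $8$-dimensional module lies in $C_{G^\circ}(z)\cong G_2(k)$ at all (compare Table~\ref{tab:G2}: the possible types are $2^2.1^4$, $3.2^2.1$, $3^2.1^2$, $7.1$), so "commutes with $z$" is vacuous here, and even if a dichotomy of classes of products were available, it would not by itself bound the number of orbits (each possibility could split, or be empty — recall that for type $4.1^4$ there are three orbits although only certain product classes occur). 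The paper instead counts $G_2(k)$-orbits on the relevant unipotent class of $\SO_7(k)$ directly: such a $u$ has a unique invariant singular line, $G_2(k)$ is transitive on singular lines, and one then counts orbits of the $G_2$-parabolic on the $5$-dimensional abelian unipotent radical, getting exactly two. Nothing of this kind appears in your argument. In case (3) you claim the orbit numbers can be "read off" from a computation in $\SO_8^+(2).\fS_3$. They cannot: a $G^\circ$-orbit splits over $\FF_{2^f}$ into several finite orbits governed by the component group of the stabilizer (Lang's theorem), so finite orbit counts alone do not determine the algebraic count. This is exactly the delicate part of the paper's proof: for type $3^2.2$ it identifies $[J:J^\circ]=2$ for the dense orbit and $\fS_3$ for the other, and matches $|C(q)||D(q)|$ against $|G^\circ(q)|/q^2+|G^\circ(q)|/q^4$; for type $4.1^4$ it argues structurally (at most $3$ orbits from the five $G_2(k)$-orbits on long root elements, at least two interchanged by $y$, and a third since neither of those is dense). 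Also, your framing lemma is wrong: $C_{G^\circ}(x)\cap C_{G^\circ}(y)$ is not $B_3(k)$; it is contained in $C_{G^\circ}(z)$ and its connected component is $G_2(k)$ (this is why the paper compares with $G_2(k)$-orbits), and the "if and only if" via a decomposition $d=yw$ is only valid when $yw$ is the Jordan decomposition — in the characteristic~$2$ case $4.1^4$ it is not, and the paper only gets an upper bound from it.

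For the final assertion, the paper's mechanism is different from yours and does the real work: it exhibits a pair in the \emph{dense} orbit normalizing a Borel subgroup and then uses that normalizing a Borel subgroup is a \emph{closed} condition on $C\times D$ (because $G^\circ/B$ is projective), so the property propagates from the dense orbit to all pairs. Your plan instead requires a representative in \emph{every} orbit inside the normalizer of a Borel subgroup, which is strictly harder, and the step you yourself flag as the "main obstacle" (placing the small unipotent elements of cases (2) and (3) compatibly inside an $x$- and $y$-stable Borel, for each orbit) is left unverified; the $A_1(k)^4$ picture alone does not obviously produce representatives of the non-dense orbits, and a Borel of $A_1(k)^4$ is in any case not a Borel of $G^\circ$ without further argument. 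As written, the proposal therefore does not establish the orbit counts for the $3.1^5$ and characteristic~$2$ cases, nor the Borel statement.
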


\begin{proof}
The case~(1) was already argued in Example~\ref{exmp:D4.S3a}, and similarly
for the first pair of classes in case~(2), in Example~\ref{exmp:D4.S3b}. If
$u$ has Jordan blocks of lengths $3.1^5$ in case~(2), we need to count
orbits of $G_2(k)$ on the class of $u$ in $\SO_7(k)$.

Note that if $u\in \SO_7(k)$ is such a unipotent element, it has a unique
1-dimensional invariant (singular) subspace (namely the image of $(u-1)^2$)
and then $u$ lies in the unipotent radical of the stabilizer $P_1$ of that
subspace (corresponding to nonsingular vectors in the unipotent radical).

Note that $G_2(k)$ acts transitively on singular 1-spaces (this is well known,
but note that in the 7-dimensional representation, the stabilizer of a highest
weight space is a maximal parabolic subgroup $P$ of codimension~5, thus
$G_2/P$ is a 5-dimensional projective variety contained in $\SO_7(k)/P_1$
which is also projective of dimension~5, so they are equal). So we can just
count orbits of $P$ on the unipotent radical $Q$ of $P_1$.
Now $Q$ is 5-dimensional and abelian and $P_1/Q$ acts on this as $T\SO_5(k)$,
and $P/(Q \cap P) = TA_1U_3$ acts on this so that $Q = 2/1/2$ as a module for
$P$. Because of the torus it is enough to count orbits on nondegenerate
1-spaces, whence it is easy to see that $P'$ has two such orbits acting on
$Q$ (namely those spaces in the orthogonal complement of the 2 and those
outside).
\par

Finally, in case~(3), first assume that $D$ contains unipotent elements
with Jordan type $2^3.1^2$. As we saw in Example~\ref{exmp:D4.S3c}, one orbit
consists of pairs $(c,d)$ such that $cd$ lies in class $\tilde A_1$ of
a subgroup $G_2(k)$, with connected centralizer of dimension~6. A second
orbit is given by pairs $(c,d)$ with product in class $A_1$ in $G_2(k)$, and
connected centralizer of dimension~8. Adding up the lengths of these two
orbits over $\FF_{2^f}$ shows that there no other orbits.

Next, assume that $D$ contains unipotent elements with Jordan type $4.1^4$.
Any $d\in D$ can be written as $yw$ where $y$ is
a reflection and $w$ is a long root element in $G^\circ$ (not centralizing $y$
for sure). This is not unique, but given any $(c, d)\in C \times D$,
we can conjugate and assume $c = x$ and $d = yw$ as above.
Now $G_2(k)$ centralizes $x$ and $y$ and so the number of orbits
of $G$ on $C \times D$ is at most the number of orbits of $G_2(k)$ on the set
of long root elements $w$ such that $yw$ has Jordan type $4.1^4$.

Since $G_2(k)$ has 5 orbits on long root elements and at least two of them do
not have the property that $yw$ has order $4$ (namely the ones in $G_2(k)$ and
also the ones in $\Sp_6(k)$), there are at most 3 orbits. In fact, we claim
that there are $3$ orbits. Two of the orbits are interchanged by $y$ and we
can see such a $w$ in $A_1(k) \times A_1(k)$ where $y$ is interchanging the 
$2$ factors and $w$ is trivial in one of the factors. Clearly $yw$ has order
$4$ and has a $5$-dimensional fixed spaces, so $yw$ has the correct Jordan
type. Since $y$ interchanges those two orbits, we see that there are at least
$2$ orbits.  However, neither of these is the dense orbit (e.g., they have the
same dimension) and so there must be a third orbit (alternatively, one
can compute over $\FF_2$). 
\par

Finally consider the case that $D$ consists of unipotent elements with
Jordan type $3^2.2$. 

Computing over $\FF_2$, we see that $CD$ contains the conjugacy classes
of elements $zu=uz$ where $z$ is a graph automorphism of order $3$
with centralizer $G_2(k)$ and $u$ is either a regular unipotent element of
$G_2(k)$ or is in the conjugacy class $G_2(a_1)$.  

First consider a pair $(c,d) \in C \times D$ with $cd=zu=uz$ where $u$ is a
regular unipotent element in $G_2(k)$.  It follows that $J$, the
centralizer of $\langle c, d \rangle$ is contained in the centralizer of $zu$
which has connected component a 2-dimensional unipotent group and has two
components.  Since $\dim C + \dim D = 26$, it follows that $\dim J \ge 2$,
whence $J^\circ$ is a $2$-dimensional unipotent group.  Moreover, this must
correspond to the dense orbit of $G^\circ$ and so it is unique. 
Computing such triples over $\FF_2$, we see that there are two orbits 
each of size $|G^\circ(2)|/4/2$.  It follows by Lang's theorem 
that $[J:J^{\circ}]=2$ and so there is a single $G^\circ$-orbit splitting
into two $G(q)$-orbits each of size $(1/2)|G^\circ(q)|/q^2$.   

It follows by counting that the dimension of the complement of the dense orbit
in  $C \times D$ is $24$ and moreover, there is at most $1$ orbit
of that dimension. Let $H$ be the centralizer of a $(c,d) \in C \times D$
with product $zu=uz$ where $u \in G_2(a_1)$.  The centralizer $H$ of $zu$
in $G^\circ$ has connected component a $4$-dimensional unipotent group
with $H/H^{\circ} \cong \fS_3$ (see Table~\ref{tab:G2}).  We see that over
$\FF_2$ this orbit breaks
up into three orbits of sizes  $e|G^\circ(2)|/2^4$ withe $e=1/2,1/3$ and $1/6$.
It follows that $H$ is the centralizer of $\langle c, d\rangle$ and this orbit
breaks up into $3$ orbits over $\FF_q$ of sizes 
$e|G^\circ(q)|/q^4$ with $e=1/2,1/3$ and $1/6$.  Thus, the number
of $\FF_q$ points in this orbit of the algebraic group is $|G^\circ(q)|/q^4$.

Since $|C(q)|D(q)|=|G^\circ(q)|/q^4 + |G^\circ(q)|/q^2$, it follows that there
are no further orbits and so $G^\circ$ has exactly $2$ orbits on
$C \times D$. 

The last assertion follows by noting that in the examples we produce
$(c,d) \in C \times D$ in the dense orbit which normalize a Borel subgroup.
Since normalizing a Borel subgroup is a closed condition (as $G/B$
is a projective variety), this implies the result. 
\end{proof}

We next consider commutators of $G^\circ$-orbits in our situation. Here, we
have the following curious situation:

\begin{exmp}   \label{exmp:commD4}
As above let $x,y\in G=D_4(k).\fS_3$ be quasi-central in distinct cosets of
order~2 modulo $H:=G^\circ$. Then for any $g\in H$ we have
$$[x,yg] = x (yg)^{-1} x yg  =  x g^{-1}(yxy)g \in x^H t^H =(xt)^H$$
where $t = yxy$ is a quasi-central element (reflection) in the third such
coset. So letting $C = x^H$ and $D=yH$, we see that $[C,D]$ is a single
$H$-orbit (namely the graph automorphisms of order~3 with centralizer $G_2(k)$).
In particular, $G$ certainly need not have finitely many orbits on
$C \times D$. Hence, the analogue of Theorem~\ref{thm:mainlong}(vii) fails in
this situation.
\end{exmp}

\begin{prop}   \label{prop:commD4}
 Let $G=D_4(k).\fS_3$ and $C,D$ two $G^\circ$-orbits in distinct cosets of
 $G^\circ$ of order~2. Then $[C,D]$ is the union of finitely many
 $G^\circ$-orbits if and only if (up to order) $C$ consists of reflections.
\end{prop}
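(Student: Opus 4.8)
The plan is to prove both directions by reducing, as in the proofs of Theorem~\ref{thm:commut} and Proposition~\ref{prop:D4.S3}, to explicit computations inside a suitable reductive subgroup normalized by $x$ and $y$. For the ``if'' direction: if $C$ consists of reflections (i.e., $C=x^{G^\circ}$ with $x$ as in the paragraph preceding Example~\ref{exmp:commD4}), then for any $d\in D$ write $d=yg$ for $g\in G^\circ$ and compute, exactly as in Example~\ref{exmp:commD4},
$$[x,yg]=x(yg)^{-1}x(yg)=xg^{-1}(yxy)g\in x^{G^\circ}(yxy)^{G^\circ}\subseteq CD',$$
where $D'=(yxy)^{G^\circ}$ is a single $G^\circ$-class of reflections in the third coset of order~2, and $yxy$ is quasi-central. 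Hence $[C,D]\subseteq C D'$, and since $C$ and $D'$ both consist of reflections (hence satisfy case~(3)(k) of Theorem~\ref{thm:mainB}, equivalently Example~\ref{exmp:D4.S3c}), $CD'$ is a finite union of $G^\circ$-classes by Theorem~\ref{thm:mainlong} applied to the pair $(C,D')$. So $[C,D]$ is a finite union of $G^\circ$-classes. (In fact, as noted in the example, if $D$ itself consists of reflections then $[C,D]$ is a single class.)

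For the ``only if'' direction, suppose neither $C$ nor $D$ consists of reflections; I want to produce infinitely many classes in $[C,D]$. First, by passing to closures (as in the proof of Theorem~\ref{thm:commut}), $[C,D]$ being a finite union of classes is preserved, and I may replace $C,D$ by classes in their closures; in particular I may assume each of $C,D$ is either semisimple (quasi-semisimple, since they are outer) or unipotent, and if unipotent then with unipotent part as small as possible — a long root element in the appropriate coset, i.e.\ the quasi-central unipotent class is reached only if it \emph{is} a reflection, which is excluded. Now choose representatives $x'\in C$, $y'\in D$ normalizing a common parabolic $P=QL$ with $L=TA_1(k)^3$ as in the proof of Proposition~\ref{prop:D4.S3}, where the involution in the coset of $x'$ swaps the first two $A_1$-factors and the involution in the coset of $y'$ swaps the last two. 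Since $C,D$ are quasi-semisimple after passing to closures (or have a long root element as unipotent part), I can arrange $x',y'$ to normalize $L$ itself, and then the commutator $[C,D]$ restricted to this subgroup contains $[C\cap L,D\cap L]$, which I claim hits infinitely many classes of $L$ (hence of $L.\fS_3$, hence $[C,D]$ hits infinitely many $G^\circ$-classes by the argument of Proposition~\ref{prop:hit}). The verification that $[C\cap L,D\cap L]$ is infinite is a direct computation in $TA_1(k)^3$ with the two swapping involutions, entirely analogous to the use of Lemma~\ref{lem:wreath}: since $C$ is not the reflection class, the image of $x'$ in $L$ is either non-central semisimple in at least one swapped pair of factors, or a nontrivial unipotent in one, and likewise for $y'$; in either case one reads off a one-parameter family of non-conjugate commutators. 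The one case not covered by this reduction to $L$ is when $p=2$ and both $C,D$ consist of $2$-elements: here every unipotent class of each coset is represented in $\Aut(D_4(2))$, so the claim becomes a finite check with GAP that the only pairs of non-reflection $2$-element classes with $[C,D]$ consisting of $2$-elements do not exist — equivalently, any such pair yields a commutator of odd order.

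The main obstacle I expect is the bookkeeping in the last paragraph: cleanly enumerating which pairs of quasi-semisimple (or root-element-unipotent) outer classes in the two cosets actually reduce, via a $\langle x,y\rangle$-stable parabolic, to a nontrivial configuration in $TA_1(k)^3$, and making sure the degenerate subcases (e.g.\ when the semisimple part of $x'$ is already a reflection, which is exactly the excluded case) are correctly separated out. This is essentially the same combinatorial case division that appears in Proposition~\ref{prop:D4.S3} and in the proof of Theorem~\ref{thm:commut} for disconnected $G$, so it should go through by the same methods; the $p=2$ pure-$2$-element subcase is handled by the same GAP computation in $\Aut(D_4(2))$ already invoked there.
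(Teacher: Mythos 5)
Your overall strategy coincides with the paper's: the ``if'' direction is exactly the computation of Example~\ref{exmp:commD4} (note that the product you then invoke is that of the two reflection classes in distinct cosets, i.e.\ Example~\ref{exmp:D4.S3} / case (3)(k) of Theorem~\ref{thm:mainB}, not Example~\ref{exmp:D4.S3c}), and the ``only if'' direction proceeds, as in the paper, by passing to closures to reach minimal non-reflection classes (quasi-semisimple, or reflection times commuting long root element) and then computing commutators in a small reductive subgroup normalized by both graph involutions. You work in the Levi $TA_1(k)^3$ of the $\fS_3$-stable parabolic, the paper works in the subsystem subgroup $A_1(k)^4$; that difference is cosmetic (both are the standard reductions used in Section~\ref{sec:D4.S3}), though you should phrase the reduction via $N_G(L)$ rather than ``$C\cap L$'', which is empty since $C$ is outer.

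The one step that fails as written is your separate treatment of the case $p=2$ with both $C,D$ classes of $2$-elements. Since $C$ and $D$ lie in the two distinct cosets of order~$2$, every commutator $[c,d]$ lies in a coset of order~$3$; in characteristic~$2$ that coset contains no $2$-elements at all (every element there has order divisible by~$3$), so the proposed GAP criterion ``$[C,D]$ consists of $2$-elements'' is vacuous, and ``some commutator has even order'' does not imply infinitely many classes either: finiteness of $[C,D]$ is equivalent to the quasi-semisimple parts of the commutators forming a single class, and a finite union of classes $zu$ with fixed semisimple part $z$ of order~$3$ and varying unipotent $u$ is perfectly consistent with that. Indeed your test would ``rule out'' the genuine example where $C$ consists of reflections, since there $[C,D]=z^{G^\circ}$ with $z$ of order~$3$. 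The correct finite check would compare quasi-semisimple parts of commutators; better still, the subcase needs no separate treatment: the closure of an outer $2$-element class stays in its coset, and the minimal non-reflection class there is $2^3.1^2$, i.e.\ a reflection times a commuting long root element, which is exactly your case of ``smallest unipotent part'' and is disposed of by the same subgroup computation --- this is how the paper's proof handles it, with no appeal to GAP.
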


\begin{proof}
We have seen in Example~\ref{exmp:commD4} that if $C$ consists
of reflections and $D$ is any class, then $[C,D] = z^{G^\circ}$ where $z$ is
a graph automorphism of order~3 with centralizer $G_2(k)$.
Thus, by taking closures we may assume that one of the following holds:\\
(1)  $C$ and $D$ are both quasi-semisimple;\\
(2)  $C$ and $D$ are both reflections times  long root elements; or\\
(3)  $C$ is quasi-semisimple and $D$ is as in (2).

We can see all these cases in $A_1^4$ and get a contradiction
(assuming that neither $C$ nor $D$ consists of reflections).
\end{proof}

Here is another corollary (which could be proved more directly):

\begin{cor}   \label{cor:triple}
 Let $G$ be an almost simple algebraic group. Suppose that $C_1,C_2,C_3$ are
 nontrivial $G^\circ$-orbits. Then $C_1C_2C_3$ consists of infinitely many
 $G^\circ$-orbits unless (up to order) $C_1$ and $C_2$ are reflections in
 different cosets modulo $G^\circ$ and $C_3$ consists of long root elements.
\end{cor}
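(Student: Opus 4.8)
The plan is to reduce this to the classification of finite-product pairs in Theorem~\ref{thm:mainB} by deleting factors one at a time. First I would pass to closures: multiplication being a morphism, $\overline{C_1}\,\overline{C_2}\,\overline{C_3}\subseteq\overline{C_1C_2C_3}$, so if $C_1C_2C_3$ is a finite union of $G^\circ$-orbits (equivalently, its closure is) then so is $C_1'C_2'C_3'$ for any orbits $C_i'\subseteq\overline{C_i}$. In particular a factor lying in $G^\circ$ may be replaced by $\{1\}$, hence simply deleted. Thus: if all three $C_i\subseteq G^\circ$ then all three pairwise products $C_iC_j$ are finite unions of orbits; if exactly one $C_i$, say $C_3$, is outer then $C_1C_3$ and $C_2C_3$ are; and if two of them, say $C_1,C_2$, are outer, then $C_1C_2$ is.

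If all $C_i\subseteq G^\circ$, so $G=G^\circ$ is connected, then each pair $(C_i,C_j)$ occurs in Theorem~\ref{thm:mainB}(1)--(2) (i.e.\ in Theorem~\ref{thm:mainA}(1) or in cases (2)(a)--(d)). In each such pair one of the two classes is a rigidly determined ``small'' class --- long or short root elements, a reflection or pseudoreflection class, or a prescribed involution class --- the partner being then determined up to type, and no such small class has a finite product with itself; a short bookkeeping argument then shows that three nontrivial classes cannot all pairwise have finite products. So this case does not occur.

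Now suppose two of the $C_i$, say $C_1,C_2$, are outer. If $C_3\subseteq G^\circ$, then $C_1C_2$ is a finite union of orbits, so by Theorem~\ref{thm:mainB}(3)(k) and Proposition~\ref{prop:D4.S3} necessarily $G=D_4(k).\fS_3$ with $C_1,C_2$ in distinct cosets of order $2$ and, after reordering, $C_1$ a reflection class. If $C_2$ were not also a reflection class, then by Lemma~\ref{lem:D4.S3} the dense orbit $E$ of $C_1C_2$ would lie in a coset of order $3$ and have dimension $>14$; but by Theorem~\ref{thm:mainB}/Theorem~\ref{thm:mainA} the only class in such a coset having a finite product with a nontrivial inner class is the $14$-dimensional quasi-central one, so $(E,C_3)$ --- hence $C_1C_2C_3\supseteq EC_3$ --- would be infinite. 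Therefore $C_2$ is a reflection class too, $C_1C_2=z^{G^\circ}$ is the single class of graph automorphisms of order $3$ with centralizer $G_2(k)$, and $C_1C_2C_3=z^{G^\circ}C_3$ is a finite union of orbits exactly when $(z^{G^\circ},C_3)$ is, i.e.\ --- by Theorem~\ref{thm:mainB}(3)(j) for $p\ne3$ and Theorem~\ref{thm:mainA}(2)(f) for $p=3$ --- exactly when $C_3$ consists of long root elements. This is the asserted exception, and it genuinely occurs (Examples~\ref{exmp:D4.S3a}, \ref{exmp:D4andE6} and~\ref{exmp:D4}). If instead all three of $C_1,C_2,C_3$ are outer, a similar but easier analysis --- reducing, via closures, to a product of three quasi-central classes, which by Theorem~\ref{thm:mainB}(3)(k) can only be finite in $D_4(k).\fS_3$, where one checks that $z^{G^\circ}$ times a reflection class is never a finite-product pair --- shows no further example arises.

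There remains the case that exactly one $C_i$, say $C_3$, is outer while $C_1,C_2\subseteq G^\circ$. Then $C_1C_3$ and $C_2C_3$ are finite unions of orbits, so by Theorem~\ref{thm:mainB}(3)(e)--(j) (or Theorem~\ref{thm:mainA}(2)(c)--(f)) the group $G^\circ$ is one of $A_{2m-1}(k)$, $D_n(k)$, $E_6(k)$, $D_4(k)$, the class $C_3$ is the corresponding quasi-semisimple graph-automorphism class, and $C_1,C_2$ lie in the short list of its inner partners; in particular, since none of these $G^\circ$ occurs in the connected classification, $C_1C_2$ is infinite. The task --- and this is the main obstacle, since the outer factor $C_3$ cannot simply be deleted --- is to deduce that $C_1C_2C_3$ is still infinite. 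My plan is to reduce to a small-rank subgroup in which all three classes are represented and where a direct computation exhibits infinitely many semisimple parts in the product, exactly as in the proof of Theorem~\ref{thm:mainB}; in the cases where $C_1,C_2$ consist of root elements this can be made uniform by replacing $C_3$ with the quasi-central class $z^{G^\circ}$ in its closure and working inside the reductive centralizer $H=C_{G^\circ}(z)$, using that by Proposition~\ref{prop:rel} each of $C_1,C_2$ meets $H$ in a root-element class, that $C_1C_2C_3\supseteq z\,(C_1\cap H)(C_2\cap H)$ with all right-hand factors commuting with $z$, and that the product of two root-element classes in $H$ meets infinitely many semisimple classes (together with Lemma~\ref{lem:quasi-semisimple} to see that multiplying these by the fixed quasi-semisimple $z$ still produces infinitely many $G^\circ$-classes). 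This would complete the proof.
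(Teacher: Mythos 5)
Your opening reduction is where the argument breaks: you claim that a factor lying in $G^\circ$ ``may be replaced by $\{1\}$, hence simply deleted'', but $1$ lies in the closure of a conjugacy class only when that class is unipotent (up to a central shift). For a noncentral semisimple or mixed class $C_k$, every element of $\overline{C_k}$ has semisimple part conjugate to that of $C_k$, so $1\notin\overline{C_k}$ and no such deletion is possible. Since your whole case division rests on deducing pairwise finiteness of the products $C_iC_j$ from finiteness of $C_1C_2C_3$ by deleting the remaining inner factor, the skeleton collapses whenever one of the classes is not unipotent, which is the generic situation. What is actually needed (and what the paper uses) is the principle that if some sub-product $C_iC_j$ meets infinitely many $G^\circ$-classes then so does $C_iC_jC_k$; the paper obtains this by producing infinitely many (quasi-)semisimple parts inside the normalizer of a maximal torus and multiplying by a representative of $C_k$ normalizing that torus. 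You never establish such a principle, and the closure trick is not a substitute for it.

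Second, even granting your reductions, the hardest case --- exactly one outer factor, where $C_1C_2$ is automatically an infinite union of classes because $G^\circ$ is simply laced --- is left as an explicit plan rather than a proof: the proposed reduction to a small-rank subgroup, and the argument inside $H=C_{G^\circ}(z)$ for root-element classes, are not carried out; moreover Proposition~\ref{prop:rel} does not say that $C_1,C_2$ meet $H$ in root-element classes, it only transfers finite-product pairs involving the class of $z$ itself. For comparison, the paper's proof avoids the pairwise-finiteness detour entirely: it verifies the exception via $C_1C_2=z^{G^\circ}$ and Theorem~\ref{thm:mainB}; outside $D_4(k).\fS_3$ it observes that two of the classes are both inner or both outer, so some $C_iC_j$ is already infinite by Theorem~\ref{thm:mainB}, and concludes by the torus argument; and in the connected case it reduces by closures to root elements of distinct lengths in a rank-two subgroup, where the product contains a regular unipotent class, which pairs finitely with no nontrivial class. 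Your $D_4(k).\fS_3$ analysis (the dense class of dimension greater than $14$ in the order-three coset, followed by Theorem~\ref{thm:mainB}(3)(j)) is correct as far as it goes, but it does not repair the two gaps above.
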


\begin{proof}
Observe that for $C_1$ and $C_2$ reflections in different cosets and $C_3$
long root elements, $C_1C_2 = D:=z^{G^\circ}$ and so indeed $C_1C_2C_3 = DC_3$
is a finite union of classes by Theorem~\ref{thm:mainB}. \par
In any case other than $D_4(k).\fS_3$ with an outer automorphism,
two of the $C_i$ must either be inner or outer and so already $C_iC_j$ consists
of infinitely many inner classes again by Theorem~\ref{thm:mainB} (and by
working in the normalizer of a torus, infinitely many classes times $C_k$ gives
infinitely many classes).

If $G$ is connected, by passing to closures, we may assume that each $C_i$ is
either semisimple or unipotent.  We know that two semisimple elements do not
work and so at least two of the classes must be unipotent. By taking closures,
they can be taken to be root elements (and so for roots of distinct lengths).
So we reduce to the rank~2 case, i.e. $B_2(k)$ (characteristic~2) or $G_2(k)$
(characteristic~3) by \cite{GMT}. But then the product of the two unipotent
classes contains a regular unipotent element and that is not one of our
examples.
\end{proof}

\section{Infinite Fields}   \label{sec:compact}

Let's also note the following easy consequence of our results in \cite{GMT}:

\begin{cor}
 If $G$ is a simple compact (real) Lie group and $C$ and $D$ are non-central
 conjugacy classes of $G$, then $CD$ is an infinite union of classes.
\end{cor}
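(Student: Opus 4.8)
The plan is to reduce the statement about a compact Lie group $G$ to the corresponding result for its complexification, a connected simple algebraic group over $\mathbb{C}$, where we have already classified all pairs of classes whose product meets only finitely many classes. Let $G_{\mathbb{C}}$ denote the complexification of $G$; it is a connected simple algebraic group over $\mathbb{C}$, and $G$ is a maximal compact subgroup, Zariski-dense in $G_{\mathbb{C}}$. If $C,D$ are noncentral conjugacy classes of $G$, pick $x\in C$, $y\in D$; these are noncentral elements of $G_{\mathbb{C}}$ as well, since $Z(G)=Z(G_{\mathbb{C}})\cap G$ and a $G$-central element is central in the Zariski closure. So the $G_{\mathbb{C}}$-classes $C'=x^{G_{\mathbb{C}}}$ and $D'=y^{G_{\mathbb{C}}}$ are noncentral.

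First I would invoke \cite[Thm.~1.1]{GMT} (or Theorem~\ref{thm:mainB} in the connected case) together with \cite[Cor.~5.14]{GMT}: for a \emph{connected} simple algebraic group, $C'D'$ is a finite union of classes only if the group is of non-simply laced type and the pair $(C',D')$ appears on the explicit list, in which case every pair from $C'\times D'$ lies in a common Borel subgroup. However, a compact simple Lie group $G$ has complexification of the \emph{same} Dynkin type, and — crucially — in all the exceptional cases of that list, one of the two classes is unipotent (see Theorem~\ref{thm:mainB}(2)). A nontrivial unipotent element of $G_{\mathbb{C}}$ has infinite order, hence cannot lie in the compact group $G$, whose elements are all semisimple of finite-or-bounded order (every element of a compact Lie group lies in a torus). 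Therefore none of the finite-product configurations can be realised with both $x,y\in G$, and $C'D'$ must be an infinite union of $G_{\mathbb{C}}$-classes.

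It then remains to descend this infiniteness from $G_{\mathbb{C}}$ back to $G$. The key point is that two elements of $G$ are conjugate in $G_{\mathbb{C}}$ if and only if they are conjugate in $G$ — this is the standard fact that for a compact group with complexification $G_{\mathbb{C}}$, $G$-conjugacy and $G_{\mathbb{C}}$-conjugacy agree on $G$ (elements of $G$ lie in maximal tori, and the Weyl groups coincide). Consequently the map from $G$-classes in $CD$ to $G_{\mathbb{C}}$-classes in $C'D'$ is injective. Since $CD$ surjects (in the obvious sense) onto the semisimple part of $C'D'\cap G$, and since the closure of $C'D'$ already contains infinitely many distinct semisimple classes (an infinite family of distinct characteristic polynomials, realised on a maximal torus of $G$ — one can run the same wreath-product argument of Lemma~\ref{lem:wreath} or the torus arguments in the proof of Theorem~\ref{thm:mainlong} entirely inside the compact torus), we conclude that $CD$ itself meets infinitely many $G$-classes.

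The main obstacle is the descent step: making precise that an infinite family of $G_{\mathbb{C}}$-classes in $C'D'$ actually has representatives in $G$, i.e. that $C'D'\cap G$ is still ``large''. The cleanest way around this is not to transport an abstract infinite family but to exhibit it directly: choose a maximal torus $S$ of $G$, write noncentral $x,y$ so that, up to conjugacy, suitable powers or commutators with $S$ produce an infinite subset of $S$ inside $CD$ (using that the product of two noncentral classes of $\mathrm{SU}_2$, i.e. the compact form of $A_1$, already hits infinitely many classes, and embedding an $A_1$ appropriately via a root subgroup). This reduces everything to an explicit rank-one computation inside the compact group and avoids any delicate comparison of class spaces; it is essentially the compact-group shadow of the arguments already used for algebraically closed fields, and I expect it to go through with only routine modifications.
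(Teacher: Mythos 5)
Your first step is fine and is the same use of the classification as in the paper: elements of a compact group are semisimple, so the complexified classes $C'=x^{G_{\mathbb C}}$, $D'=y^{G_{\mathbb C}}$ are noncentral semisimple, and by \cite[Thm.~1.1]{GMT} their product cannot be a finite union of $G_{\mathbb C}$-classes. The genuine gap is the descent step, and it is not a technicality you can wave away: knowing that $C'D'$ meets infinitely many $G_{\mathbb C}$-classes does not by itself say anything about $CD$, which is only a thin (real) subset of $C'D'$. Injectivity of the map from $G$-classes in $CD$ to $G_{\mathbb C}$-classes in $C'D'$ gives no lower bound on the number of $G$-classes met; and the assertions you lean on --- that ``$CD$ surjects onto the semisimple part of $C'D'\cap G$'' and that the infinitely many semisimple classes in the closure of $C'D'$ are ``realised on a maximal torus of $G$'' --- are unjustified: a point of $\overline{C'D'}$, or even of $C'D'\cap G$, need not be writable as $cd$ with $c\in C$ and $d\in D$ both in the compact group. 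Your fallback route (an explicit $\mathrm{SU}_2$/maximal-torus computation inside $G$) is only a plan, not an argument, so as it stands the proof is incomplete.

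What you are missing is that the implication you need is exactly the contrapositive of an easy Zariski-closure statement, which is how the paper argues. Since $G$ is Zariski dense in $G_{\mathbb C}$, the orbit map shows $\overline{C}=\overline{C'}$ and $\overline{D}=\overline{D'}$, hence $C'D'\subseteq \overline{C}\,\overline{D}\subseteq\overline{CD}$. If $CD$ were a finite union of $G$-classes $E_1,\dots,E_n$, then $\overline{CD}=\bigcup_i\overline{E_i}=\bigcup_i\overline{e_i^{\,G_{\mathbb C}}}$ would be a finite union of closures of single $G_{\mathbb C}$-classes, hence a finite union of $G_{\mathbb C}$-classes; so $C'D'$ would meet only finitely many $G_{\mathbb C}$-classes, contradicting the first (correct) half of your argument. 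This one-line closure step replaces both your conjugacy comparison and the proposed rank-one computation, and it only uses the finiteness hypothesis in the easy direction (from the compact group up to its complexification), which is why no ``descent'' problem arises in the paper's proof.
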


\begin{proof}
It follows easily from the fact  that $G$ is Zariski dense in $\bar G$, the
complexification of $G$, that any class is dense in its closure $\bar C$
in $\bar G$. Now if $CD$ is a
finite union of classes, then taking closures shows that $\bar C \bar D$ is
a finite union of $\bar G$-classes as well, and as $C$ and $D$ are
semisimple classes, by \cite[Thm.~1.1]{GMT} this cannot happen.
\end{proof}

For the disconnected case, we would get only the semisimple cases
(e.g.~in type $A$). The previous result extends to infinite fields
with $G(K)$ an anisotropic simple group (i.e., containing no nontrivial
unipotent elements).   

The same ideas give the following result.  

\begin{cor}
 Let $K$ be an infinite field and $G(K)$ some form of a simple
 algebraic group $G$ over $K$. Let $C\subset G(K)$ be a non-central conjugacy
 class. Then $CC$ is an infinite union of classes.
\end{cor}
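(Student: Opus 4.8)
The plan is to reduce the statement over an arbitrary infinite field $K$ to the already-proven result over the algebraic closure, exploiting the fact that a conjugacy class of an infinite group is Zariski dense in its closure in the ambient algebraic group. Concretely, let $\bar G$ be the simple algebraic group over $\bar K$ obtained by base change, and let $\bar C$ denote the Zariski closure in $\bar G$ of the image of $C$. First I would argue that $C$ is Zariski dense in $\bar C$ and that $\bar C$ is a union of finitely many $\bar G$-conjugacy classes: this uses that $G(K)$ is Zariski dense in $\bar G$ (true for any infinite field and any form, since $G$ is connected and $K$-unirational) together with the fact that the closure of a single $\bar G$-class is a finite union of $\bar G$-classes. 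Since $C$ is noncentral, at least one $\bar G$-class meeting $\bar C$ is noncentral.

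Next I would pass to products. Because $\bar C$ contains $C$ as a dense subset, $\bar C\,\bar C$ is contained in the Zariski closure of $CC$, and conversely $CC\subseteq\bar C\,\bar C$; hence if $CC$ were a \emph{finite} union of $G(K)$-conjugacy classes, then taking closures would force $\overline{CC}$ — and with it $\bar C\,\bar C$ — to be a finite union of $\bar G$-conjugacy classes. (One must be slightly careful: a finite union of $G(K)$-classes need not have Zariski-closed union of closures equal to a finite union of $\bar G$-classes \emph{in general}, but each $G(K)$-class $x^{G(K)}$ is dense in $x^{\bar G}$, so its closure is a finite union of $\bar G$-classes, and a finite union of such is still a finite union of $\bar G$-classes.) Thus finiteness of $CC$ over $K$ implies finiteness of $\bar C\,\bar C$ over $\bar K$ for the pair of noncentral classes $\bar C,\bar C$.

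Finally I would invoke the already-established classification. If $p=0$ or more generally if $\bar C$ meets only semisimple classes, Theorem~\ref{thm:mainlong} (equivalently \cite[Thm.~1.1]{GMT}) applies to rule out $\bar C\,\bar C$ being finite, since $[C,C]$ finite is excluded by Theorem~\ref{thm:mainC}, and the product being finite would force the commutator to be finite as well via condition~(vii) of Theorem~\ref{thm:mainlong} (using $G\neq D_4.\fS_3$ here, which is automatic as $G$ is simple and connected). Alternatively and more directly: Theorem~\ref{thm:mainC} states that for a noncentral $G^\circ$-class $C$ in an almost simple algebraic group over an algebraically closed field, $[C,C]$ is an infinite union of classes; and by the equivalence of (i) and (vii) in Theorem~\ref{thm:mainlong}, $CC$ finite would imply $[C,C]$ finite — contradiction. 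So $\bar C\,\bar C$ is infinite, hence $CC$ is infinite.

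The main obstacle I expect is the bookkeeping in the descent step: one has to be sure that Zariski density of $G(K)$ in $\bar G$ holds for \emph{every} $K$-form (not just the split one) and to handle forms where $G(K)$ might be anisotropic — there unipotent elements are absent, so one should note that $C$ is then necessarily semisimple and the semisimple case of \cite[Thm.~1.1]{GMT} suffices, exactly as in the compact-Lie-group corollary above. A secondary subtlety is making precise that ``$CC$ is a finite union of $G(K)$-classes'' transfers to ``$\bar C\,\bar C$ is a finite union of $\bar G$-classes'': this is where one uses that each $G(K)$-class is dense in its $\bar G$-closure, so the closure relation is well-behaved, but it should be spelled out rather than asserted.
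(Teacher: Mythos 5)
Your proposal is correct and takes essentially the same route as the paper: reduce to the algebraically closed case via Zariski density of $G(K)$ in $G$ (so that $C$ is dense in $\bar C$ and closures of $G(K)$-classes are finite unions of $\bar G$-classes), conclude that finiteness of $CC$ would force $\bar C\,\bar C$ to meet only finitely many $\bar G$-classes, and then invoke the classification over the algebraic closure. The only cosmetic difference is the endgame: the paper cites \cite[Thm.~1.1]{GMT} directly (no example there has $C=D$), while you argue via Theorem~\ref{thm:mainC} together with the equivalence of (i) and (vii) in Theorem~\ref{thm:mainlong}, which is equally valid.
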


\begin{proof}
Let $\bar C$ denote the Zariski closure of $C$. Then the semisimple part
of elements in $\bar C$ is unique, since in any rational faithful
representation, elements in $\bar C$ have the same characteristic polynomial
as those in $C$, as $G(K)$ is Zariski dense in $G$ (see
e.g.~\cite[Cor.~13.3.9]{Spr}). Thus, if $CC$ consists of finitely many classes,
so does $\bar C\bar C$, which is not possible by \cite[Thm.~1.1]{GMT}.
\end{proof}


\end{document}